\title{A new definition of rough paths on manifolds}
\date{}
   \author{Youness Boutaib}
   \thanks{The first author carried most of this research while at the University of Oxford. The same author is thankful for the support of the DFG through the research unit FOR2402 in Berlin and Potsdam where important improvements on this work have been made.}
   \address{RWTH Aachen University. Chair for Mathematics of Information Processing. Pontdriesch 10, 52062 Aachen. Germany.}
   \email{boutaib.youness@gmail.com}
 	\author{Terry Lyons}
   \address{Mathematical Institute. University of Oxford. Andrew Wiles Building. Radcliffe Observatory Quarter, Woodstock Road. Oxford OX2 6GG. United Kingdom.}
   \email{Terry.Lyons@maths.ox.ac.uk}
\newtheorem{theo}{Theorem}[section]
\newtheorem{Cor}[theo]{Corollary}
\newtheorem{Def}[theo]{Definition}
\newtheorem{example}[theo]{Example}
\newtheorem{examples}[theo]{Examples}
\newtheorem{lemma}[theo]{Lemma}
\newtheorem{notation}[theo]{Notation}
\newtheorem{Prop}[theo]{Proposition}
\newtheorem{Rem}[theo]{Remark}
\begin{document}
\maketitle
	\begin{abstract} Smooth manifolds are not the suitable context for trying to generalize the concept of rough paths on a manifold. Indeed, when one is working with smooth maps instead of Lipschitz maps and trying to solve a rough differential equation, one loses the quantitative estimates controlling the convergence of the Picard sequence. Moreover, even with a definition of rough paths in smooth manifolds, ordinary and rough differential equations can only be solved locally in such case. In this paper, we first recall the foundations of the Lipschitz geometry, introduced in \cite{CLLy}, along with the main findings that encompass the classical theory of rough paths in Banach spaces. Then we give what we believe to be a minimal framework for defining rough paths on a manifold that is both less rigid than the classical one and emphasized on the local behaviour of rough paths. We end by explaining how this same idea can be used to define any notion of coloured paths on a manifold.
	\end{abstract}
	\section{Introduction} The theory of rough paths (Lyons, \cite{Lyons}) and its variations (e.g. Gubinelli, \cite{Gubinelli}) generalise Young's integration theory in a way that it separates the probabilistic and deterministic parts of the strongly probabilistic It\^o calculus, using only the variation of paths as a way of measuring their smoothness. A crowning achievement of the theory is understanding that, as far as ordinary differential equations are concerned, a path should not be defined by its graph but rather be identified as a choice of its signature (that is, the sequence of its iterated integrals). For example, the signature of a Brownian motion could be calculated using either It\^o's or Stratonovich's calculus and it is this choice that leads one to solve a stochastic differential equation driven by a Brownian motion either in the sense of It\^o's calculus or Stratonovich's. More generally, the theory of rough paths provides us with a deterministic calculus constructed in a way that it does not depend intrinsically on how a signature is defined but rather on common algebraic (that can be summerized by a Hopf algebra structure) and analytical properties that all signatures are expected to satisfy. These works have, in particular, enriched the toolbox of stochastic analysis with deterministic -path by path- results and widened its scope to rougher paths than the Brownian motion. The underlying philosophy of the theory also opened the door for solving a certain class of Stochastic Partial Differential Equations that require making sense of classically ill-defined products of distributions. This was carried incrementally through the development of alternative rough path theories: branched rough paths \cite{Gubinelli2}, para-controlled calculus \cite{GIP} and regularity structures \cite{Hairer} (see \cite{FH} for a succinct exposition that goes from the theory of rough paths to that of regularity structures.) In addition to the natural applications that come with stochastic analyis, the theory of rough paths highlighted the role of signatures as highly efficient transforms of paths \cite{BGLY,HL}, which led to their exploitation in recent works in machine learning (the literature on the subject being abundant, we cite \cite{AGGLK,CNO,Graham} as varied use-case examples).
	
	While the need for such calculus on manifolds arises naturally from both the points of view of pure and applied mathematics (see for example the introductions to \cite{CLLy, Lee} for the motivation), the literature on rough paths on manifolds is still very limited compared to its counterpart in the Euclidean setting or even to that of stochastic analysis on manifolds. The main attempts to generalise these notions to manifolds are due to Cass, Litterer and Lyons in \cite{CLLy} in the framework of what is called a Lipschitz manifold and Driver and Semko in \cite{DS} (for paths controlled by rough paths on Riemannian manifolds). We also refer to Cass, Driver and Litterer in \cite{CDL} (for weakly geometric rough paths on submanifolds embedded in the Euclidean space) and Bailleul \cite{Bailleul}. As in \cite{CLLy}, we aim to avoid to put too much structure on the manifold we work on or to exactly mimic the construction of rough paths on the Euclidean space in the non-linear framework of a manifold. In order to show how one can simply translate the notion of rough paths (and in general that of \emph{coloured paths}) to manifolds, we use below the language of category theory to introduce local Lipschitz manifolds and explain how this can be done without any further particular considerations of the class of manifolds one is working on (for example, one does not need the manifold to be Riemannian to be able to measure the smoothness of paths).		
	\section{Review of key elements in the theory of the rough paths}
	We start by defining rough paths and giving a reminder of all the notions that will be necessary to us in the rest of this work,  one of which will be the extension of the notion of Lipschitz (H\"older) maps. Most of the results recalled in this section can be easily found, for example, in \cite{CLL}, \cite{Lyons} and \cite{LyQ}. We mostly give some examples and prove some of the statements to familiarize the reader with any possible new notions. In particular, we will attach a certain importance to associating a starting point or a trace to geometric rough paths.
	\subsection{Signatures of paths}
	\subsubsection{The concept of the $p$-variation}\label{sec:PVarConcept}
	We introduce the notion of $p$-variation, crucial in both Stieltjes's and Young's integration theories and the rough path theory. It is a way of measuring the amplitude of the oscillations of a path, independently of when said oscillations occur.
    \begin{Def}
    Let $p\geq1$, $(E,\|.\|)$ be a normed vector space and $T \geq 0$. Let $x: [0,T]\to E$ be a continuous path. For a finite subdivision $D=(t_i)_{0\leq i\leq n}$ of $[0,T]$, we denote by $\|x\|_{p,D}$ the quantity:
    \[ \|x\|_{p,D}:=\left(\sum_{i=0}^{n-1}\|x_{t_{i+1}}-x_{t_i}\|^p\right)^{\frac{1}{p}}=\left(\sum_{D}\|x_{t_{i+1}}-x_{t_i}\|^p\right)^{\frac{1}{p}}\]
    $x$ is said to have a finite $p$-variation over $[0,T]$ if $\{\|x\|_{p,D}|D\in\mathcal{D}_{[0,T]}\}$ has a finite supremum. In this case, this supremum is called the $p$-variation of $x$ over $[0,T]$ and is denoted by $\|x\|_{p,[0,T]}$. When $p=1$, we say that the path $x$ has bounded variation.
    \end{Def}  
    \begin{Rem}
    The previous definition can easily be adapted for a metric space $(E,d)$ but we will not use this generalisation in the rest of these notes.
    \end{Rem}
    \begin{examples} Let $T\geq 0$: \begin{enumerate}
	\item Let $E$ be a normed vector space. A continuously differentiable $E$-valued path $x$ over $[0,T]$ is of bounded variation over $[0,T]$ and $\|x\|_{1,[0,T]}\leq T\|\dot{x}\|_{\infty,[0,T]}$.
	\item Let $E$ be a normed vector space and $0<\alpha\leq1$. An $\alpha$-H\"older $E$-valued path over $[0,T]$ has finite $1/\alpha$-variation over $[0,T]$.
	\item Let $(\Omega, \mathcal{A}, \mathbb{P})$ be a probability space and $B$ be a Brownian motion over $[0,T]$: \begin{itemize}
	\item $B$ has finite $p$-variation over $[0,T]$ almost surely, for any $p>2$.
	\item Seen as an $L^2(\Omega)$-valued path, $B$ has finite $2$-variation.
	\end{itemize}
	\end{enumerate}    \end{examples}
	 \begin{Prop} \cite{CLL,CG}
    Let $p\geq1$ and $(E,\|.\|)$ be a Banach space.
    \begin{itemize}
      \item The set $\mathcal{V}^p([0,T],E)$ of all continuous paths from $[0,T]$ to $E$ that have a finite $p$-variation over $[0,T]$ is a vector space when endowed with the natural operations of addition and multiplication by a scalar.
      \item The map:
        \begin{displaymath}
        \begin{array}{crcl}
          \|.\|_{\mathcal{V}^p([0,T],E)}:&\mathcal{V}^p([0,T],E)&\to&\mathbb{R}^+\\
           &x&\mapsto&\|x\|_{p,[0,T]}+\sup\limits_{t\in [0,T]}\|x_t\|\\
         \end{array}
        \end{displaymath}
      defines a norm on $\mathcal{V}^p([0,T],E)$ called the $p$-variation norm.
      \item $(\mathcal{V}^p([0,T],E),\|.\|_{\mathcal{V}^p([0,T],E)})$ is a Banach space.
      \item $\forall q\geq p\geq 1: \mathcal{V}^p([0,T],E) \subseteq \mathcal{V}^q([0,T],E)$.
    \end{itemize}
    \end{Prop}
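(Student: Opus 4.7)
The plan is to handle the four items in the order stated, noticing that items (1), (2) and (4) all reduce to the elementary $\ell^p$ inequality $\bigl(\sum|a_i|^q\bigr)^{1/q}\leq\bigl(\sum|a_i|^p\bigr)^{1/p}$ for finite sequences and $q\geq p$ (proved by rescaling so the $\ell^p$-norm equals $1$, which forces each $|a_i|\leq 1$), while item (3) will require genuine work.

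For each fixed subdivision $D$, Minkowski's inequality on $\mathbb{R}^{|D|}$ tells us that $x\mapsto\|x\|_{p,D}$ is a seminorm on continuous $E$-valued paths. Taking the supremum over $D\in\mathcal{D}_{[0,T]}$ preserves subadditivity and absolute homogeneity, and the paths for which this supremum is finite form a vector subspace; this yields (1), and combined with the supremum norm (which separates constants) it yields (2). For (4), the inequality above applied at each fixed $D$ gives $\|x\|_{q,D}\leq\|x\|_{p,D}\leq\|x\|_{p,[0,T]}$, and taking the supremum over $D$ then yields $\|x\|_{q,[0,T]}\leq\|x\|_{p,[0,T]}$, which in particular forces the inclusion $\mathcal{V}^p\subseteq\mathcal{V}^q$.

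The heart of the proof is completeness. Given $(x^n)$ Cauchy in $\mathcal{V}^p$, the bound $\sup_t\|x^n_t-x^m_t\|\leq\|x^n-x^m\|_{\mathcal{V}^p([0,T],E)}$ shows that $(x^n)$ is uniformly Cauchy, hence converges uniformly to a continuous path $x:[0,T]\to E$. I would then establish the lower semi-continuity of $\|\cdot\|_{p,[0,T]}$ along pointwise convergence: for any fixed $n$ and any fixed subdivision $D$, the finiteness of the sum defining $\|x^n-x^m\|_{p,D}$ allows us to pass to the limit $m\to\infty$ term by term, giving
\[
\|x^n-x\|_{p,D}\;=\;\lim_{m\to\infty}\|x^n-x^m\|_{p,D}\;\leq\;\liminf_{m\to\infty}\|x^n-x^m\|_{p,[0,T]}.
\]
The right-hand side is independent of $D$, so the supremum over $D$ can be taken on the left to produce $\|x^n-x\|_{p,[0,T]}\leq\liminf_m\|x^n-x^m\|_{p,[0,T]}$. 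The Cauchy property then forces $\|x^n-x\|_{p,[0,T]}\to0$ as $n\to\infty$, showing simultaneously that $x-x^n\in\mathcal{V}^p$ (hence $x\in\mathcal{V}^p$) and that $x^n\to x$ in the $p$-variation norm.

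The main obstacle I anticipate is precisely this interchange of the limit in $m$ with the supremum over $D$: one must take the termwise limit \emph{before} the supremum, since the supremum of the distances $\|x^n-x^m\|_{p,[0,T]}$ over the two indices cannot be directly controlled. Using lower semi-continuity (rather than continuity) is what makes the argument go through and avoids having to fix a single maximizing subdivision, which in general does not exist.
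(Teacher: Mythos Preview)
Your argument is correct and is the standard route: Minkowski for the seminorm properties, the $\ell^p$--$\ell^q$ monotonicity for the inclusion, and lower semi-continuity of $\|\cdot\|_{p,[0,T]}$ under pointwise limits for completeness. The paper itself does not prove this proposition; as announced at the start of Section~1, results of this kind are ``admitted without proofs'' and referred to \cite{Flour} and \cite{qian}, so there is no paper-side argument to compare against.
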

	The manipulation of $p$-variations is often made easier by the introduction of controls. For a compact interval $J$, we will denote by $\Delta_{J}$ the simplex of all pairs $(s,t)\in J^2$ such that $s\leq t$.
    \begin{Def}
    A function $\omega:\Delta_{[0,T]} \to \mathbb{R}_+$ is said to be a control if it has the following properties:
    \begin{itemize}
      \item $\omega$ is continuous.
      \item $\omega$ is super-additive i.e. $\omega(s,u)+ \omega(u,t) \leq \omega(s,t) \quad ,\forall \, 0\leq s\leq u \leq t\leq T$.
    \end{itemize}
    \end{Def}
    Lemma \ref{NatControl} shows that to every path of finite $p$-variation, we can associate a ``natural" control.
    \begin{lemma}\label{NatControl} \cite{CLL,CG,LyQ}
    Let $p\geq1$, $E$ be a normed vector space and $T\geq 0$. Let $x: [0,T]\to E$ be a continuous path of finite $p$-variation over $[0,T]$. Then the function $\omega$ defined on $\Delta_{[0,T]}$ by $\omega (s,t)=\|x\|_{p,[s,t]}^p$ is a control.
    \end{lemma}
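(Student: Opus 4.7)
The plan is to verify, in turn, the three defining properties of a control, with the lion's share of the work lying in continuity.

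\textbf{$\omega(t,t)=0$ and super-additivity.} The diagonal condition is immediate. For super-additivity, given $s\leq u\leq t$ and any subdivisions $D_{1}$ of $[s,u]$ and $D_{2}$ of $[u,t]$, their concatenation $D_{1}\cup D_{2}$ is a subdivision of $[s,t]$ with $\|x\|_{p,D_{1}}^{p}+\|x\|_{p,D_{2}}^{p}=\|x\|_{p,D_{1}\cup D_{2}}^{p}\leq\omega(s,t)$; taking suprema in $D_{1}$ and $D_{2}$ independently yields $\omega(s,u)+\omega(u,t)\leq\omega(s,t)$. These, together with $\omega\geq 0$, give the monotonicity $[s,t]\subseteq[s',t']\Rightarrow\omega(s,t)\leq\omega(s',t')$, and $\omega$ is lower semi-continuous as the pointwise supremum, over finite $D\subset[0,T]$, of the jointly continuous functionals $(s,t)\mapsto\|x\|_{p,D}^{p}$. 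The remaining step is upper semi-continuity, which I handle in two stages.

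\textbf{Stage 1 (diagonal continuity).} For each $\tau\in[0,T]$ the non-increasing limit $L:=\lim_{t\downarrow\tau}\omega(\tau,t)$ exists and is $\geq 0$. Suppose for contradiction $L>0$; pick $\eta>0$ with $\eta^{p}<L/4$ and, by continuity of $x$ at $\tau$, $\delta_{0}>0$ with $\|x_{u}-x_{\tau}\|<\eta$ on $[\tau,\tau+\delta_{0}]$. For any $t_{1}\in(\tau,\tau+\delta_{0}]$, a near-optimal subdivision of $[\tau,t_{1}]$ has $p$-sum $\geq L/2$; discarding the first increment (of $p$-weight $\leq\eta^{p}<L/4$) yields a subdivision of $[u_{1},t_{1}]$ with $p$-sum $\geq L/4$, so $\omega(u_{1},t_{1})\geq L/4$ for some $u_{1}\in(\tau,t_{1})$. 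Since $\omega(\tau,u_{1})\geq L$, iterating inside $[\tau,u_{1}]$ produces a strictly decreasing sequence $t_{1}>u_{1}>u_{2}>\cdots>\tau$ with $\omega(u_{i+1},u_{i})\geq L/4$ for every $i$. Super-additivity then forces $NL/4\leq\omega(\tau,t_{1})\leq\omega(0,T)<\infty$, absurd as $N\to\infty$. Left limits are handled symmetrically.

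\textbf{Stage 2 (joint upper semi-continuity).} For $(s_{n},t_{n})\to(s_{0},t_{0})$ with $s_{0}<t_{0}$, monotonicity gives $\omega(s_{n},t_{n})\leq\omega(s_{n}\wedge s_{0},t_{n}\vee t_{0})$, so it suffices to show the right-hand side converges to $\omega(s_{0},t_{0})$. Picking a near-optimal subdivision $D_{n}$ of $[s_{n}\wedge s_{0},t_{n}\vee t_{0}]$ and refining to $\tilde D_{n}:=D_{n}\cup\{s_{0},t_{0}\}$, the decomposition over $[s_{n}\wedge s_{0},s_{0}]\cup[s_{0},t_{0}]\cup[t_{0},t_{n}\vee t_{0}]$ gives $\|x\|_{p,\tilde D_{n}}^{p}\leq\omega(s_{n}\wedge s_{0},s_{0})+\omega(s_{0},t_{0})+\omega(t_{0},t_{n}\vee t_{0})$, whose outer terms vanish by Stage 1. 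The refinement defect at an insertion of $s_{0}$ between $v_{j}<v_{j+1}$ in $D_{n}$ is bounded via the elementary inequality $(a+b)^{p}-a^{p}-b^{p}\leq p(a+b)^{p-1}\min(a,b)$, with $a:=\|x_{s_{0}}-x_{v_{j}}\|$ and $b:=\|x_{v_{j+1}}-x_{s_{0}}\|$, together with the key observation that $v_{j}\in[s_{n}\wedge s_{0},s_{0}]$---an interval whose length tends to $0$---so uniform continuity of $x$ on $[0,T]$ forces $a\to 0$ uniformly in $D_{n}$; the defect at $t_{0}$ is handled symmetrically. The main obstacle is Stage 1, the unique step combining finiteness of the total $p$-variation, continuity of $x$, and super-additivity of $\omega$; Stage 2, though technically intricate, amounts to bookkeeping once Stage 1 is in hand.
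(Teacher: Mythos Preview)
The paper does not actually supply a proof of this lemma: it is one of the results explicitly ``admitted without proofs'' in the review section, with the reader referred to \cite{Flour} and \cite{qian}. So there is no in-paper argument to compare against, and your write-up must stand on its own.

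Your argument is essentially sound. Super-additivity and the diagonal condition are handled cleanly, and the lower semi-continuity claim is correct (though the phrasing ``supremum over finite $D\subset[0,T]$ of the jointly continuous functionals'' is slightly informal---you should make explicit that for a fixed subdivision $D$ of $[s,t]$ one perturbs the endpoints to obtain a subdivision of $[s_n,t_n]$ whose $p$-sum converges to that of $D$). Stage~1 is a nice contradiction argument and genuinely uses finiteness of $\omega(0,T)$, continuity of $x$, and super-additivity together; Stage~2's refinement-defect estimate via $(a+b)^p-a^p-b^p\leq p(a+b)^{p-1}\min(a,b)$ and uniform continuity of $x$ is the right idea.

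There is one small gap. Stage~1 establishes only the \emph{one-sided} limits $\omega(\tau,t)\to 0$ as $t\downarrow\tau$ and $\omega(s,\tau)\to 0$ as $s\uparrow\tau$; Stage~2 is written under the hypothesis $s_0<t_0$. Neither stage, as stated, covers general sequences $(s_n,t_n)\to(\tau,\tau)$ with $s_n<\tau<t_n$, and super-additivity alone does not bound $\omega(\tau-\delta,\tau+\delta)$ by $\omega(\tau-\delta,\tau)+\omega(\tau,\tau+\delta)$. The fix is immediate: your Stage~2 refinement argument works verbatim when $s_0=t_0=\tau$ (insert the single point $\tau$ into the near-optimal partition; the defect still vanishes because the whole interval shrinks), yielding $\omega(s_n\wedge\tau,t_n\vee\tau)\leq\omega(s_n\wedge\tau,\tau)+\omega(\tau,t_n\vee\tau)+o(1)\to 0$. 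You should either drop the restriction $s_0<t_0$ in Stage~2 or add one sentence explaining this case.
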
  
	Conversely, if we can find a control that bounds from above the $p$\textsuperscript{th} power of the increments of a continuous path then this path is necessarily of finite $p$-variation. This will be the content of theorem \ref{Controlled}. Consequently, it also gives a criterion for the finiteness of the $p$-variation of a path without having to go back to the definition.
    \begin{theo}\label{Controlled} \cite{CLL,CG,LyQ}
    Let $p\geq1$, $E$ be a normed vector space and $T\geq 0$. Let $x: [0,T]\to E$ be a continuous path. There exists a control $\omega$ defined over $\Delta_{[0,T]}$ such that for every $(s,t) \in \Delta_{[0,T]}$ we have: $\|x_t-x_s\|^p\leq \omega (s,t)$ if and only if $x$ has a finite $p$-variation. In this case, and for such a control $\omega$, we have: 
	\[\forall (s,t) \in \Delta_{[0,T]}: \, \|x\|_{p,[s,t]}^p\leq \omega (s,t)\]
	We say in this case that the $p$-variation of $x$ is controlled by $\omega$.
    \end{theo}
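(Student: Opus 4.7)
The statement is an equivalence, so the plan is to handle each direction separately; both are short and essentially unwind the definitions.

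For the ``only if'' direction, I would suppose that $x$ has finite $p$-variation and simply exhibit a control that does the job: the natural control provided by Lemma \ref{NatControl}, namely $\omega(s,t) := \|x\|_{p,[s,t]}^p$. By that lemma, $\omega$ is a genuine control. The bound $\|x_t-x_s\|^p \leq \omega(s,t)$ then follows by evaluating the $p$-variation over $[s,t]$ on the trivial subdivision $\{s,t\}$, which already contributes $\|x_t - x_s\|^p$ to the sum defining $\|x\|_{p,[s,t]}^p$.

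For the ``if'' direction, I would fix a control $\omega$ with $\|x_t-x_s\|^p \leq \omega(s,t)$ on all of $\Delta_{[0,T]}$, pick any $(s,t)\in\Delta_{[0,T]}$ and any finite subdivision $D=(t_i)_{0\leq i\leq n}$ of $[s,t]$, and estimate termwise:
\[
\sum_{i=0}^{n-1}\|x_{t_{i+1}}-x_{t_i}\|^p \;\leq\; \sum_{i=0}^{n-1}\omega(t_i,t_{i+1}).
\]
The key step is then a finite induction on $n$ using the super-additivity $\omega(a,b)+\omega(b,c)\leq \omega(a,c)$, which yields $\sum_{i=0}^{n-1}\omega(t_i,t_{i+1})\leq \omega(s,t)$. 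Taking the supremum over all subdivisions $D$ of $[s,t]$ gives both the finiteness of $\|x\|_{p,[s,t]}^p$ and the stronger pointwise bound $\|x\|_{p,[s,t]}^p \leq \omega(s,t)$ claimed in the statement.

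Continuity of $x$ plays no role in the inequalities themselves (it is already embedded in the hypothesis so that $p$-variation and the natural control are well-defined), and continuity of $\omega$ is not used here either: the entire backward direction rests only on super-additivity. Consequently, I don't foresee a genuine obstacle; the proof is essentially a two-line verification in each direction, and the main thing to be careful about is to state the super-additivity induction cleanly and to remember to take the supremum over $D$ at the end to recover the uniform estimate on $\|x\|_{p,[s,t]}^p$.
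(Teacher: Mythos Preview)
Your proposal is correct and follows exactly the same approach as the paper: one direction uses super-additivity of $\omega$ to pass from the increment bound to $\|x\|_{p,D}^p \leq \omega(s,t)$ and then takes the supremum over subdivisions, while the other direction invokes Lemma~\ref{NatControl} to produce the natural control $\omega(s,t)=\|x\|_{p,[s,t]}^p$. The only discrepancy is cosmetic: your ``if''/``only if'' labels are swapped relative to the paper's $(\Rightarrow)/(\Leftarrow)$, but the mathematical content of each direction is identical.
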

	\subsubsection{The signature of a path}\label{sec:signatures}
	We start by recalling the definition of the tensor algebra of a vector space (see \cite{Reutenauer} for an exhaustive exposition).
	\begin{Def}
    Let $E$ be a vector space. For every $n\in \mathbb{N}^*$, let $E^{\otimes n}$ be the space of homogeneous tensors of $E$ of degree $n$. We use the convention: $E^{\otimes 0}=\mathbb{R}$. The set of formal series of tensors of $E$, denoted by $T((E))$ is defined by the following:
    \[ T((E)):=\{\mathbf{a}=(a_n)_{n\in \mathbb{N}}|\forall n\in \mathbb{N}: \quad a_n \in E^{\otimes n}\}\]
    $T((E))$ has an algebra structure when endowed with the operations defined by the following: for $\mathbf{a}=(a_n)_{n\in \mathbb{N}}$ and $\mathbf{b}=(b_n)_{n\in \mathbb{N}}$ in $T((E))$ and $\lambda \in \mathbb{R}$:
    \begin{description}
      \item [(Addition)] $\mathbf{a}+\mathbf{b}=(a_n+b_n)_{n \in \mathbb{N}}$,
      \item [(Multiplication)] $\mathbf{a} \otimes \mathbf{b}=\left(\sum\limits_{k=0}^{n} a_k \otimes b_{n-k}\right)_{n \in \mathbb{N}}$,
      \item [(Multiplication by a scalar)] $\lambda . \mathbf{a}=(\lambda a_n)_{n \in \mathbb{N}}$.
    \end{description}
    \end{Def}
    The invertible elements of the tensor algebra can be easily identified:
    \begin{Prop}\label{InvertibleTensors}\cite{CLL}
    Let $E$ be a real vector space. $(T((E)),+,.,\otimes)$ is a non-commutative (assuming dim($E$)$ \geq 2$) unital algebra with unit $\mathbf{1}=(1,0,0,\ldots)$. An element $\mathbf{a}=(a_n)_{n\in \mathbb{N}}$ in $T((E))$ is invertible if and only if $a_0 \neq 0$. In this case, its inverse is given by the well-defined series:
    \[\mathbf{a}^{-1}=\frac{1}{a_0}\sum_{n \geq 0}\left(\mathbf{1}-\frac{\mathbf{a}}{a_0}\right)^n\]
    \end{Prop}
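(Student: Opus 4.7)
The plan is to split the proposition into the algebra structure and the invertibility characterization, treating each componentwise in the degree grading of $T((E))$.

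For the algebra structure, all axioms reduce to routine verifications on each $E^{\otimes n}$. The abelian-group and scalar-multiplication axioms are immediate from the componentwise definitions. Bilinearity of $\otimes$ on $T((E))$ is inherited from that of the tensor product in each degree. Associativity follows from the identity
\[ \bigl((\mathbf{a}\otimes\mathbf{b})\otimes\mathbf{c}\bigr)_n \;=\; \sum_{i+j+k=n} a_i\otimes b_j\otimes c_k \;=\; \bigl(\mathbf{a}\otimes(\mathbf{b}\otimes\mathbf{c})\bigr)_n, \]
which in turn follows from associativity of the underlying tensor product. The unit relation $\mathbf{1}\otimes \mathbf{a}=\mathbf{a}=\mathbf{a}\otimes \mathbf{1}$ is clear since only the $k=0$ term of the convolution survives. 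Non-commutativity when $\dim E\geq 2$ is witnessed by any two linearly independent vectors $u,v\in E$, which, viewed as degree-$1$ elements of $T((E))$, satisfy $u\otimes v\neq v\otimes u$ in $E^{\otimes 2}$.

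For the invertibility statement, the necessary direction is immediate: if $\mathbf{a}\otimes\mathbf{b}=\mathbf{1}$ then the degree-$0$ equation reads $a_0 b_0 = 1$, forcing $a_0\neq 0$. For sufficiency, assume $a_0\neq 0$ and set $\mathbf{c}:=\mathbf{1}-\frac{1}{a_0}\mathbf{a}$, so that $c_0=0$. The key lemma to extract is that for every $n\geq 1$ and every $k<n$, the degree-$k$ component of $\mathbf{c}^n$ vanishes: indeed, $(\mathbf{c}^n)_k=\sum_{k_1+\cdots+k_n=k} c_{k_1}\otimes\cdots\otimes c_{k_n}$, and when $n>k$ every such tuple contains at least one index $k_i=0$, killing the term. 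Consequently, for each fixed $k$ the formal sum $\sum_{n\geq 0}\mathbf{c}^n$ has only finitely many (at most $k+1$) nonzero contributions in degree $k$, so $\mathbf{b}:=\frac{1}{a_0}\sum_{n\geq 0}\mathbf{c}^n$ is a well-defined element of $T((E))$.

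It remains to check $\mathbf{a}\otimes\mathbf{b}=\mathbf{b}\otimes\mathbf{a}=\mathbf{1}$. Rewriting $\frac{1}{a_0}\mathbf{a}=\mathbf{1}-\mathbf{c}$, the vanishing lemma lets us truncate the series at $n=k$ in degree $k$, reducing the verification to the finite telescoping identity
\[ (\mathbf{1}-\mathbf{c})\otimes\sum_{n=0}^{k}\mathbf{c}^n \;=\; \mathbf{1}-\mathbf{c}^{k+1}, \]
whose degree-$k$ component is $\mathbf{1}_k$; the symmetric computation handles the other side. The main subtlety is precisely this truncation step: the formal-series manipulation becomes rigorous only because of the vanishing of low-degree components of $\mathbf{c}^n$ for large $n$, which converts an a priori infinite sum into a finite combinatorial identity in every fixed degree.
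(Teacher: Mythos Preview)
Your proof is correct and complete. The paper does not actually supply a proof for this proposition: it is one of the results that the authors explicitly admit without proof, referring the reader to standard references such as \cite{Flour} and \cite{qian}. Your argument is the standard one---componentwise verification of the algebra axioms, necessity of $a_0\neq 0$ from the degree-$0$ equation, well-definedness of the geometric series via the vanishing of low-degree components of powers of a nilpotent-in-degree-zero element, and a telescoping check of the inverse relation---and there is nothing to compare it against in the paper itself.
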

    \begin{notation}
    \[\widetilde{T}((E))=\{\mathbf{a}=(a_n)_{n\in \mathbb{N}} \in T((E))| \quad a_0=1\}\]
    \end{notation}
	We now proceed to the definition of the signature of a path. This is given by the sequence of its iterated integrals and is a well studied subject since \cite{Chen}:
	\begin{Def}
	Let $E$ be a Banach space and $T \geq 0$. Let $x:[0,T]\to E$ be a path of bounded variation. For $(s,t) \in \Delta _{[0,T]}$, we define the following sequence by induction (well-defined by Stieltjes' integration theory):
    \[ \left \{ \begin{array}{ll}
      S^0(x)_{(s,t)}=1& \\
      S^n(x)_{(s,t)}=\int_{[s,t]}S^{n-1}(x)_{(s,u)} \otimes \mathrm{d}x_u & ,\forall n \in \mathbb{N}^* \\
      \end{array} \right.
    \]
	For every pair $(s,t) \in \Delta _{[0,T]}$, the sequence $(S^n(x)_{(s,t)})_{n\in \mathbb{N}}$, simply denoted $S(x)_{(s,t)}$, is called the signature of $x$ over $[s,t]$. For $N\in \mathbb{N}^*$, $(S^n(x)_{(s,t)})_{n\leq N}$, simply denoted $S_{N}(x)_{(s,t)}$, is called the truncated signature of $x$ over $[s,t]$ of degree $N$.
    \end{Def}
	The label ``signature'', implicitely implying the full characterization of a path, can be justified at many levels:
	\begin{itemize}
	\item Lyons and Hambly in \cite{HL} show that the signature of a path of bounded variation fully and uniquely characterizes the path in question up to a tree-like equivalence. This result has been extended later in more general contexts. Let us cite in particular the work of Le Jan and Qian \cite{LQ} for sample paths of the Brownian motion and that of Boedihardjo, Geng, Lyons and Yang \cite{BGLY} for (weakly) geometric rough paths (that we will introduce later).
	\item In the context of differential equations, the signature of the control signal is the only needed information to get the solution. This can be shown for example easily and explicitely in the case where the vector fields in the differential equation are linear and continuous (details in \cite{CLL} for example).
	\end{itemize} 
	\begin{lemma}\cite{CLL}
	Let $E$ be a vector space. Let $m \in \mathbb{N}$ be an integer and define \[B_m=\{\mathbf{a}=(a_n)_{n\in \mathbb{N}}|\forall i \in \{0,\ldots,m\} \quad a_i=0\}\] Then is $B_m$ an ideal of $T((E))$.
	\end{lemma}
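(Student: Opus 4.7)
The plan is to verify the two defining properties of a two-sided ideal directly from the componentwise definitions of addition and the tensor convolution product. First I would observe that $B_m$ is plainly a linear subspace of $T((E))$: if $\mathbf{a}, \mathbf{b} \in B_m$ and $\lambda \in \mathbb{R}$, then the components of $\mathbf{a}+\mathbf{b}$ and $\lambda.\mathbf{a}$ in degrees $0, \ldots, m$ are sums and scalar multiples of zeros, hence zero. In particular $B_m$ is an additive subgroup containing $\mathbf{0}$.

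The substantive part is absorption under $\otimes$ on both sides. Fix $\mathbf{a} = (a_n)_{n \in \mathbb{N}} \in B_m$ (so $a_i = 0$ for all $i \leq m$) and an arbitrary $\mathbf{b} = (b_n)_{n \in \mathbb{N}} \in T((E))$. For any $n \leq m$, the $n$-th component of $\mathbf{a} \otimes \mathbf{b}$ is
\[
(\mathbf{a} \otimes \mathbf{b})_n = \sum_{k=0}^{n} a_k \otimes b_{n-k},
\]
and every index $k$ appearing in this sum satisfies $k \leq n \leq m$, so $a_k = 0$ and the entire sum vanishes. Symmetrically, for $\mathbf{b} \otimes \mathbf{a}$ and $n \leq m$,
\[
(\mathbf{b} \otimes \mathbf{a})_n = \sum_{k=0}^{n} b_k \otimes a_{n-k},
\]
and here $n-k \leq n \leq m$ forces $a_{n-k} = 0$, so again the sum is zero. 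Thus both $\mathbf{a} \otimes \mathbf{b}$ and $\mathbf{b} \otimes \mathbf{a}$ belong to $B_m$, which establishes that $B_m$ is a two-sided ideal.

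There is no genuine obstacle here; the only subtlety worth flagging is that the argument relies on the fact that the $n$-th component of the tensor product is a \emph{finite} sum whose indices are bounded by $n$, so for $n \leq m$ no component of degree larger than $m$ is ever mixed in. The non-commutativity of $T((E))$ (for $\dim E \geq 2$) is precisely why both inclusions $B_m \otimes T((E)) \subseteq B_m$ and $T((E)) \otimes B_m \subseteq B_m$ need to be checked separately, but both reduce to the same index-chasing observation.
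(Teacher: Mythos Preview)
Your proof is correct: the verification that $B_m$ is a linear subspace and absorbs the tensor product on both sides is exactly the right argument, and the index-chasing observation that every term in $(\mathbf{a}\otimes\mathbf{b})_n$ for $n\leq m$ involves some $a_k$ with $k\leq m$ is the crux. The paper itself does not give a proof of this lemma; it is one of the results the authors explicitly admit without proof as ``very basic'' in the introductory section, so there is no alternative approach to compare against.
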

	\begin{Def}\cite{CLL}
	Let $E$ be a vector space. Let $m \geq 0$ be an integer. The truncated tensor algebra of order $m$ of $E$, denoted by $T^{(m)}(E)$, is the quotient algebra $T((E))/B_m$. We will denote the canonical homomorphism $T((E))\to T((E))/B_m$ by $\pi_m$. There is a natural identification between $T^{(m)}(E)$ and $\bigoplus\limits_{0\leq i\leq m} E^{\otimes i}$.
	\end{Def}
	\begin{Def}[Action of the Symmetric Group on Tensors]\cite{Reutenauer}
	Let $n \in \mathbb{N}^*$, $\sigma \in \mathcal{S}_n$ and $E$ be a vector space. We define the action of $\sigma$ on the homogenous tensors of $E$ of order $n$ as a linear map by the following: 
	\[\forall x_1,x_2,\ldots, x_n \in E \quad \sigma(x_1\otimes x_2\otimes \cdots \otimes x_n)=x_{\sigma(1)}\otimes x_{\sigma(2)}\otimes \cdots \otimes x_{\sigma(n)}\]
	\end{Def}
    \begin{Def}
    Let $E$ be a vector space.
    A norm $\|.\|$ on $T((E))$ is said to be admissible if the two following properties hold:
    \begin{enumerate}
      \item $\forall n \in \mathbb{N}^*,\,\forall \sigma \in \mathcal{S}_n,\, \forall x \in E^{\otimes n} \quad \| \sigma x\|=\|x\|$.
      \item $\forall n,m \in \mathbb{N}^*,\, \forall x \in E^{\otimes n},\, \forall y \in E^{\otimes m} \quad \| x \otimes y\|\leq\|x\| \|y\|$.
    \end{enumerate}
    \end{Def}
	 We will assume in the rest of this paper that $(E^{\otimes n})_{n\geq 1}$ are endowed with admissible norms. A discussion on certain basic properties of norms on tensor product spaces can be found for example in \cite{Boutaib}.
	\subsubsection{Some algebraic and analytic properties of signatures}
	We resume the notations of the previous subsection. Let $S$ (respectively $S_n$, for $n\in\mathbb{N}^*$) denote the map giving the signature (resp. the truncated signature of degree $n$) over $[0,T]$ of $E$-valued paths defined on $[0,T]$ that have bounded variation. We give below three basic properties of signatures that will be key in defining rough paths in the next subsection. For a vector space $E$, we identify linear forms on $T((E))$ with the elements of the vector space $T(E^*)$ (the tensor polynomials of $E^*$, $E^*$ being the space of linear forms on $E$). We start by what we will be calling the shuffle product property:
	\begin{lemma}\label{Shuffle}\cite{CLL}
	Let $E$ be a Banach space. For every two linear forms $e$ and $f$ defined on $T((E))$, there exists a linear form denoted by $e\shuffle f$ such that:
	\[\forall x \in \mathcal{V}^1([0,T],E) \quad e(S(x)).f(S(x))=(e\shuffle f)(S(x))\]
	$e\shuffle f$ is called the shuffle product of $e$ and $f$.
	\end{lemma}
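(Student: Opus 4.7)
The plan is to reduce the statement by bilinearity to the case of two \emph{monomial} linear forms, and then to prove the identity for monomials by induction using integration by parts. Concretely, identifying linear forms on $T((E))$ with elements of $T(E^*)$, bilinearity in $e$ and $f$ allows one to assume $e = e_1^* \otimes \cdots \otimes e_n^*$ is concentrated in degree $n$ (with each $e_i^* \in E^*$) and $f = f_1^* \otimes \cdots \otimes f_m^*$ is concentrated in degree $m$. For such monomials, $e(S(x))$ equals the iterated Stieltjes integral
\[ \int_{0 < u_1 < \cdots < u_n < T} e_1^*(dx_{u_1}) \cdots e_n^*(dx_{u_n}), \]
and likewise for $f(S(x))$.

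I would then define $e \shuffle f := \sum_{\sigma \in \mathrm{Sh}(n,m)} \sigma \cdot (e \otimes f)$, where $\mathrm{Sh}(n,m) \subset \mathcal{S}_{n+m}$ is the set of $(n,m)$-shuffles (permutations preserving the relative order of the first $n$ and of the last $m$ positions), acting on $(E^*)^{\otimes(n+m)}$ via the symmetric-group action already introduced. An equivalent recursive definition is $u \shuffle v = (u \shuffle v^-) \otimes v_m + (u^- \shuffle v) \otimes u_n$, where $w^-$ denotes $w$ with its last factor removed; this recursion is precisely what will align the combinatorics with the analysis. Extending bilinearly gives a well-defined shuffle on all of $T(E^*)$.

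The identity $e(S(x)) \cdot f(S(x)) = (e \shuffle f)(S(x))$ on monomials then follows by induction on $n+m$. The base case $n = 0$ or $m = 0$ is immediate since $S^0(x) = 1$. For the inductive step, write $I_u(t)$ and $I_v(t)$ for the iterated integrals associated with $e$ and $f$ run up to time $t$; both are continuous and of bounded variation, with $dI_u(t) = I_{u^-}(t) \, e_n^*(dx_t)$ and similarly for $I_v$. The integration-by-parts formula for Stieltjes integrals of continuous BV paths yields
\[ I_u(T) I_v(T) = \int_0^T I_u(t) \, dI_v(t) + \int_0^T I_v(t) \, dI_u(t); \]
applying the inductive hypothesis to the products $I_u \cdot I_{v^-}$ and $I_{u^-} \cdot I_v$ appearing inside each integral, and then re-integrating against $f_m^*(dx_t)$ and $e_n^*(dx_t)$ respectively, reproduces exactly the recursive identity for $u \shuffle v$ written above.

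The main difficulty is not analytic — the product rule for continuous BV paths is elementary and is already covered by Stieltjes's theory — but a matter of combinatorial bookkeeping: one has to verify that the recursion produced by integration by parts agrees with the permutation-based definition of $\shuffle$, and that the bilinear extension to $T(E^*)$ is consistent and basis-independent. Once these checks are made the induction closes immediately, and the resulting $e \shuffle f$ is the desired linear form.
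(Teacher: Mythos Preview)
The paper does not give a proof of this lemma: it is one of the results in Section~1 that are explicitly ``admitted without proofs'' and referred to \cite{Flour} and \cite{qian}. Your argument is the standard one found in those references --- reduce by bilinearity to monomial linear forms in $T(E^*)$, define the shuffle on monomials via $(n,m)$-shuffles (or equivalently via the recursion $u\shuffle v=(u\shuffle v^-)\otimes v_m+(u^-\shuffle v)\otimes u_n$), and prove the identity by induction on $n+m$ using the Stieltjes integration-by-parts formula for continuous bounded-variation paths. The analytic and combinatorial steps you outline are correct, and the bookkeeping issues you flag (matching the recursion from integration by parts with the permutation-based definition, and well-definedness of the bilinear extension) are the only points requiring care; they are routine. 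So your proposal is a correct proof, and since the paper supplies none of its own there is nothing further to compare.
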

   \begin{Def}
    Let  $t \in [0,T]$ and $x: [0,t] \to E$ and $y: [t,T]\to E$ be any two paths. We call the concatenation of $x$ and $y$ the path denoted $x*y$ defined over $[0,T]$ by:
    \[\left \{ \begin{array}{ll}
    (x*y)_u=x_u&, \textrm{if} \quad u \in [0,t]\\
    (x*y)_u=y_u-y_t+x_t&, \textrm{if} \quad u \in [t,T]\\
    \end{array} \right.
    \]
    We define in a similar way the concatenation of any two paths defined on two adjacent segments.
   \end{Def}
	\begin{Rem}
	The concatenation operation is associative.
	\end{Rem}
	The second property that is of interest to us is what we call the multiplicativity property. The following theorem is due to Chen \cite{Chen2}. For a proof, see also \cite{CLL}:
   \begin{theo}\label{Chen1}
   Let  $t \in [0,T]$. Consider $x \in \mathcal{V}^1([0,t],E)$ and $y \in \mathcal{V}^1([t,T],E)$. Then $x*y\in \mathcal{V}^1([0,T],E)$ and:
	\[S(x*y)_{(0,T)}=S(x)_{(0,t)}\otimes S(y)_{(t,T)}\]
   \end{theo}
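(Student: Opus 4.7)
The plan is to first check that $x*y$ is well-defined, continuous and of bounded variation (trivial, with $\|x*y\|_{1,[0,T]} \leq \|x\|_{1,[0,t]} + \|y\|_{1,[t,T]}$, using that $(x*y)_t = x_t$ on both sides of the junction), and then to prove the identity on signatures level-by-level by induction on $n \in \mathbb{N}$. I would actually establish a stronger statement, namely that for every $s \in [t,T]$ and every $n \in \mathbb{N}$,
\[
S^n(x*y)_{(0,s)} = \sum_{k=0}^{n} S^k(x)_{(0,t)} \otimes S^{n-k}(y)_{(t,s)},
\]
which when evaluated at $s=T$ and re-read as an equality in $T((E))$ is exactly the claimed formula $S(x*y)_{(0,T)} = S(x)_{(0,t)} \otimes S(y)_{(t,T)}$ (via the definition of the tensor product in $T((E))$).

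The base case $n=0$ is immediate since $S^0 \equiv 1$. For the inductive step, I would start from the defining Stieltjes integral
\[
S^n(x*y)_{(0,s)} = \int_{[0,s]} S^{n-1}(x*y)_{(0,u)} \otimes \mathrm{d}(x*y)_u
\]
and split it at $t$ using additivity of Stieltjes integrals. On $[0,t]$, the path $x*y$ coincides pointwise with $x$, so $\mathrm{d}(x*y)_u = \mathrm{d}x_u$ and $S^{n-1}(x*y)_{(0,u)} = S^{n-1}(x)_{(0,u)}$ for every $u \in [0,t]$ (signatures depend only on the restriction of the path to the integration interval); this first piece therefore reproduces $S^n(x)_{(0,t)}$. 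On $[t,s]$, the translate $y_u - y_t + x_t$ has the same increments as $y$, so $\mathrm{d}(x*y)_u = \mathrm{d}y_u$, and I can substitute the inductive hypothesis into $S^{n-1}(x*y)_{(0,u)}$ for $u \in [t,s]$.

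Pulling the (constant in $u$) tensors $S^k(x)_{(0,t)}$ out of the integral and using the defining recursion for $S^{n-k}(y)_{(t,s)}$, the second piece becomes $\sum_{k=0}^{n-1} S^k(x)_{(0,t)} \otimes S^{n-k}(y)_{(t,s)}$. Adding back $S^n(x)_{(0,t)} = S^n(x)_{(0,t)} \otimes S^0(y)_{(t,s)}$ (the $k=n$ term) recovers the full sum and closes the induction.

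The only technical point to watch is the legitimacy of the operations at the junction $u=t$: one must justify the splitting of the Stieltjes integral across $t$ (this is just additivity, since $x*y$ is continuous and of bounded variation on $[0,T]$) and the factoring of the constant tensor $S^k(x)_{(0,t)}$ through an $E^{\otimes(n-k)}$-valued Stieltjes integral against $y$, which follows from the bilinearity and continuity of $\otimes$ together with admissibility of the tensor norms. Beyond these bookkeeping items, no analytic difficulty arises — the result is purely algebraic once the integral has been split at $t$.
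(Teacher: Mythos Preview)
Your induction-on-level argument is correct and is precisely the standard proof of Chen's identity. The paper does not actually supply its own proof of this theorem: it attributes the result to Chen \cite{Chen} and refers the reader to \cite{Flour} for a proof, so there is nothing to compare against beyond noting that the argument you give is the one found in those references. The only cosmetic remark is that your claim ``$S^{n-1}(x*y)_{(0,u)} = S^{n-1}(x)_{(0,u)}$ for $u\in[0,t]$'' is itself an instance of the inductive hypothesis (at level $n-1$, specialised to $s=u\le t$ where all the $y$-signature terms with positive degree vanish), rather than a separate fact about restriction; either way the step is sound.
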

  \begin{Rem}\label{NoConcat}
	Theorem \ref{Chen1} is usually used in the following way (which is equivalent to the previous formulation): for every path $x \in \mathcal{V}^1([0,T],E)$ and  $s,u,t \in [0,T]$ such that $s\leq u \leq t$, we have:
	\[S(x)_{(s,t)}=S(x)_{(s,u)}\otimes S(x)_{(u,t)}\]
   \end{Rem}
	\begin{Rem}\label{SimpleSig}
	For $x \in \mathcal{V}^1([0,T],E)$ and for $(s,t) \in \Delta_{[0,T]}$, $S(x)_{(s,t)} \in \widetilde{T}((E))$ and is therefore invertible (proposition \ref{InvertibleTensors}). Consequently, using theorem \ref{Chen1}: 
\[S(x)_{(s,t)}=(S(x)_{(0,s)})^{-1}\otimes S(x)_{(0,t)}.\]
It is then equivalent to either study the path $u \mapsto S(x)_{(0,u)}$ on the interval $[0,T]$ or $(s,t)\mapsto S(x)_{(s,t)}$ on the simplex $\Delta_{[0,T]}$.
	\end{Rem}
	Finally, the norms of the successive terms in a signature of a path of bounded variation decay in a factorial way with repect to the $1$-variation:
    \begin{theo}\label{DF0}\cite{Lyons2}
      Let $x$ be in $\mathcal{V}^1([0,T],E)$, then:\[
      \forall n \in \mathbb{N}^* \quad \forall (s,t) \in \Delta_{[0,T]} \quad \|S^n(x)_{(s,t)}\|\leq \frac{\|x\|^n_{1,[s,t]}}{n!}. \]
    \end{theo}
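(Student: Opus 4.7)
The plan is to proceed by induction on $n$, exploiting the recursive definition of the signature and the submultiplicativity that is built into the admissibility hypothesis on the tensor norms.

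For the base case $n=1$, the integral defining $S^1(x)_{(s,t)}$ reduces to $x_t - x_s$, so $\|S^1(x)_{(s,t)}\| = \|x_t - x_s\| \leq \|x\|_{1,[s,t]}$, which is exactly the desired bound with $1!=1$ in the denominator.

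For the inductive step, assume the estimate holds at level $n-1$ and all subintervals. Starting from the recursive formula
\[
S^n(x)_{(s,t)} = \int_{[s,t]} S^{n-1}(x)_{(s,u)} \otimes \mathrm{d}x_u,
\]
I would first invoke the standard Stieltjes estimate together with the second admissibility property (submultiplicativity of the norm under $\otimes$) to obtain
\[
\|S^n(x)_{(s,t)}\| \leq \int_{[s,t]} \|S^{n-1}(x)_{(s,u)}\| \, \mathrm{d}\|x\|_{1,[s,u]}.
\]
Applying the inductive hypothesis to the integrand then yields
\[
\|S^n(x)_{(s,t)}\| \leq \frac{1}{(n-1)!}\int_{[s,t]} \|x\|_{1,[s,u]}^{n-1} \, \mathrm{d}\|x\|_{1,[s,u]}.
\]

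The final step is a change of variable using the continuous non-decreasing function $\varphi(u) := \|x\|_{1,[s,u]}$, which is continuous because $x$ has bounded variation and satisfies $\varphi(s)=0$. The right-hand side then becomes $\frac{1}{(n-1)!}\int_0^{\varphi(t)} r^{n-1}\, \mathrm{d}r = \frac{\varphi(t)^n}{n!}$, yielding exactly $\frac{\|x\|_{1,[s,t]}^n}{n!}$. The only nontrivial point is the very first inequality, where one must be careful to combine the Stieltjes bound $\|\int f\, \mathrm{d}x\| \leq \int \|f\|\, \mathrm{d}\|x\|_{1,\cdot}$ with the tensorial submultiplicativity correctly; once this is in place, the remainder is a clean induction driven by the natural control associated with $\|x\|_{1,\cdot}$.
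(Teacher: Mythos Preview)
Your argument is correct and is precisely the standard inductive proof one finds in the references the paper cites. Note that the paper itself does not prove this theorem: it is one of the results in Section~1 that are, in the authors' words, ``admitted without proofs as they are very basic and can be found, for example, in \cite{Flour} and \cite{qian}'', and the proof given there is exactly your induction via the Stieltjes bound, submultiplicativity of the tensor norm, and the change of variable $r=\|x\|_{1,[s,u]}$.
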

	\subsection{Rough Paths}
	The theory of rough paths generalizes the concept of signatures to more irregular paths and provides the tools to solving differential equations driven by these without having to build a whole new theory of integration for each one of them (as in It\^o's calculus). The concept of rough paths finds its source in the signature and the main analytic and algebraic properties that it satisfies. The space of geometric rough paths is indeed simply defined as the completion of that of signatures of paths with bounded variation under a suitably chosen metric similar to the $p$-variation metric for paths introduced in section \ref{sec:PVarConcept}. We introduce here the basic definitions and constructions.
	\subsubsection{Multiplicative functionals}
	The appropriate higher order generalisation of the linearity of the integral for iterated integrals and of Chen's multiplicativity property for signatures is expressed in terms of multiplicative functionals:
	  	\begin{Def}\cite{CLL, Lyons}
	  Let $E$ be a normed vector space and $T\geq 0$. Let $X$ be a map on $\Delta_{[0,T]}$ with values in $T((E))$ (respectively in $T^{(n)}(E)$, with $n \in \mathbb{N}^*$). $X$ is said to be a multiplicative functional (resp. a multiplicative functional of degree $n$) if the following holds:
		\begin{enumerate}
  		\item $X$ is continuous.
  		\item $\forall t \in [0,T] \quad X_{(t,t)}=\mathbf{1}$.
  		\item $X$ is multiplicative: $\forall \,0\leq s \leq u \leq t \leq T \quad X_{(s,t)}=X_{(s,u)} \otimes X_{(u,t)}$.
  		\end{enumerate}
	\end{Def}
	\begin{Rem}
	When no confusion is possible, we may use the term multiplicative functional with no reference to its degree being finite or not.
	\end{Rem}
	\begin{Rem}
	We will use the notation $X^i$ for the component of $X$ of degree $i$.
	\end{Rem}
	It is clear, by Chen's theorem \ref{Chen1}, that, for every path $x \in \mathcal{V}^1([0,T],E)$, $S(x)$ is a multiplicative functional and that for every $n \in \mathbb{N}^*$, $S_n(X)$ is a multiplicative functional of degree $n$.
	\subsubsection{$p$-variation metric and rough paths}
	We generalise now the notion of $p$-variation:
		\begin{Def}\label{VarMF}\cite{CLL, Lyons}
	Let $E$ be a normed vector space and $T\geq 0$. Let $p \geq 1$. Let $X$ be a map on $\Delta_{[0,T]}$ with values in $T((E))$ (respectively in $T^{(n)}(E)$, with $n \in \mathbb{N}^*$) and $\omega$ be a control over $[0,T]$. $X$ is said to have a finite $p$-variation over $[0,T]$ controlled by $\omega$ if:
  	\[\forall i\in \mathbb{N}^* \mathrm{(resp. }\forall i\in [\![1,n]\!]\mathrm{)}, \forall \,0 \leq s \leq t \leq T: \quad \|X^i_{(s,t)}\|\leq \frac{\omega (s,t)^{\frac{i}{p}}}{\beta_p \left( \frac{i}{p} \right)!} \]
  	where we write $x!$ for $\Gamma (x+1)$, with $\Gamma$ being the usual extension of the factorial (the Gamma function) and: 
 	\[\beta_p=p\left(1+\sum_{k=1}^{\infty}\left(\frac{2}{k}\right)^{\frac{[p]+1}{p}}\right)\]
  	If there exists a control such that the previous properties holds, we may say that $X$ has a finite $p$-variation over $[0,T]$ without mentioning the control. We denote by $\mathcal{C}_{0,p}(\Delta_{[0,T]},T^{[p]}(E))$ the set of continuous paths defined over $\Delta_{[0,T]}$ with values in $T^{[p]}(E)$ and that have finite $p$-variation.
	\end{Def}
	We see, that for $n=1$, the concept of $p$-variation in definition \ref{VarMF} is the same as the one introduced in subsection \ref{sec:PVarConcept}. By theorem \ref{DF0}, signatures have finite $1$-variation. The $p$-variation control substitutes then the factorial decay property of signatures.
	\begin{Rem}
	One can also easily note that for $1\leq q\leq p$, a multiplicative functional of finite $q$-variation is of finite $p$-variation.
	\end{Rem}
	Lyons' extension theorem states that a multiplicative functional of finite $p$-variation is uniquely determined by its terms of degree less than or equal to $[p]$:
	\begin{theo}[Extension theorem]\label{ExtThRP}\cite{CLL,Lyons} Let $p\geq1$ and $n \in \mathbb{N}^*\cup\{{\infty}\}$ such that $n\geq [p]$. Let $E$ be a Banach space. Let $X$ be a multiplicative functional of degree $[p]$ in $E$ that has a finite $p$-variation over $[0,T]$ controlled by a control function $\omega$. There exists a unique multiplicative functional $\tilde{X}$ of degree $n$ that has a finite $p$-variation over $[0,T]$ and such that $\pi_{[p]}(X)=\pi_{[p]}(\tilde{X})$. Furthermore, the $p$-variation of $\tilde{X}$ is also controlled by $\omega$.
	\end{theo}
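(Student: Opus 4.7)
The plan is to construct the extension $\tilde{X}$ inductively on the degree, starting from degree $[p]+1$, since uniqueness is an immediate consequence of Lemma \ref{Petit32}. Assume inductively that $\tilde{X}^0, \tilde{X}^1, \ldots, \tilde{X}^m$ have already been defined for some $m \geq [p]$, in such a way that they fit together into a continuous multiplicative functional of degree $m$ whose $p$-variation is controlled by $\omega$ (the base case $m = [p]$ being the given $X$). For $(s,t) \in \Delta_{[0,T]}$ and each finite partition $D = (s = t_0 < t_1 < \cdots < t_N = t)$, I would introduce the degree-$(m+1)$ partition approximation
\[
\tilde{X}^{m+1,D}_{(s,t)} := \sum_{\substack{k_1 + \cdots + k_N = m+1 \\ 1 \leq k_j \leq m}} \tilde{X}^{k_1}_{(t_0,t_1)} \otimes \cdots \otimes \tilde{X}^{k_N}_{(t_{N-1},t_N)},
\]
which is exactly the degree-$(m+1)$ component of the tensor product $\tilde{X}_{(t_0,t_1)} \otimes \cdots \otimes \tilde{X}_{(t_{N-1},t_N)}$ with degrees above $m$ interpreted as zero. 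The component $\tilde{X}^{m+1}_{(s,t)}$ will be defined as the limit of $\tilde{X}^{m+1,D}_{(s,t)}$ as the mesh of $D$ tends to zero.

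The heart of the proof is a quantitative refinement estimate: if $D'$ is obtained from $D$ by removing a single intermediate point $t_j$, the difference $\tilde{X}^{m+1,D}_{(s,t)} - \tilde{X}^{m+1,D'}_{(s,t)}$ involves only the two factors associated to $[t_{j-1},t_j]$ and $[t_j,t_{j+1}]$, so by the inductive hypothesis and admissibility of the tensor norms, it is bounded by a sum of products of the form $\omega(t_{j-1},t_j)^{k/p}\omega(t_j,t_{j+1})^{(m+1-k)/p}/[(k/p)!((m+1-k)/p)!]$ times $1/\beta_p^2$. Since $(m+1)/p > 1$, the standard Lyons trick of iteratively deleting from $D$ the point $t_j$ minimising $\omega(t_{j-1},t_{j+1})$ combined with super-additivity of $\omega$ yields a telescoping bound whose total contribution is controlled by a convergent series in $(2/k)^{([p]+1)/p}$. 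This both shows that $(\tilde{X}^{m+1,D}_{(s,t)})_D$ is Cauchy (hence admits a limit $\tilde{X}^{m+1}_{(s,t)}$) and produces an explicit bound on the distance between the limit and the trivial-partition value.

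Multiplicativity of $\tilde{X}$ at the new degree is then inherited from the construction: any partition of $[s,t]$ that contains an intermediate point $u$ splits as the concatenation of a partition of $[s,u]$ and one of $[u,t]$, so $\tilde{X}^{m+1,D}_{(s,t)}$ factorises into the convolution of the two sub-approximations, and this identity survives the limit to give $\tilde{X}_{(s,t)} = \tilde{X}_{(s,u)} \otimes \tilde{X}_{(u,t)}$. For the $p$-variation control of $\tilde{X}^{m+1}$ by the same $\omega$, I would combine the trivial partition estimate with the refinement bound above, and invoke the neo-classical inequality (Lemma \ref{NeoClassical}) to convert the sum $\sum_{k=1}^{m} \omega(s,u)^{k/p}\omega(u,t)^{(m+1-k)/p}/[(k/p)!((m+1-k)/p)!]$ into $p \cdot \omega(s,t)^{(m+1)/p}/((m+1)/p)!$; the constant $\beta_p$ is precisely calibrated so that, after adding the refinement error, the same factor $1/[\beta_p ((m+1)/p)!]$ reappears.

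The main obstacle is closing the induction with the \emph{same} constant $\beta_p$ across all degrees: a naive application of the triangle inequality would accumulate a multiplicative loss at each step and only permit finitely many extensions, so the proof hinges on the sharp form of Lemma \ref{NeoClassical} and on the explicit series $p(1+\sum_k (2/k)^{([p]+1)/p})$ defining $\beta_p$, which exactly absorbs the refinement error uniformly in $m$. Once this is achieved, the induction produces $\tilde{X}^i$ for all $i \leq n$ (including $n = \infty$), the resulting $\tilde{X}$ is a multiplicative functional with $p$-variation controlled by $\omega$, and uniqueness follows from Lemma \ref{Petit32} applied to $\tilde{X}$ and any candidate extension.
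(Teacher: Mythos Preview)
The paper does not actually supply a proof of Theorem~\ref{ExtThRP}; it is one of the foundational results that the authors explicitly admit without proof, deferring to \cite{Flour} and \cite{qian}. Your proposal is the standard Lyons argument found in those references: inductive construction of the next tensor level via partition approximations, control of the refinement error by successively removing the partition point minimising $\omega(t_{j-1},t_{j+1})$, and closure of the induction using the neo-classical inequality together with the specific choice of $\beta_p$. This is correct and is precisely the proof the paper is pointing to, so there is nothing to compare.
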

	\begin{Rem} Theorem \ref{ExtThRP} partially encompasses Young's integration theory \cite{Young} in the sense that it allows the construction of the signature of a path of finite $p$-variation, when $p<2$, using only the increments of the path.
	\end{Rem}
	The signature of a path of bounded variation is, therefore, by theorem \ref{ExtThRP}, the only multiplicative functional with finite $1$-variation which component of degree $1$ corresponds to the increments of said path.
	
	We are now ready to give a definition for rough paths:
  \begin{Def}[Rough Paths] \cite{CLL,Lyons}
  Let $p\geq1$ and $E$ be a Banach space. A $p$-rough path is a multiplicative functional of degree $[p]$ that has finite $p$-variation. The set of all $p$-rough paths over $[0,T]$ with values in $E$ will be denoted by $\Omega_p([0,T];E)$, while that of $p$-rough paths over sub-segments of $[0,T]$ will be denoted by $\Omega_p^{[0,T]}(E)$.
  \end{Def}
  \begin{Def}
  We define on $\mathcal{C}_{0,p}(\Delta_{[0,T]},T^{[p]}(E))$ the $p$-variation metric, denoted by $\tilde{d}_p$, by the following:
 \[ \tilde{d}_p(X, Y)=\max_{0\leq i \leq [p]} \sup_{D \in \mathcal{D}_{[0,T]}} \left( \sum_D \|X_{(t_j,t_{j+1})}^i- Y_{(t_j,t_{j+1})}^i \|^{\frac{p}{i}} \right)^{\frac{1}{p}} \]
	for all $X,  Y \in \mathcal{C}_{0,p}(\Delta_{[0,T]},T^{[p]}(E))$.\\
	(For a subdivision $D=(t_j)_{0\leq j \leq n}$, we write $\sum\limits_D a_{(t_j,t_{j+1})}:=\sum\limits_{j=0}^{n-1} a_{(t_j,t_{j+1})}$.)
  \end{Def}
	Let us note the following important property:
  \begin{theo}\label{OmegaComplete}\cite{LyQ}
  $(\Omega_p([0,T];E),\tilde{d}_p)$ is a complete metric space.
  \end{theo}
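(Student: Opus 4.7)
The plan is to pick an arbitrary Cauchy sequence $(X(n))_n$ in $(\Omega_p^{[0,T]}(E),\tilde d_p)$, extract a pointwise limit $X$ componentwise, verify that $X$ is a $p$-rough path, and finally check that $X(n) \to X$ in $\tilde d_p$.

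First I would notice that specializing the definition of $\tilde d_p$ to the trivial subdivision $D=\{s,t\}$ yields, for every $(s,t)\in\Delta_{[0,T]}$ and every $i\in\{1,\dots,[p]\}$,
\[
\|X^i_{(s,t)}(n)-X^i_{(s,t)}(m)\|\leq \tilde d_p(X(n),X(m))^{i},
\]
uniformly in $(s,t)$. Thus $(X^i(n))_n$ is a uniform Cauchy sequence on $\Delta_{[0,T]}$, and since $E^{\otimes i}$ is Banach under its admissible norm, it converges uniformly to a continuous $X^i:\Delta_{[0,T]}\to E^{\otimes i}$. Setting $X^0\equiv 1$ and bundling the components defines a continuous $X:\Delta_{[0,T]}\to T^{([p])}(E)$. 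The identities $X(n)_{(t,t)}=\mathbf 1$ and $X(n)_{(s,t)}=X(n)_{(s,u)}\otimes X(n)_{(u,t)}$ then pass to the limit (the tensor product is continuous on $T^{([p])}(E)$ by admissibility), so $X$ is a multiplicative functional of degree $[p]$.

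Next I would prove that $X$ has finite $p$-variation in the sense of Definition \ref{VarMF}. Being Cauchy, the sequence satisfies $\tilde d_p(X(n),X(N))\leq 1$ for some $N$ and all $n\geq N$; since $p/i\geq 1$, an application of $(a+b)^{p/i}\leq 2^{p/i-1}(a^{p/i}+b^{p/i})$ gives, for every subdivision $D$,
\[
\sum_D\|X^i(n)_{(t_j,t_{j+1})}\|^{p/i}\leq 2^{p/i-1}\bigl(1+\|X^i(N)\|_{p/i,[0,T]}^{p/i}\bigr),
\]
uniformly in $n$. Passing pointwise to the limit in $n$ and taking $\sup_D$ shows that each $X^i$ has finite $p/i$-variation on every subinterval. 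I would then construct the candidate control
\[
\omega(s,t):=\sum_{i=1}^{[p]}\bigl(\beta_p(i/p)!\bigr)^{p/i}\,\|X^i\|_{p/i,[s,t]}^{p/i},
\]
so that by construction $\|X^i_{(s,t)}\|\leq \|X^i\|_{p/i,[s,t]}\leq \omega(s,t)^{i/p}/(\beta_p(i/p)!)$, and verify that $\omega$ is a control. The super-additivity of each summand is immediate from the fact that a partition of $[s,t]$ through $u$ restricts to partitions of $[s,u]$ and $[u,t]$, and the diagonal property follows from $X^i_{(t,t)}=0$ for $i\geq 1$.

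Finally, for the convergence in $\tilde d_p$, for each fixed subdivision $D$ (hence a finite sum), pointwise convergence lets me write
\[
\sum_D\|X^i(n)-X^i\|^{p/i}=\lim_m \sum_D\|X^i(n)-X^i(m)\|^{p/i}\leq\liminf_m \tilde d_p(X(n),X(m))^p;
\]
taking $\sup_D$, then $\max_i$, then $1/p$-th powers, I obtain $\tilde d_p(X(n),X)\leq \liminf_m \tilde d_p(X(n),X(m))$, which tends to $0$ by Cauchyness. The main obstacle is the verification that $\omega$ above is genuinely a control, specifically its continuity: the super-additivity and diagonal vanishing are formal, but continuity of the simplex variation function $(s,t)\mapsto\|X^i\|_{p/i,[s,t]}^{p/i}$ requires the analogue, for multiplicative functionals, of the classical argument that the variation of a continuous path is continuous in its endpoints. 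A cleaner bypass is to extract a fast-Cauchy subsequence with $\tilde d_p(X(n_k),X(n_{k+1}))\leq 2^{-k}$, show its convergence to $X$ in the $p$-variation topology (which packages a common control into the definition), and then invoke Proposition \ref{PVARTOPMET} to transfer back to metric convergence and conclude for the whole sequence.
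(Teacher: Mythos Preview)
The paper does not prove this theorem at all; it simply states the result and refers the reader to \cite{qian} for a complete proof. There is therefore nothing in the paper to compare your argument against. That said, your outline is essentially the standard argument one finds in that reference: uniform convergence of each level from the Cauchy property, passage to the limit in the multiplicative identity, a Fatou-type bound for the $\tilde d_p$-convergence, and construction of a control for the limit.

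You have correctly isolated the one genuinely delicate point, namely the continuity in $(s,t)$ of the candidate control $\omega(s,t)=\sum_i(\beta_p(i/p)!)^{p/i}\|X^i\|_{p/i,[s,t]}^{p/i}$. For $i=1$ this is exactly Lemma~\ref{NatControl}, but for $i\geq 2$ the component $X^i$ is not the increment of a path and the argument must use the multiplicative structure. Your proposed bypass through a fast subsequence and Proposition~\ref{PVARTOPMET} is not obviously simpler, and as written is somewhat circular: convergence in the $p$-variation topology itself presupposes the existence of a common control for the whole sequence and the limit, so you are back to exhibiting one. The cleanest route is to prove the continuity of $\omega$ directly: super-additivity already gives one-sided continuity, and the other direction follows from the uniform continuity of $X$ on the compact simplex $\Delta_{[0,T]}$ together with the multiplicative relation $X^i_{(s,t)}=\sum_{k=0}^{i}X^k_{(s,u)}\otimes X^{i-k}_{(u,t)}$, which controls the effect of perturbing an endpoint through a point $u$ close to it.
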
 
	\subsubsection{Geometric $p$-rough paths} We introduce now geometric rough paths, a central notion in our paper. For more details, see \cite{CLL} or \cite{Lyons} (or \cite{FV} for an extensive study of the subject in the finite-dimensional case motivated with examples from stochastic analysis).
	\begin{Def}\label{DefGeoRP} Let $T\geq 0$ and $E$ be a real Banach space. We define the set of $E$-valued geometric $p$-rough paths over $[0,T]$ to be the closure of the set $\{S_{[p]}(x)|x\in \mathcal{V}^1([0,T],E)\}$ in the $p$-variation metric. It is denoted by $G\Omega_p([0,T];E)$. The set of $E$-valued geometric $p$-rough paths over  sub-segments of $[0,T]$ will be denoted $G\Omega_p^{[0,T]}(E)$.
  \end{Def}
	Rough paths do not take into account the shuffle product property satisfied by signatures. Geometric rough paths do not have this ``setback" by continuity of linear forms:
  \begin{Prop}\label{Cons}
  Let $E$ be a Banach space and $T\geq 0$. Let $X$ be a geometric $p$-rough path over $[0,T]$ in $E$.
  Then $X$ is an element of $\Omega_p^{[0,T]}(E)$ (in particular, $X$ is multiplicative and has finite $p$-variation) and for every $(s,t)\in \Delta_{[0,T]}$, $X_{(s,t)}$ satisfies the shuffle product property.
  \end{Prop}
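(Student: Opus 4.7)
The plan is to exploit the fact that every property in the statement is a closed condition under the mode of convergence induced by $\tilde{d}_p$, so the claim reduces to transferring properties from the bounded-variation signatures $S_{[p]}(x)$ to their $\tilde{d}_p$-limits. The technical heart of the argument is the observation that $\tilde{d}_p$-convergence forces uniform convergence at each tensor level: picking the trivial subdivision $D = \{0,s,t,T\}$ in the definition of $\tilde{d}_p$ gives, for any $Y, Z \in \mathcal{C}_{0,p}(\Delta_{[0,T]}, T^{[p]}(E))$ and every $0 \leq i \leq [p]$,
\[\|Y^i_{(s,t)}-Z^i_{(s,t)}\| \leq \tilde{d}_p(Y,Z)^i,\]
uniformly in $(s,t) \in \Delta_{[0,T]}$. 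Pick a sequence $(x_n) \subset \mathcal{V}^1([0,T],E)$ with $S_{[p]}(x_n) \to X$ in $\tilde{d}_p$; the inequality then yields $S^i(x_n) \to X^i$ uniformly on $\Delta_{[0,T]}$ for each $i \leq [p]$.

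Given this uniform convergence, I would verify membership in $\Omega_p^{[0,T]}(E)$ in the natural order. Continuity of $X$ and finiteness of its $p$-variation hold by construction, as $X$ lies in $\mathcal{C}_{0,p}(\Delta_{[0,T]}, T^{[p]}(E))$ by definition of the closure. The unit condition $X_{(t,t)} = \mathbf{1}$ is immediate from passing $S_{[p]}(x_n)_{(t,t)} = \mathbf{1}$ to the limit. For Chen's multiplicativity $X_{(s,t)} = X_{(s,u)} \otimes X_{(u,t)}$, I would pass Theorem \ref{Chen1} (in the form of Remark \ref{NoConcat}) to the limit, invoking the joint continuity of the tensor product on $T^{[p]}(E)$, which is a direct consequence of the admissibility of the chosen norms.

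For the shuffle property, I would fix homogeneous linear forms $e \in (E^*)^{\otimes i}$ and $f \in (E^*)^{\otimes j}$ with $i+j \leq [p]$, so that $e \shuffle f$ is a linear form of degree $i+j$ and still acts on $T^{[p]}(E)$. Lemma \ref{Shuffle} applied to $x_n$ produces, at every $(s,t)$,
\[\langle e, S^i(x_n)_{(s,t)}\rangle \cdot \langle f, S^j(x_n)_{(s,t)}\rangle = \langle e \shuffle f, S^{i+j}(x_n)_{(s,t)}\rangle.\]
The uniform convergence of each of $S^i(x_n)$, $S^j(x_n)$, $S^{i+j}(x_n)$ to $X^i$, $X^j$, $X^{i+j}$, combined with continuity of multiplication in $\mathbb{R}$, lets me pass this identity to the limit. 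Extending by linearity to non-homogeneous pairs of forms of total degree at most $[p]$ completes the shuffle identity for $X_{(s,t)}$.

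The step requiring most care, rather than being a genuine obstacle, is the interpretation of the shuffle identity in the truncated setting: Lemma \ref{Shuffle} is stated for the full signature in $T((E))$, so one must restrict to pairs of homogeneous forms whose total degree does not exceed $[p]$, otherwise $e \shuffle f$ leaves $T^{[p]}(E)$. (Alternatively, one may extend $X$ by Theorem \ref{ExtThRP} and argue level-by-level, noting that each extended level is again a uniform limit thanks to the common control $\omega$ provided via Proposition \ref{PVARTOPMET} and the extension theorem applied to each $S_{[p]}(x_n)$; but the truncated formulation suffices for the statement as given.)
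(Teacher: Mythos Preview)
Your argument is correct and follows essentially the same line as the paper's: pass the algebraic identities (multiplicativity, shuffle) satisfied by $S_{[p]}(x_n)$ through the $\tilde d_p$-limit, using that $\tilde d_p$-convergence dominates pointwise convergence at each tensor level, so that continuous linear forms on $T^{[p]}(E)$ are continuous for $\tilde d_p$. The only cosmetic difference is that the paper absorbs the verification of $X\in\Omega_p^{[0,T]}(E)$ into Theorem~\ref{OmegaComplete} (completeness of $\Omega_p$), whereas you re-derive multiplicativity and the unit condition by hand from the uniform convergence; this is simply an unpacking of part of that completeness result rather than a different idea.
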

	\begin{Rem}
	Let $(G^{[p]},\otimes)$ be the Carnot group of the elements in the tensor algebra of degree at most $[p]$ satisfying the shuffle product property, endowed with the homogeneous norm: $\|.\|:(1,g_1,\ldots,g_{[p]})\mapsto \max_{1\leq i \leq [p]}\|i!g_i\|^{1/i}$. It is interesting to see the analogy between the restriction of $\tilde{d}_p$ to the space of geometric rough paths and the $p$-variation norm for paths taking their values in $(G^{[p]},\otimes)$ (see \cite{LV} for details).
	\end{Rem}
	We can also define the concatenation of paths taking their values in the truncated tensor algebra:
	\begin{Def} Let $n\in\mathbb{N}^*$. Let $E$ be a vector space. Let $s,u,t\in\mathbb{R}$ such that $s\leq u \leq t$. Let $X$ (resp. $Y$) be a functional defined on $\Delta_{[s,u]}$ (resp. on $\Delta_{[u,t]}$) with values in $T^{n}(E)$. We define the concatenation of $X$ and $Y$, denoted $X*Y$, to be the functional over $\Delta_{[s,t]}$ defined as follows: for $(a,b)\in\Delta_{[s,t]}$
	\[(X*Y)_{(a,b)}=\left\{\begin{array}{lcl}
	X_{(a,b)}&,&\textrm{ if }b\leq u\\
	X_{(a,u)}\otimes Y_{(a,u)}&,&\textrm{ if }a\leq u\leq b\\
	Y_{(a,b)}&,&\textrm{ if }u\leq a\\
	\end{array}\right.
	\]
	\end{Def}
	The following theorem is straight-forward:
	\begin{theo} Let $p\geq 1$. Let $E$ be a Banach space. Let $s,u,t\in\mathbb{R}$ such that $s\leq u \leq t$. Let $X$ (resp. $Y$) be a functional defined on $\Delta_{[s,u]}$ (resp. on $\Delta_{[u,t]}$) with values in $T^{[p]}(E)$. Then:
	\begin{itemize}
	\item If $X$ and $Y$ are multiplicative functionals, then $X*Y$ is a multiplicative functional;
	\item If $X$ and $Y$ have finite $p$-variation, then $X*Y$ has finite $p$-variation;
	\item If $X$ and $Y$ are geometric $p$-rough paths, then $X*Y$ is a geometric $p$-rough path.
	\end{itemize}
	\end{theo}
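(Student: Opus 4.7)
The plan is to treat the three bullets in order, with the third following from the first two by a density argument in the $p$-variation metric.

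For multiplicativity, I fix $a\leq c\leq b$ in $[s,t]$ and do a case analysis on the position of $u$ relative to $a,c,b$. In each case the identity $(X*Y)_{(a,b)}=(X*Y)_{(a,c)}\otimes(X*Y)_{(c,b)}$ reduces either to the multiplicativity of $X$, that of $Y$, or (when $u$ separates two of the points) to both combined with the associativity of $\otimes$. Continuity on $\Delta_{[s,t]}$ follows because the three branches of the defining formula agree on the matching strips $\{b=u\}$ and $\{a=u\}$ thanks to $X_{(u,u)}=Y_{(u,u)}=\mathbf{1}$, and $(X*Y)_{(a,a)}=\mathbf{1}$ is immediate.

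For the $p$-variation estimate, let $\omega_X,\omega_Y$ be controls of the $p$-variations of $X,Y$ in the sense of Definition \ref{VarMF}, and define
\[
\omega(a,b) := \omega_X(a\wedge u,\,b\wedge u) \,+\, \omega_Y(a\vee u,\,b\vee u), \qquad (a,b)\in\Delta_{[s,t]}.
\]
A short case check (on the position of $u$ relative to $a,c,b$) shows that $\omega$ is continuous, vanishes on the diagonal, and is super-additive. For $(a,b)$ with $b\leq u$ or $u\leq a$, the bound $\|(X*Y)^i_{(a,b)}\|\leq \omega(a,b)^{i/p}/(\beta_p (i/p)!)$ is inherited directly from the hypothesis on $X$ or on $Y$. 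The only substantive case is $a\leq u\leq b$, where expanding the tensor product
\[
(X*Y)^i_{(a,b)} = \sum_{k=0}^{i} X^k_{(a,u)}\otimes Y^{i-k}_{(u,b)}
\]
and using admissibility of the tensor norms yields
\[
\|(X*Y)^i_{(a,b)}\| \leq \frac{1}{\beta_p^2}\sum_{k=0}^{i}\frac{\omega_X(a,u)^{k/p}\,\omega_Y(u,b)^{(i-k)/p}}{(k/p)!\,((i-k)/p)!}.
\]
The neo-classical inequality (Lemma \ref{NeoClassical}) bounds this sum by $p\,\omega(a,b)^{i/p}/(i/p)!$, and the elementary inequality $p\leq\beta_p$, immediate from the series defining $\beta_p$, reduces it to the required $\omega(a,b)^{i/p}/(\beta_p(i/p)!)$.

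For the geometric bullet, pick sequences $x_n\in\mathcal{V}^1([s,u],E)$ and $y_n\in\mathcal{V}^1([u,t],E)$ with $S_{[p]}(x_n)\to X$ and $S_{[p]}(y_n)\to Y$ in the $p$-variation metric, and set $z_n:=x_n*y_n\in\mathcal{V}^1([s,t],E)$. By Chen's theorem (Theorem \ref{Chen1}) projected via $\pi_{[p]}$ one has $S_{[p]}(z_n) = S_{[p]}(x_n)*S_{[p]}(y_n)$. Passing from the $p$-variation metric to the $p$-variation topology via Proposition \ref{PVARTOPMET}, and replaying the admissibility-plus-neo-classical estimate of the second bullet on differences of the form $X^k-S_{[p]}(x_n)^k$ and $Y^k-S_{[p]}(y_n)^k$ (via the identity $AB-A'B'=(A-A')B+A'(B-B')$ applied in the middle case), shows that $S_{[p]}(z_n)\to X*Y$ in the $p$-variation topology; applying Proposition \ref{PVARTOPMET} a second time extracts a subsequence that converges in the $p$-variation metric, placing $X*Y$ in the closure defining $G\Omega_p^{[s,t]}(E)$. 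The main technical input is the use of the neo-classical inequality in the middle bullet; everything else is casework and the density argument just sketched.
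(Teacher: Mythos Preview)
Your argument is correct in structure and is the standard route; the paper itself offers no proof, calling the result ``straight-forward''. Your casework for multiplicativity, the glued control $\omega(a,b)=\omega_X(a\wedge u,b\wedge u)+\omega_Y(a\vee u,b\vee u)$, and the density argument via Chen and Proposition~\ref{PVARTOPMET} are exactly what one would write out.

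One small slip to fix in the second bullet: the displayed bound
\[
\|(X*Y)^i_{(a,b)}\| \leq \frac{1}{\beta_p^{2}}\sum_{k=0}^{i}\frac{\omega_X(a,u)^{k/p}\,\omega_Y(u,b)^{(i-k)/p}}{(k/p)!\,((i-k)/p)!}
\]
is not quite right at the boundary indices $k=0$ and $k=i$. The control in Definition~\ref{VarMF} applies only to levels $i\geq 1$, while $X^0_{(a,u)}=Y^0_{(u,b)}=1$; hence those two terms carry only a single factor $1/\beta_p$, not $1/\beta_p^{2}$. The honest upper bound is $\frac{1}{\beta_p}$ times the sum, and after applying Lemma~\ref{NeoClassical} you arrive at $\frac{p}{\beta_p}\cdot\frac{\omega(a,b)^{i/p}}{(i/p)!}$ rather than $\frac{1}{\beta_p}\cdot\frac{\omega(a,b)^{i/p}}{(i/p)!}$. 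This does not damage the conclusion: replacing $\omega$ by the control $p^{p}\omega$ absorbs the extra factor uniformly over $1\leq i\leq[p]$, so $X*Y$ still has finite $p$-variation. The same adjustment applies in your third bullet when you replay the estimate on differences.
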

	It will be useful to us to attach a starting point to our geometric rough paths as we will be mostly dealing with integrals of rough paths and rough paths on manifolds; both of which require a starting point. In the next few sections, a geometric rough path in a Banach space $E$ will be a pair $(x,X)$, where $X$ is a geometric rough path in the sense of definition \ref{DefGeoRP} and $x\in E$ is called the starting point. We will  identify the space of geometric rough paths with starting points with the Cartesian product $E \times G\Omega_p^{[0,T]}(E)$.\\
	 On $E\times G\Omega_p^{[0,T]}(E)$ we define a metric $d_p$ as the product metric of $\tilde{d}_p$ and the norm on $E$, i.e.:
	\[d_p((x,X),(y,Y))=\max(\|x-y\|,\tilde{d}_p(X,Y))\]	
	In other situations, it will be more convenient to attach a trace to a rough path instead of a starting point (i.e. a path which increments correspond the element of degree 1 in said rough path).
	\begin{Def}\label{DefLocalGRP}
	Let $E$ be a Banach space and $p\geq1$. For an open subset $U$ of $E$, a local geometric $p$-rough path in $U$ is a triple $(x,X,J)$ such that:
	\begin{itemize}
	\item $J$ is a compact interval.
	\item $x$ is a $U$-valued path over $J$.
	\item $X$ is an $E$-valued geometric $p$-rough path over $J$ which trace is $x$, i.e. for all $(s,t)\in\Delta_{J}$, $X^1_{s,t}=x_t-x_s$.
	\end{itemize}
	The set of local geometric $p$-rough paths in $U$ defined over compact sub-intervals of $I$ will be denoted $G\Omega_p^I(U;E)$, that of local geometric $p$-rough paths in $U$ defined over a compact interval $J$ will be denoted $G\Omega_p(J;U;E)$.
	\end{Def}
	\begin{Rem} The trace of a geometric $p$-rough path is trivially a path of finite $p$-variation.
	\end{Rem}
	\begin{Rem} It goes without saying that rough paths with starting points easily identify with local rough paths. Indeed, if $J$ is of the form, say, $[0,T]$, we will identify the local rough path $(x,X)$ (omitting the interval $J$ when no confusion is possible) with the rough path with starting point $(x(0),X)$.
	\end{Rem}
	\subsubsection{The integral of Lipschitz one-forms along geometric $p$-rough paths}
	When trying to make sense of integrals of one-forms along rough paths, it is very important (for example in regard to solving a differential equation) to be able to control the smoothness (in terms of variation) of the image of the path under the one-form. It appears that Lipschitz maps, in the sense of Stein \cite{Stein}, are the appropriate type of maps to use in this context (for an extensive dedicated study of these maps, see for example \cite{Boutaib}):
	\begin{Def}\label{DefLipMap} Let $n\in \mathbb{N}$ and $0< \varepsilon\leq 1$. Let $E$ and $F$ be two normed vector spaces and $U$ be a subset of $E$. Let $f^0:U\to F$ be a map and for every $k \in [\![1,n]\!]$, let $f^k:U\to \mathcal{L}_s(E^{\otimes k},F)$ be a map with values in the space of the symmetric $k$-linear mappings from $E$ to $F$. \\
	For $k \in [\![0,n]\!]$, the map $R_k:E\times E\to \mathcal{L}(E^{\otimes k},F)$ defined by:
	\[\forall x,y \in U, \forall v \in E^{\otimes k}: f^k(x)(v)=\sum\limits_{j=k}^{n} f^j(y)(\frac{v\otimes (x-y)^{\otimes (j-k)}}{(j-k)!})+R_k(x,y)(v)\]
	is called the remainder of order $k$ associated to $f=(f^0,f^1,\ldots,f^n)$.\\	
	The collection $f=(f^0,f^1,\ldots,f^n)$ is said to be Lipschitz of degree $n+\varepsilon$ on $U$ (or in short a $\textrm{Lip}-(n+\varepsilon)$ map) if there exists a constant $M$ such that for all $k \in [\![0,n]\!]$, $x,y \in U$ and $v_1,\ldots,v_k \in E$:
	\begin{enumerate}
	\item $\|f^k(x)(v_1\otimes\cdots\otimes v_k)\| \leq M \|v_1\otimes\cdots\otimes v_k\| $;
	\item $\|R_k(x,y)(v_1\otimes\cdots\otimes v_k)\|\leq M \|x-y\|^{n+\varepsilon-k}\|v_1\otimes\cdots\otimes v_k\|$.
	\end{enumerate}
     The smallest constant $M$ for which the properties above hold is called the $\textrm{Lip}-(n+\varepsilon)$-norm of $f$ and is denoted by $\|f\|_{\textrm{Lip}-(n+\varepsilon)}$. The set of all $\textrm{Lip}-(n+\varepsilon)$ maps defined on $U$ with values in $F$ will be denoted $\textrm{Lip}(n+\varepsilon, U , F)$.
     \end{Def}
	\begin{Rem}
	Let us stress that the above definition is purely quantitative and makes sense even on discrete sets. On any non-empty open subset of $U$, $f^1,\ldots,f^n$ are the successive derivatives of $f^0$. However, these maps are not necessarily uniquely determined by $f^0$ on an arbitrary set $U$. Keeping this in mind, if $f^0:U\to F$ is a map such that there exist $f^1,\ldots,f^n$ such that $(f^0,f^1,\ldots,f^n)$ is $\textrm{Lip}-(n+\varepsilon)$, we will often say that $f^0$ is $\textrm{Lip}-(n+\varepsilon)$ with no mention of $f^1,\ldots,f^n$.
     \end{Rem}
	\begin{theo}\label{OneFormRP} \cite{CLL,Lyons} Let $\gamma,p \in\mathbb{R}$ such that $\gamma>p\geq1$. Let $E$ and $F$ be two Banach spaces. Let $\alpha:E \to \mathcal{L}_c(E,F)$ be a $\textrm{Lip}-(\gamma-1)$ one-form. There exists a unique continuous map:
	\[I_{\alpha}:(E\times G\Omega_p^{[0,T]}(E),d_p)\longrightarrow(G\Omega_p^{[0,T]}(F),\tilde{d}_p)\]
	that extends Young's integration theory, i.e. for all $x\in\mathcal{V}^1([0,T],E)$, $I_{\alpha}(x_0,S_{[p]}(x))=S_{[p]}(\int \alpha(x)\mathrm{d}x)$. For a geometric $p$-rough path with a starting point $(x_0,X)$ or its corresponding local rough path $(x,X)$, we denote: 
	\[I_{\alpha}(x_0,X)=\int \alpha(x_0,X)\mathrm{d}X=\int \alpha(x,X)\mathrm{d}X\]
	\end{theo}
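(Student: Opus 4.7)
The plan is to exploit the density of $\{(x_0, S_{[p]}(x)) : x\in\mathcal{V}^1([0,T],E)\}$ in $G\Omega_p^{[0,T]}(E)$ (definition \ref{DefGeoRP}), so that it suffices to construct a uniformly continuous extension of the map $(x_0, S_{[p]}(x)) \mapsto S_{[p]}(\int \alpha(x)\,\mathrm{d}x)$. Uniqueness of $I_\alpha$ is then automatic from continuity together with this density.

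For a geometric $p$-rough path $(x_0, X)$ with $X$ controlled by a control $\omega$, I would produce, on each sub-interval $[s,t]$, a local candidate $Y_{(s,t)} \in T^{[p]}(F)$ for the truncated signature of the would-be integral. Writing $x_s := x_0 + X^1_{(0,s)}$, the construction of $Y$ mimics, at order $[p]$, the Taylor expansion around $x_s$ of the smooth-case formula for the iterated integrals of $\alpha(x)\mathrm{d}x$, using the derivatives $\alpha^0,\ldots,\alpha^{[p]-1}$ supplied by the $\textrm{Lip}-(\gamma-1)$ hypothesis. For example, the first level is
\[
Y^1_{(s,t)} := \sum_{j=1}^{[p]} \frac{1}{(j-1)!}\,\alpha^{j-1}(x_s)\bigl(X^j_{(s,t)}\bigr),
\]
and higher levels are defined analogously by the appropriate tensor expansions. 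On a bounded-variation input, $Y_{(s,t)}$ agrees with the true truncated signature of $\int_s^\cdot \alpha(x)\,\mathrm{d}x$ modulo a remainder of order $\omega(s,t)^{\gamma/p}$.

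The main technical step, and the hard part of the argument, is the \emph{almost-multiplicativity} bound
\[
\bigl\|(Y_{(s,u)} \otimes Y_{(u,t)} - Y_{(s,t)})^k\bigr\| \leq C\,\omega(s,t)^{\gamma/p} \qquad (1\leq k\leq [p],\ s\leq u\leq t),
\]
which is obtained by combining three ingredients: the Taylor remainder bound $\|R_k(x_s,x_u)\| \leq \|\alpha\|_{\textrm{Lip}-(\gamma-1)}\|x_u-x_s\|^{\gamma-1-k}$ coming from the Lipschitz definition, the $p$-variation control on the $X^j$ from definition \ref{VarMF}, and the neo-classical inequality (lemma \ref{NeoClassical}) used to recombine the factorials that appear upon expanding the tensor product. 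Since $\gamma/p > 1$, a dyadic-refinement/telescoping argument in the spirit of the proof of the extension theorem \ref{ExtThRP} then promotes $Y$ to a unique genuine multiplicative functional $\tilde Y$ of finite $p$-variation satisfying $\|\tilde Y^k_{(s,t)} - Y^k_{(s,t)}\| \leq C\,\omega(s,t)^{\gamma/p}$.

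Finally, I would verify that the map $(x_0, X) \mapsto \tilde Y$ is continuous in $d_p$ by re-running the above estimates with two rough paths and tracking the dependence on $x_0$ (which enters only through evaluations $\alpha^j(x_s)$, Lipschitz by $\|\alpha\|_{\textrm{Lip}-(\gamma-1)}$) and on $X$ (through $\omega$ and the tensor increments, handled in the $p$-variation topology and converted to the metric by Proposition \ref{PVARTOPMET}). Evaluated on smooth inputs, the uniqueness clause of the extension theorem \ref{ExtThRP} forces $\tilde Y = S_{[p]}(\int \alpha(x)\,\mathrm{d}x)$; since such signatures are dense, $\tilde Y$ is a geometric $p$-rough path in general, giving $I_\alpha$ with values in $G\Omega_p^{[0,T]}(F)/\sim$. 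Uniqueness then follows from continuity and density, completing the proof.
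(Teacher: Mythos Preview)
The paper does not actually prove Theorem~\ref{OneFormRP}: it appears in the review section, whose opening paragraph explicitly states that most results there are admitted without proof and can be found in \cite{Flour} and \cite{qian}. There is therefore no ``paper's own proof'' to compare against.

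That said, your sketch is precisely the standard construction carried out in those references: one builds an almost-multiplicative functional $Y$ from the Taylor-type expansion of $\alpha$ applied to the levels of $X$, uses the $\textrm{Lip}-(\gamma-1)$ remainder bounds together with the $p$-variation control and the neo-classical inequality to obtain almost-multiplicativity with exponent $\gamma/p>1$, and then invokes the extension/sewing machinery behind Theorem~\ref{ExtThRP} to produce the genuine multiplicative functional and its continuity in $d_p$. This is exactly the argument the paper is deferring to, so your proposal is appropriate and correct in outline. One small point worth tightening: when you assert that $\tilde Y$ lands in $G\Omega_p^{[0,T]}(F)/\!\sim$ (i.e.\ is \emph{geometric}), the density argument you invoke requires not just that $\tilde Y$ agrees with a signature on smooth inputs, but that the map $(x_0,X)\mapsto\tilde Y$ is continuous; you do state this, but make sure the order of the logic is clear --- continuity first, then closure of the geometric set (Theorem~\ref{OmegaComplete} and Definition~\ref{DefGeoRP}) to conclude geometricity of the limit.
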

	\section{The Lipschitz geometry}
	In this section, we review two of the findings of \cite{CLLy} that are most relevant to our work in this paper: the Lipschitz structure and geometric rough paths on manifolds.
	\subsection{Lipschitz structures}
	We first introduce the general definitions of Lipschitz manifolds and Lipschitz maps and one-forms on them.
	\begin{Def}\cite{CLLy} Let $\gamma>0$. Let $n\in\mathbb{N}^*$ and let $M$ be a $n$-topological manifold. Let $I$ be a countable set and, for every $i\in I$, $U_i$ be an open subset of $M$ and $\phi_i:M\to\mathbb{R}^n$ be a compactly supported map such that its restriction on $U_i$ defines a homeomorphism. We say that $((\phi_i,U_i))_{i\in I}$ is a Lipschitz-${\gamma}$ atlas if the following properties are satisfied:
	\begin{itemize}\item $(U_i)_{i\in I}$ is a pre-compact locally finite cover of $M$;
	\item For every $i\in I$: $\phi_i(U_i)=B(0,1)$.
	\item There exists $\delta\in(0,1)$, such that $(U_i^{\delta})_{i\in I}$ covers $M$, where, for every $i\in I$: $U_i^{\delta}={\phi^{-1}_i}_{|U_i}(B(0,1-\delta))$;
	\item There exists $L>0$, such that, for every $i,j\in I$, $\phi_j\circ{({\phi_i}_{|U_i})}^{-1}:B(0,1)\to\mathbb{R}^n$ is Lipschitz-$\gamma$ and $\|\phi_j\circ{({\phi_i}_{|U_i})}^{-1}\|_{\textrm{Lip}-\gamma}\leq L$.
	\end{itemize}
	With the constants above, we will say that $M$ is a Lipschitz-${\gamma}$ manifold with constants $(\delta,L)$.
	\end{Def}
	\begin{example} As one would expect, finite-dimensional vector spaces can be endowed with a natural Lipschitz-$\gamma$ manifold structure of any degree $\gamma\geq1$.\end{example}
	\begin{proof} Indeed, let $\gamma\geq1$. Let $V$ be a finite dimensional space and let $(e_1,\ldots,e_n)$ be a basis for $V$. Let $\varphi$ be a Lip-$\gamma$ extension on $V$ of $Id_{B(0,1)}$ with support in $B(0,2)$. For $x\in V$, let $\varphi_x$ be the map: $y\mapsto \varphi(y-x)$. Then $(\varphi_x,B(x,1))_{x\in I}$ is a Lip-$\gamma$ atlas on $V$, where:
	\[I=\left\{\sum_{i=1}^{n}\frac{k_i}{2} e_i|\, k_1,\ldots,k_n \in \mathbb{Z}\right\}\]
	\end{proof}
	\begin{example}\cite{CLLy}\footnote{This is a slightly stronger result than in \cite{CLLy} but its proof is practically the same.} Let $n\in\mathbb{N}^*$ and $0<\varepsilon\leq 1$. A $\mathcal{C}^{n+1}$ structure on a compact manifold induces a natural Lip-$(n+\varepsilon)$ structure.\end{example}
	\begin{Def}\label{DefinitionLipFuncManifold} Let $\gamma_0\geq 1$ and $d\in \mathbb{N}^*$. Let $M$ be a $d$-dimensional $\textrm{Lip}-\gamma_0$ manifold with an atlas $\{(\phi_i,U_i), i\in I\}$ and $E$ be a normed vector space. 
	\begin{itemize}
		\item A map $f:M\to E$ is said to be $\textrm{Lip}-\gamma$, for $\gamma\leq \gamma_0$, if there exists a constant $C$ such that, for every $i\in I$, $f\circ{\phi_i}_{|U_i}^{-1}:B(0,1)\to E$ is $\textrm{Lip}-\gamma$ with a Lipschitz norm at most $C$. The smallest constant $C$ for which this property holds is called the $\textrm{Lip}-\gamma$ norm of $f$ and is denoted by $\|f\|_{\textrm{Lip}-\gamma}$.
		\item An $E$-valued one-form $\alpha$ on $M$ is said to be $\textrm{Lip}-\gamma$, for $\gamma\leq \gamma_0-1$, if there exists a constant $C$ such that, for every $i\in I$:
	\[({\phi_i}_{|U_i}^{-1})^*\alpha:B(0,1)\to \mathcal{L}(\mathbb{R}^d,E)\] 
	is $\textrm{Lip}-\gamma$ with a Lipschitz norm at most $C$. The smallest constant $C$ for which this property holds is called the $\textrm{Lip}-\gamma$ norm of $\alpha$ and is denoted by $\|\alpha\|_{\textrm{Lip}-\gamma}$.
	\end{itemize}
	\end{Def}
	\begin{Rem}
	An important property that is omitted in \cite{CLLy} and that we underline in the above definition is that, on a $\textrm{Lip}-\gamma_0$ manifold, it does not make sense geometrically to define $\textrm{Lip}-\gamma$ maps or $\textrm{Lip}-(\gamma-1)$ one-forms if $\gamma>\gamma_0$. Indeed, with the notations of definition \ref{DefinitionLipFuncManifold}, if for $i\in I$, $f\circ{\phi_i}_{|U_i}^{-1}:B(0,1)\to E$ is $\textrm{Lip}-\gamma$ (where $\gamma>\gamma_0$), then, based only on the definition of a $\textrm{Lip}-\gamma_0$ atlas, one cannot show that, for $j\in I$, the restriction of $f\circ{\phi_j}_{|U_j\cap U_i}^{-1}:B(0,1)\to E$ on any non-trivial subset of $U_j\cap U_i$ is smoother than $\textrm{Lip}-\gamma_0$. The same principle applies to defining Lipschitz one-forms. A more rigorous way to state the above is obtained by translating it in the language of equivalent Lipschitz structures (see \cite{CLLy} for a definition).
	\end{Rem}
	\subsection{Rough paths on a manifold}
	In this subsection, we generalize the notion of rough paths to manifolds (as presented in \cite{CLLy}). As we don't have a natural notion of linearity and iterated integrals on a manifold, we have to consider a different approach than the one arising from the $p$-variational properties of paths and signatures. As we will hint to later, integrals of Lipschitz one-forms along rough paths do characterize the path; this is the first direction that we will take to define our rough paths. Additionally, in the absence of a natural translation, a rough path on a manifold comes attached with a starting point.
	\begin{Def}\cite{CLLy} Let $\gamma_0, p\in \mathbb{R}$ such that $\gamma_0>p\geq1$ and $T\geq0$. Let $M$ be a $\textrm{Lip}-\gamma_0$ manifold and $x\in M$. $\mathbb{X}$ is a geometric $p$-rough path over $[0,T]$ on $M$ starting at $x$ if, for every $\gamma\in \mathbb{R}$ such that $\gamma_0\geq\gamma>p$ and every Banach space $E$, the following conditions are satisfied:\begin{enumerate}
	\item $\mathbb{X}$ maps $\textrm{Lip}-(\gamma-1)$ $E$-valued one-forms on $M$ to $E$-valued geometric $p$-rough paths (in the classical sense).
	\item There exists a control $\omega$ such that for every $E$-valued $\textrm{Lip}-(\gamma-1)$ one-form $\alpha$ on $M$, $\mathbb{X}(\alpha)$ is controlled by $\|\alpha\|_{\textrm{Lip}-(\gamma-1)}\omega$, i.e.:
	\[\forall (s,t)\in \Delta_{[0,T]}, \forall i\in [\![1,[p]]\!]:\quad \|\mathbb{X}(\alpha)_{(s,t)}^i\|\leq\frac{(\|\alpha\|_{\textrm{Lip}-(\gamma-1)}\omega(s,t))^{i/p}}{\beta_p (i/p)!}\]
	\item(Chain rule) For every Banach space $F$ and every compactly supported $\textrm{Lip}-\gamma$ map $\psi: M\to F$ and every $E$-valued $\textrm{Lip}-(\gamma-1)$ one-form $\alpha$ on $F$ we have:
	\[\mathbb{X}(\psi^*\alpha)=\int{\alpha (\psi(x),\mathbb{X}(\psi_*)) \mathrm{d}\mathbb{X}(\psi_*)}\]
	\end{enumerate}
	\end{Def}
	Contrary to the classical framework, in the context of manifolds, we do not need to make a difference between rough paths and geometric rough paths (as only the latter are defined). Consequently, we drop the word ``geometric'' when talking about geometric rough paths on manifolds. Moreover, in the classical sense, geometric rough paths are uniquely determined by the values of the integrals of compactly supported one-forms along them. In order to make the correspondance one-to-one between the concepts of classical geometric rough paths (on finite-dimensional spaces) and rough paths on the same space when endowed with its canonical Lipschitz structure, we define the following equivalence relation:
	\begin{Def}\label{EquiRelationManRP}\cite{CLLy} Let $\gamma_0, p\in \mathbb{R}$ such that $\gamma_0>p\geq1$. Let $M$ be a $\textrm{Lip}-\gamma_0$ manifold. We say that two $p$-rough paths $\mathbb{X}$ and $\tilde{\mathbb{X}}$ on $M$ are equivalent, and we write $\mathbb{X}\sim\tilde{\mathbb{X}}$, if they have the same starting point and if, for every $\gamma\in\mathbb{R}$ such that $\gamma_0\geq\gamma>p$ and for every $\textrm{Lip}-(\gamma-1)$ compactly supported Banach space valued one-form $\alpha$ on $M$, we have $\mathbb{X}(\alpha)=\tilde{\mathbb{X}}(\alpha)$. 
	\end{Def}
	We give now a hint on how to build a one-to-one correspondance between rough paths in the classical sense and in a Lip-$\gamma$ manifold, when said manifold is a finite-dimensional vector space $V$ (see \cite{CLLy} for the complete proof):
	\begin{itemize}
	\item Given a geometric $p$-rough path $(x,X)$ on $V$, we define the rough path $\mathbb{X}$ in the manifold by the starting point $x$ and the functional sending every compactly supported Banach space-valued Lip-$(\gamma-1)$ one form $\alpha$ to the classical rough path:
	\[\mathbb{X}(\alpha)=\int \alpha(x,X)\mathrm{d}X\]
	\item Conversely, given a rough path $\mathbb{X}$ on $V$ in the manifold sense, we are tempted to retrieve a rough path in the classical sense by integrating $\mathrm{d}Id_{V}$ along $\mathbb{X}$. Since $\mathrm{d}Id_{V}$ is not compactly supported (this is important for the definition to be consistent across the equivalence classes of the equivalence relation introduced in definition \ref{EquiRelationManRP}) and that $Id_{V}$ is not Lip-$\gamma$, we can intuitively replace $Id_{V}$ with a compactly supported Lip-$\gamma$ extension of its restriction on a set containing the ``support" of $\mathbb{X}$. The notion of support is not yet defined at this point but we can still have a good guess at a set containing it by using the control of the $p$-variation of $\mathbb{X}$.
	\end{itemize}
	When working on a manifold, one has to ensure that certain properties are invariant by the change of charts (or, in other words, by local reparametrisation). For this reason, we define the pushforward of rough paths by conveniently chosen maps:
	\begin{lemma}\cite{CLLy} Let $\gamma_0, p \in \mathbb{R}$ such that $\gamma_0>p\geq1$. Let $M$ and $N$ be $\textrm{Lip}-\gamma_0$ manifolds and $f:M\to N$ be a map such that there exists a constant $C_f$ such that, for all $\gamma\in(p,\gamma_0]$ and every $\textrm{Lip}-(\gamma-1)$ Banach space valued one-form $\alpha$ on $M$, we have:
	\[\|f^*\alpha\|_{\textrm{Lip}-(\gamma-1)}\leq C_f\|\alpha\|_{\textrm{Lip}-(\gamma-1)}\]
	Then $f$ induces a pushforward map $f_*$ from $p$-rough paths on $M$ to $p$-rough paths on $N$ defined as follows: for every $p$-rough path $\mathbb{X}$ over $[0,T]$ on $M$ starting at $x$, $f_*\mathbb{X}$ starts at $f(x)$ and for every $\textrm{Lip}-(\gamma-1)$ Banach space valued one form $\alpha$ on $M$, where $\gamma\in(p,\gamma_0]$, $f_*\mathbb{X}(\alpha)$ is given by $f_*\mathbb{X}(\alpha)=\mathbb{X}(f^*\alpha)$.
	\end{lemma}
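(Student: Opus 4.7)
The plan is to verify, for the functional $(f_*X)(\alpha) := X(f^*\alpha)$ defined on Lip-$(\gamma-1)$ one-forms $\alpha$ on $N$, each of the three defining conditions of a rough path on $N$ starting at $f(x)$. Condition (1) is immediate: given $\gamma \in (p,\gamma_0]$, a Banach space $E$, and an $E$-valued Lip-$(\gamma-1)$ one-form $\alpha$ on $N$, the hypothesis on $f$ yields a Lip-$(\gamma-1)$ one-form $f^*\alpha$ on $M$, so $X(f^*\alpha)$ is a classical $E$-valued geometric $p$-rough path. Condition (3) is a direct consequence of the bound $\|f^*\alpha\|_{\textrm{Lip}-(\gamma-1)}\leq C_f\|\alpha\|_{\textrm{Lip}-(\gamma-1)}$: if $\omega_X$ is a control witnessing condition (3) for $X$, then $\omega := C_f\, \omega_X$ is still a control and
\[\|(f_*X)(\alpha)_{(s,t)}^i\| = \|X(f^*\alpha)_{(s,t)}^i\|\leq \frac{(\|f^*\alpha\|_{\textrm{Lip}-(\gamma-1)}\omega_X(s,t))^{i/p}}{\beta_p(i/p)!}\leq\frac{(\|\alpha\|_{\textrm{Lip}-(\gamma-1)}\omega(s,t))^{i/p}}{\beta_p(i/p)!}.\]

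For condition (2), the core observation is the functoriality of pullback: $(\psi\circ f)^*\alpha = f^*(\psi^*\alpha)$ whenever $\psi:N\to F$ and $\alpha$ is a one-form on $F$. Hence, for $\psi$ compactly supported Lip-$\gamma$ and $\alpha$ Lip-$(\gamma-1)$ on $F$,
\[(f_*X)(\psi^*\alpha) = X(f^*(\psi^*\alpha)) = X((\psi\circ f)^*\alpha).\]
Applying condition (2) for $X$ to the map $\psi\circ f:M\to F$ and using $X(f^*d\psi) = (f_*X)(d\psi)$ together with $(\psi\circ f)(x) = \psi(f(x))$ would then give
\[X((\psi\circ f)^*\alpha) = \int \alpha(\psi(f(x)),(f_*X)(d\psi))\, d(f_*X)(d\psi),\]
which is exactly the consistency property required of $f_*X$.

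The main obstacle is the legitimacy of invoking condition (2) for $X$ with the map $\psi\circ f$: Lip-$\gamma$ regularity is preserved by the composition (this can be extracted from the hypothesis on $f$, since controlling pullback of one-forms encodes the chain-rule behaviour needed to compose with $\psi$), but compact support is not preserved, as $\mathrm{supp}(\psi\circ f)\subseteq f^{-1}(\mathrm{supp}\,\psi)$ is only closed in general. The remedy is to localize: the control $\omega_X$ confines the ``active region'' of $X$ to a compact subset of $M$ (a strategy foreshadowed in the authors' discussion of the one-to-one correspondence on $V$), and a Lip-$\gamma$ cutoff function on $M$ -- available since $M$ is a Lip-$\gamma_0$ manifold with $\gamma_0\geq\gamma$ -- allows one to replace $\psi\circ f$ by a genuinely compactly supported Lip-$\gamma$ map agreeing with it on this region. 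Since $X$ only ``sees'' the one-form near the trace of the rough path, the value of $X$ is unchanged, and the argument above goes through verbatim.
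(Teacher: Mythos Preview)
The paper does not supply a proof of this lemma: it sits in Section~2, which the authors explicitly present as a review of results from \cite{TCL} (Cass--Litterer--Lyons), and the statement is recorded there without argument. So there is nothing in the paper to compare your proof against directly.

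On its own merits your outline is the natural one and is essentially correct. Conditions (1) and (3) are handled exactly as they should be. For condition (2), reducing via $(\psi\circ f)^*\alpha = f^*(\psi^*\alpha)$ and $f^*(d\psi)=d(\psi\circ f)$ to an application of property (2) of $X$ with the map $\psi\circ f$ is the right move, and you correctly isolate the two obstructions. Two small comments. First, your justification that $\psi\circ f$ is $\textrm{Lip}$-$\gamma$ ``from the pullback hypothesis'' is workable but indirect: applying the hypothesis to the one-form $d\psi$ gives $\|d(\psi\circ f)\|_{\textrm{Lip}-(\gamma-1)}\leq C_f\|d\psi\|_{\textrm{Lip}-(\gamma-1)}$, and together with the boundedness of $\psi$ (hence of $\psi\circ f$) this does yield $\textrm{Lip}$-$\gamma$; alternatively, and more transparently, one uses directly that $f$ is $\textrm{Lip}$-$\gamma_0$ and invokes the composition estimate (the manifold analogue of Theorem~\ref{Compolipfunc}). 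Second, the cutoff manoeuvre you propose is exactly what is needed, but to make it rigorous one must know that modifying $\psi\circ f$ away from the ``active region'' of $X$ leaves both $X((\psi\circ f)^*\alpha)$ and $X((\psi\circ f)_*)$ unchanged; this is precisely the content of the support machinery and the compactness theorem for $\mathrm{supp}(X)$ recorded immediately after this lemma in the paper, so the argument is circularity-free provided one cites those facts (or their proofs in \cite{TCL}) rather than the lemma itself.
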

	The next proposition shows that there exists a particular class of Lipschitz maps that induce pushforwards of rough paths. In particular, we can ascertain that the pushforwards of rough paths by coordinate maps or transition maps are also rough paths:
	\begin{Prop}\label{WellControlPshFwd}\cite{CLLy}\footnote{Following \cite{Boutaib}, we correct the exponent in the inequality compared to the result that appeared in \cite{CLLy} and drop furthermore the dependence on the dimension of the manifold. Said inequality can also be made sharper using improved estimates in \cite{Boutaib}.} Let $\gamma_0\geq \gamma>1$ and let $M$ be a Lip-$\gamma_0$ manifold and $W$ be a normed vector space. Let $f:M\to W$ be a $\textrm{Lip}-\gamma$ map and $\alpha$ be a $\textrm{Lip}-(\gamma-1)$ Banach space valued one-form on $W$ (or defined on a subset of $W$ containing $f(M)$). Then $f^*\alpha$ is Lip-$(\gamma-1)$ and there exists a constant $C_\gamma$ depending only on $\gamma$ such that:
	\[\|f^*\alpha\|_{\textrm{Lip}-(\gamma-1)}\leq C_\gamma\|\alpha\|_{\textrm{Lip}-(\gamma-1)}\|f\|_{\textrm{Lip}-\gamma}\max(\|f\|^{\gamma-1}_{\textrm{Lip}-\gamma},1)\]
	\end{Prop}
	Like in the classical case, we can define the concatenation of two rough paths. In the context of manifolds, this is important on its own since one usually works locally on coordinate domains to solve ordinary or rough differential equations before attempting to make sense of a global solution via a concatenation procedure:
	\begin{Def}\cite{CLLy} Let $\gamma_0> p\geq 1$ and $s\leq u\leq t$. Let $M$ be a $\textrm{Lip}-\gamma_0$ manifold. Let $\mathbb{X}$ (respectively $\mathbb{Y}$) be a $p$-rough path on $M$ over $[s,u]$ (resp. $[u,t]$). We define the concatenation of $\mathbb{X}$ and $\mathbb{Y}$, denoted by $\mathbb{X}*\mathbb{Y}$, to be the functional over $[s,t]$ mapping every Banach space-valued Lip-$(\gamma-1)$ compactly supported one-form $\alpha$ (for every $\gamma_0\geq \gamma> p$) to the classical rough path $\mathbb{X}(\alpha)*\mathbb{Y}(\alpha)$.
	\end{Def}
	Unlike the classical case, the concatenation of two rough paths on a manifold is not necessarily a rough path. This is due to the fact that rough paths on a manifold come attached with a starting point and that we have no natural notion of translation. Therefore, for this concatenation to be a rough path, we have to make sure that the two rough paths in question have starting and ``ending" points that agree in the following sense:
	\begin{Def} Let $\gamma_0> p\geq 1$ and $s\leq t$. Let $M$ be a $\textrm{Lip}-\gamma_0$ manifold. Let $\mathbb{X}$ be a $p$-rough path on $M$ over $[s,t]$ with starting point $x$ and let $y\in M$. We say that $\mathbb{X}$ has an end point consistent with $y$ if for every Banach space-valued compactly supported Lip-$\gamma$ map $f$ on $M$ (for every $\gamma_0\geq \gamma> p$), we have:
	\[f(x)+\mathbb{X}(f_*)^1_{s,t}=f(y)\]
	\end{Def}
	In this case, we can check the consistency condition for the concatenation of two rough paths and prove that it is also a rough path:
	\begin{Prop}\cite{CLLy} Let $\gamma_0> p\geq 1$ and $s\leq u\leq t$. Let $M$ be a $\textrm{Lip}-\gamma_0$ manifold. Let $\mathbb{X}$ (respectively $\mathbb{Y}$) be a $p$-rough path on $M$ over $[s,u]$ (resp. $[u,t]$) with starting point $x$ (resp. $y$). We assume that $\mathbb{X}$ has an end point consistent with the starting point of $\mathbb{Y}$. Then $\mathbb{X}*\mathbb{Y}$ defines a $p$-rough path on $M$ over $[s,t]$ with $x$ as a starting point.
	\end{Prop}
	Well-defined concatenations of rough paths constitute an associative operation:
	\begin{lemma}\cite{CLLy} Let $\gamma_0> p\geq 1$ and $M$ be a $\textrm{Lip}-\gamma_0$ manifold. Let $\mathbb{X}$, $\mathbb{Y}$ and $\mathbb{Z}$ be $p$-rough paths on $M$ such that:\begin{itemize}
	\item $\mathbb{X}$ has an end point consistent with the starting point of $\mathbb{Y}$;
	\item $\mathbb{Y}$ has an end point consistent with the starting point of $\mathbb{Z}$.
	\end{itemize}
	Then:
	\begin{itemize}
	\item $\mathbb{X}$ has an end point consistent with the starting point of $\mathbb{Y}*\mathbb{Z}$;
	\item $\mathbb{X}*\mathbb{Y}$ has an end point consistent with the starting point of $\mathbb{Z}$;
	\item $\mathbb{X}*(\mathbb{Y}*\mathbb{Z})=(\mathbb{X}*\mathbb{Y})*\mathbb{Z}$
	\end{itemize}
	In this case, we denote $\mathbb{X}*\mathbb{Y}*\mathbb{Z}:=\mathbb{X}*(\mathbb{Y}*\mathbb{Z})$.
	\end{lemma}
	Since this notion of rough paths does not attach, for the moment, an underlying path on the manifold to a rough path, we define a notion of support based on the images of one-forms supported in all possible open sets:
	\begin{Def}\cite{CLLy} Let $\gamma_0> p\geq 1$ and $M$ be a $\textrm{Lip}-\gamma_0$ manifold. Let $\mathbb{X}$ be a $p$-rough path on $M$ with starting point $x$. 
	\begin{enumerate}
	\item For an open subset $U$ of $M$, we say that $\mathbb{X}$ misses $U$ if, for every $\gamma_0\geq \gamma> p$ and every Banach space-valued compactly supported Lip-$(\gamma-1)$ one-form $\alpha$ on $M$ with support in $U$, we have $\mathbb{X}(\alpha)=0$.
	\item We define the support of the rough path $\mathbb{X}$ as the closed set:
	\[\mathrm{supp}(\mathbb{X}):=\{x\}\bigcup \left(M-\bigcup_{\mathbb{X} \textrm{ misses } U}U\right)\]
	\end{enumerate}
	\end{Def}	
	Naturally, this notion of support is consistent with the definition of support for a classical rough path:
	\begin{Prop}\cite{CLLy} Let $\gamma_0, p \in \mathbb{R}$ such that $\gamma_0>p\geq1$ and $T>0$. Let $V$ be a finite dimensional vector space endowed with its canonical structure of a Lip-$\gamma_0$ manifold. Let $(x,X)$ be a geometric $p$-rough path on $V$ over $[0,T]$ (in the classical sense), and let $\mathbb{X}$ be the $p$-rough path associated to it in the manifold sense, i.e. for every Banach space-valued $\textrm{Lip}-(\gamma-1)$ compactly supported one-form $\alpha$ on $V$: $\mathbb{X}(\alpha)=\int\alpha(x,X)\mathrm{d}X$. Then:
	\[\textrm{supp}(\mathbb{X})=\left\{x+X^{1}_{0,t},\, t\in [0,T ]\right\}\]
	\end{Prop}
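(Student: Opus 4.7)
The plan is to prove the two inclusions separately. Write $\gamma:[0,T]\to V$ for the underlying continuous path $t\mapsto x+X^{1}_{0,t}$, so that $K:=\gamma([0,T])$ is compact and contains $x=\gamma(0)$.

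For the inclusion $\textrm{supp}(Z)\subseteq K$, I would show that $Z$ misses the open set $V\setminus K$. Given any Lip-$(\gamma-1)$ compactly supported one-form $\alpha$ on $V$ with $\textrm{supp}(\alpha)\subseteq V\setminus K$, the form $\alpha$ vanishes on an open neighborhood of $K$, so $\alpha$ and all its derivatives vanish identically on $K$. Hence every term appearing in the Riemann-type sums defining the classical rough integral $\int\alpha(x,X)\,\mathrm{d}X$ vanishes, since the integrands only involve evaluations of $\alpha$ and its derivatives at points $\gamma_{t_j}\in K$. Thus $Z(\alpha)=0$, so $V\setminus K\subseteq\bigcup_{Z\text{ misses }U}U$, giving $\textrm{supp}(Z)\subseteq\{x\}\cup K=K$.

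For the reverse inclusion $K\subseteq\textrm{supp}(Z)$, the starting point $x$ belongs to $\textrm{supp}(Z)$ by definition. Fix $y\in K\setminus\{x\}$ and an arbitrary open neighborhood $U$ of $y$; shrinking $U$ if needed, assume $x\notin\overline{U}$. Set $b:=\inf\{t\in[0,T]:\gamma(t)=y\}$. Since $\gamma^{-1}(y)$ is closed and non-empty while $\gamma(0)=x\ne y$, we have $b>0$ and $\gamma(b)=y$. By continuity of $\gamma$ and openness of $U$, there exists $a\in(0,b)$ with $\gamma([a,b])\subseteq U$; moreover $\gamma(a)\ne y$ since $\gamma$ does not take the value $y$ on $[0,b)$. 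Choose a smooth bump $f:V\to\mathbb{R}$ compactly supported in $U$ with $f\equiv 1$ on an open neighborhood of the compact set $\gamma([a,b])$, and pick $\ell\in V^{*}$ with $\ell(\gamma(b)-\gamma(a))\ne 0$, which exists because $V^{*}$ separates points. Set $\alpha:=f\cdot\ell$, a Lip-$(\gamma-1)$ one-form compactly supported in $U$.

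It then remains to verify that $Z(\alpha)^{1}_{a,b}=\ell(X^{1}_{a,b})\ne 0$, which forces $Z(\alpha)\ne 0$ and shows that $Z$ does not miss $U$. The level-1 increment $Z(\alpha)^{1}_{a,b}$ is, by definition, the increment of the classical rough integral $\int\alpha(x,X)\,\mathrm{d}X$ over $[a,b]$. Since $f\equiv 1$ and all its derivatives vanish on $\gamma([a,b])$, the higher-order correction terms in the Riemann-sum construction of the rough integral (those involving derivatives of $\alpha$ paired with $X^{i}$ for $i\geq 2$) vanish, and each Riemann-sum term reduces to $\ell(X^{1}_{t_{j},t_{j+1}})$ over any partition of $[a,b]$; the resulting sum telescopes via additivity of $X^{1}$ to $\ell(X^{1}_{a,b})$. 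Since $U$ was arbitrary, $y\in\textrm{supp}(Z)$, concluding the proof. The main obstacle lies in this last computation: carefully invoking the explicit Riemann-sum construction of the rough integral, together with the vanishing of all derivatives of $f$ on $\gamma([a,b])$, to rigorously conclude that the higher-order corrections disappear and the rough integral reduces to $\ell(X^{1}_{a,b})$.
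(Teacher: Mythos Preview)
The paper does not supply a proof for this proposition; it sits in the section reviewing results from \cite{TCL}, so there is no argument in the present text to compare against. Your two-inclusion strategy is correct and is the natural one.

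For $\textrm{supp}(Z)\subseteq K$, your reasoning is sound once made precise: the almost-multiplicative functional from which $\int\alpha(x,X)\,\mathrm{d}X$ is built involves only the values of $\alpha$ and its first $[p]-1$ derivatives at points of $K$, so if $\alpha$ vanishes on a neighbourhood of $K$ that functional is identically the unit of $T^{[p]}(F)$; the unit is already multiplicative, hence $Z(\alpha)$ is the trivial rough path.

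For $K\subseteq\textrm{supp}(Z)$ your construction works, and the computation you flag as the obstacle goes through exactly as you outline: on $[a,b]$ the one-form $f\ell$ and the constant one-form $\ell$ have identical $(\gamma-1)$-jets along $\gamma$, so their rough integrals over $[a,b]$ coincide, and the level-one rough integral of the constant one-form $\ell$ is $\ell(X^1_{a,b})$. If you want to sidestep the almost-multiplicative machinery entirely, a small variant is cleaner: take $\alpha=\mathrm{d}g$ for a compactly supported $\textrm{Lip}$-$\gamma$ bump $g$ with $\textrm{supp}(g)\subseteq U$ and $g(y)=1$; after your shrinking of $U$ so that $x\notin\overline U$, one has $g(x)=0$. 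For bounded-variation paths the identity $\bigl(\int\mathrm{d}g(x,S_{[p]}(x))\,\mathrm{d}S_{[p]}(x)\bigr)^1_{s,t}=g(\gamma_t)-g(\gamma_s)$ is just the fundamental theorem of calculus, and it passes to all geometric $p$-rough paths by the continuity of $I_{\mathrm{d}g}$ in Theorem~\ref{OneFormRP}. Taking $s=0$, $t=b$ yields $Z(\mathrm{d}g)^1_{0,b}=g(y)-g(x)=1\neq0$ with no remainder analysis required.
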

	Like in the classical case, the support of a rough path on a manifold can be shown to be compact:
	\begin{theo}\label{SupportMRP}\cite{CLLy} Let $\gamma_0> p\geq 1$. Let $M$ be a $\textrm{Lip}-\gamma_0$ manifold and $\mathbb{X}$ be a $p$-rough path on $M$. Then the support of $\mathbb{X}$ is compact.
	\end{theo}
	The claim of the previous theorem can be shown by writing a rough path on a manifold as the concatenation of rough paths that have their support contained in the domain of only one chart at a time as described in the following theorem. This paper will generalise this decomposition of rough paths on manifolds into a collection of classical ``local'' rough paths that are compatible in a certain sense.
	\begin{theo}\label{LocalSequenceMRP}\cite{CLLy} Let $\gamma_0> p\geq 1$ and $T\geq 0$. Let $M$ be a $\textrm{Lip}-\gamma_0$ manifold. Let $\mathbb{X}$ be a $p$-rough path on $M$ defined over an interval $[0,T]$. Then there exists a subdivision $D=(s_i)_{0\leq i \leq N}$ of $[0,T]$ and a collection $(\mathbb{X}^i,x_i, (\phi_i,U_i))_{1\leq i \leq N}$ such that, for every $i\in [\![1,N]\!]$:
	\begin{enumerate}
		\item $\mathbb{X}^i$ is a $p$-rough path on $M$ defined over $[s_{i-1},s_i]$ with starting point $x_i$.
		\item (if $i<N$) $x_{i+1}$ is consistent with the endpoint of $\mathbb{X}^i$.
		\item $(\phi_i,U_i)$ is a Lip-$\gamma_0$ chart on $M$ such that
		\[\textrm{supp}(\mathbb{X}^i)\subseteq U_i\]
		and such that for every Banach space-valued Lip-$(\gamma-1)$ one-form $\alpha$ defined on $M$ (where $\gamma_0\geq \gamma > p$) and compactly supported Lip-$(\gamma-1)$ one-form $\xi_{\alpha}$ defined on $\mathbb{R}^d$ which agrees with $({\phi_i}_{|U_i}^{-1})^*(\alpha)$ on $B(0,1)$, we have:
		\[\mathbb{X}^i(\alpha)=((\phi_i)_*\mathbb{X})_{[s_{i-1},s_i]}(\xi_{\alpha})\]
		\item $\mathbb{X}=\mathbb{X}^1*\cdots*\mathbb{X}^N$.
	\end{enumerate}
	$(\mathbb{X}^i,x_i, (\phi_i,U_i))_{1\leq i \leq N}$ is called a localising sequence for $\mathbb{X}$.
	\end{theo}
	\section{Local rough paths}
	\subsection{Intervals}
	We briefly present here some basic topological properties of covers of intervals which will be of use when studying rough paths locally.
	\begin{Def} Let $M$ be a topological space. Let $J$ be a subset of $M$ and $(K_i)_{i\in I}$ be a collection of subsets of $M$.
		\begin{enumerate}
		\item $(K_i)_{i\in I}$ is said to be locally finite if for each $x\in M$ there exists a neighbourhood $U_{x}$ of $x$ such that at most finitely many $K_i$'s have non-empty intersection with $U_{x}$.
		\item $(K_i)_{i\in I}$ is said to cover $J$ (or is a cover of $J$) if $J\subseteq \cup _{i\in I}K_i$.
		\item $(K_i)_{i\in I}$ is called a compact cover of $J$ if it is locally finite, covers $J$ and for $i\in I$, $K_i$ is compact subset of $J$.
		\end{enumerate}
	\end{Def}
	The proof of the following lemma is straightforward:
	\begin{lemma}\label{ExistenceCover} Each interval $J$ of $\mathbb{R}$ admits a compact cover.
	\end{lemma}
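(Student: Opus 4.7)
The plan is to construct, for an arbitrary interval $J\subseteq\mathbb{R}$, a strictly increasing sequence of ``breakpoints'' $(t_n)_{n\in I}$ (with $I$ equal to $\mathbb{Z}$, $\mathbb{N}$, $-\mathbb{N}$, or a finite set, depending on the endpoint type of $J$) that exhausts $J$, and then take the cover to consist of the compact intervals $K_n:=[t_n,t_{n+1}]$ for consecutive indices. The compact case $J=[a,b]$ is trivial, since the singleton family $\{[a,b]\}$ already works.

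For a general interval, I would pick any interior point $c\in J$ and set $t_0=c$. To the right, I build a strictly increasing sequence $(t_n)_{n\geq 0}$ of points of $J$ converging to $\sup J$ (taking $t_n\to+\infty$ when $\sup J=+\infty$, and truncating at $\sup J$ when $\sup J\in J$). Symmetrically, I build $(t_n)_{n\leq 0}$ in $J$ strictly decreasing with $t_n\to\inf J$. Adjusting the index set $I$ according to the type of endpoints then yields the family $K_n=[t_n,t_{n+1}]$.

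The three required properties follow essentially by inspection: each $K_n$ is a closed bounded interval, hence compact; for any $x\in J$ the breakpoints squeeze $x$ between two consecutive values $t_n\leq x\leq t_{n+1}$, so $x\in K_n$ and the family covers $J$; and if $x\in K_n$, then the neighbourhood $(t_{n-1},t_{n+2})\cap J$ of $x$ in $J$ meets only $K_{n-1}$, $K_n$, and $K_{n+1}$, which gives local finiteness.

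I do not anticipate any real obstacle, as the lemma is elementary; the only mild nuisance is the case analysis on the type of endpoints. One way to sidestep it entirely is to note that every interval is homeomorphic (indeed, order-isomorphic) to one of the four model intervals $\mathbb{R}$, $[0,\infty)$, $[0,1]$, $[0,1)$, and then to pull back the obvious explicit compact cover of each model (for instance $\{[n,n+1]\}_{n\in\mathbb{Z}}$ on $\mathbb{R}$) through the homeomorphism, since compactness, covering, and local finiteness are all preserved under homeomorphisms.
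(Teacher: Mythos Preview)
Your proposal is correct and complete. The paper itself does not supply a proof of this lemma, declaring it ``straightforward'' and moving on, so there is no argument to compare against; your explicit construction via breakpoints (together with the alternative shortcut through the four model intervals) is precisely the kind of elementary verification the authors evidently had in mind.
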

	\begin{lemma}\label{FiniteCompactCover} Let $M$ be a topological space, $J$ be a compact subset of $M$ and $(K_i)_{i\in I}$ be a locally finite collection of compact sets that cover $J$ such that all the $K_i$'s intersect $J$. Then $I$ is finite.
	\end{lemma}
	\begin{proof}
	For every $x\in J$, let $U_x$ be an open neighbourhood of $x$ that intersects only finitely many of the $K_i$'s. Then $(U_x)_{x\in J}$ is an open cover of $J$. As $J$ is compact, there exists a finite subset $J_0$ of $J$ such that $\mathcal{J}_0=\{U_x; x\in J_0\}$ covers $J$. As every $K_i$ intersects $J$, and hence an element in the finite set $\mathcal{J}_0$, and since every element of $\mathcal{J}_0$ intersects only finitely many of the $K_i$'s, $I$ must be finite.
	\end{proof}
	\begin{Cor}\label{CptCoverCountable}
	Let $J$ be an interval and $(K_i)_{i\in I}$ be a compact cover for $J$. Then $I$ is countable. Moreover, if $J$ is compact, then $I$ is finite.
	\end{Cor}
	\begin{proof}
	Let $(J_n)_{n\in \mathbb{N}}$ be a non-decreasing sequence of compact intervals (in the sense of inclusion) such that $J=\cup_n J_n$. Let $n\in \mathbb{N}$ and define:
	\[I_n=\{i\in I;\quad K_i\cap J_n \neq \varnothing \}\]
	Then $(K_i)_{i\in I_n}$ is a locally finite collection of compact sets that cover $J_n$ such that all the $K_i$'s intersect $J_n$. By lemma \ref{FiniteCompactCover}, $I_n$ is finite. Note now that $(I_n)_{n\in \mathbb{N}}$ is a non-decreasing sequence of finite subsets of $I$. Moreover, $I=\cup_n I_n$. Hence, $I$ is countable.\\
	The case when $J$ is compact is covered by lemma \ref{FiniteCompactCover}.
	\end{proof}
	\begin{lemma}\label{CoverofCover}
	If $(K_i)_{i\in I}$ is a compact cover of an interval $J$, and if for each $i\in I$, $(K_{j})_{j\in I_i}$ is a compact cover of $K_i$ then $\{K_{j}| j\in I_i, i\in I\}$ is also a compact cover of $J$.
	\end{lemma}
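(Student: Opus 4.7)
The plan is to verify the three defining properties of a compact cover in turn, with local finiteness being the only point that requires real work.

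First, each element of the collection $\{K_{i_j}\mid j\in I_i,\ i\in I\}$ is compact by hypothesis, and the covering property is immediate: since $K_i=\bigcup_{j\in I_i}K_{i_j}$ for each $i$, we get $J=\bigcup_{i\in I}K_i=\bigcup_{i\in I}\bigcup_{j\in I_i}K_{i_j}$.

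The real work lies in showing local finiteness. Fix $x\in J$. Since $(K_i)_{i\in I}$ is locally finite, pick an open neighbourhood $V_x$ of $x$ in $J$ which meets only finitely many of the $K_i$'s, say $K_{i_1},\ldots,K_{i_n}$. The key sub-claim is that for each $k\in\{1,\ldots,n\}$, only finitely many of the sets $(K_{i_k\,j})_{j\in I_{i_k}}$ are non-empty. This is the standard observation that a locally finite family of (non-empty) subsets of a compact space is necessarily finite: for every $y\in K_{i_k}$ there is a neighbourhood $W_y$ of $y$ in $K_{i_k}$ meeting only finitely many $K_{i_k\,j}$, compactness of $K_{i_k}$ extracts a finite subcover $W_{y_1},\ldots,W_{y_r}$, and then at most finitely many $K_{i_k\,j}$ intersect $K_{i_k}=\bigcup_\ell W_{y_\ell}$; since each $K_{i_k\,j}\subseteq K_{i_k}$ is non-empty by assumption, this bounds the number of indices $j$. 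Now $V_x$ meets no $K_i$ with $i\notin\{i_1,\ldots,i_n\}$, hence meets no $K_{i\,j}$ with such $i$ either (as $K_{i\,j}\subseteq K_i$); and for the remaining indices $i\in\{i_1,\ldots,i_n\}$ only finitely many $K_{i\,j}$ exist by the sub-claim. Thus $V_x$ meets only finitely many members of the new collection, which is exactly local finiteness at $x$.

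The main (and essentially only) obstacle is the sub-claim about compact spaces. It is phrased in Lemma \ref{CptCoverCountable} only for intervals, but the argument needed here is the simpler standard one recalled above and does not require $K_{i_k}$ to be an interval; so the proof proceeds cleanly without any appeal to that lemma, using only compactness of the $K_i$'s and local finiteness of the two given covers.
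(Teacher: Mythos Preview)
Your proof is correct and follows essentially the same structure as the paper's: verify the covering identity, note compactness, then establish local finiteness by taking a neighbourhood meeting only finitely many $K_i$'s and using that each $I_i$ is finite. The only difference is that the paper invokes Lemma~\ref{CptCoverCountable} for the finiteness of $I_i$, whereas you supply the standard compactness argument directly---which, as you observe, sidesteps the issue that Lemma~\ref{CptCoverCountable} is stated for intervals while $K_i$ need not be one.
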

	\begin{proof}
	First note that:
	\[\cup_{i\in I, j\in I_i} K_{j}=\cup_{i\in I}(\cup_{j\in I_i} K_{j})=\cup_{i\in I}K_i=J\]
	The $K_{j}$'s ($j\in \cup_II_i$) are all compact subsets of $J$. Let $x\in J$. Let $\mathcal{V}_{x,J}$ be a neighbourhood of $x$ in $J$ that intersects finitely many $K_i$'s ($i\in I$). As $I_i$ is finite for every $i\in I$ (corollary \ref{CptCoverCountable}), then $\mathcal{V}_{x,J}$ intersects finitely many $K_{j}$'s ($j\in \cup_II_i$).\end{proof}
	\begin{Def}
	Let $(U_i)_{i\in I}$ and $(V_j)_{j\in J}$ be two collections of sets. We say that $(V_j)_{j\in J}$ is a refinement of $(U_i)_{i\in I}$ if, for every $j\in J$, there exists $i\in I$ such that $V_j\subseteq U_i$.
	\end{Def}
	\begin{lemma}\label{CompactRefining}
	Let $\mathcal{O}=(O_i)_{i\in I}$ be a cover of an interval $J$ by open sets and $\mathcal{K}=(K_h)_{h\in H}$ be a compact cover  for $J$. Then there exists a compact cover for $J$ that is a refinement of both $\mathcal{O}$ and $\mathcal{K}$.
	\end{lemma}
	\begin{proof}
	Let $h\in H$ and $x\in K_h$. As $x\in J$, then there exists $i_x\in I$ and $\alpha_x>0$ such that $(x-\alpha_x,x+\alpha_x)\cap J\subseteq O_{i_x}$.  As $K_h$ is compact and $((x-\alpha_x/2,x+\alpha_x/2))_{x\in K_h}$ covers $K_h$, then there exists a finite subset $P\subseteq K_h$ such that $([x-\alpha_x/2,x+\alpha_x/2])_{x\in P}$ covers $K_h$. Define $I_x=[x-\alpha_x/2,x+\alpha_x/2] \cap K_h$ for every $x\in P$. Then, for every $x\in P$, $I_x$ is a compact subset of $K_h$ that is contained in $O_{i_x}$. $(I_x)_{x\in P}$ is therefore a compact cover of $K_h$.  We conclude using lemma \ref{CoverofCover}.\end{proof}
	\begin{Prop}\label{CompactSubdiv} Given any cover of a compact interval $J$ by open sets $(O_i)_{i\in I}$, there exists a (finite) subdivision $(a_j)_{0\leq j\leq n}$ of $J$ such that:
	\[\forall j\in[\![0,n-1]\!],\, \exists i\in I \textrm{ such that: } [a_j,a_{j+1}]\subseteq O_i\]
	\end{Prop}
	\begin{proof}
	By Lebesgue's number lemma, let $\delta>0$ such that for every subset $A$ of $J$ of diameter less than $\delta$ there exist $i\in I$, such that $A\subseteq O_i$. It suffices to take any subdivision of $J$ of mesh less than $\delta$ to conclude the proof.
	\end{proof}
	\begin{Prop}\label{CompactSubdivK} Given any cover of a compact interval $J$ by compact intervals $(K_i)_{i\in I}$, there exists a (finite) subdivision $(a_j)_{0\leq j\leq n}$ of $J$ such that:
	\[\forall j\in[\![0,n-1]\!],\, \exists i\in I \textrm{ such that: } [a_j,a_{j+1}]\subseteq K_i\]
	\end{Prop}
	\begin{proof}
	Without loss of generality, we may assume that $J=[0,1]$. For every $x\in [0,1)$, we claim that:
	\[\exists \varepsilon_x>0, \quad \exists i\in I: [x,x+\varepsilon_x]\subset K_i\]
	Indeed, assume the converse is true. Then there exists $x\in [0,1)$ such that:
	\[\forall n\in\mathbb{N}^*, \quad \forall i\in I, \quad \exists x_{i,n}\in \left[x,x+\frac{1}{n}\right]- K_i\]
	Define $I_0=\{i\in I, x\in K_i\}$. Then necessarily $I_0\subsetneq I$. Indeed, if $I_0=I$, we let $i_0$ be such that $1\in K_{i_0}$ and then, by convexity of $K_{i_0}$, we have $[x,1]\subset K_{i_0}$, which contradicts our assumption. Let $i^*\in I_0$. For all $n\in \mathbb{N}^*$, let $x_n$ be an element of $[x,x+\frac{1}{n}]- K_{i^*}$. Then, for all $n\in \mathbb{N}^*$ and $i\in I_0$, $x_n \neq x$ (since $x\in K_{i^*}$) and $x_n\notin K_i$ (by the same convexity argument used above). Hence, $(x_n)$ is a sequence in the compact set $\cup_{I-I_0} K_i$ converging to $x$, which leads to a contradiction. Therefore, the claim is true. For every $i\in I$, we write $K_i=[s_i,t_i]$. Let $i_0\in I$ such that $K_{i_0}$ contains a neighbourhood of $0$ with non-empty interior and for every $i\in I$ such that $t_i\neq 1$, let $r_i\in I$ be such that there exists $\varepsilon_i>0$ such that $[t_i,t_i+\varepsilon_i]\subset K_{r_i}$. We define $a_0=0$ and $a_1=t_{i_0}$. Then $a_0<a_1$ and $[a_0,a_1]\subset K_{i_0}$. We define the rest of the subdivision in a recursive way: for $q\geq 1$, given $a_q=t_{q^*}$, if $a_q=1$, then we are done, otherwise we set $a_{q+1}=t_{r_{q^*}}$ (we have then $a_q<a_{q+1}$ and $[a_q,a_{q+1}]\subset K_{r_{q^*}}$). It is clear that this procedure converges in a finite number of steps and produces the desired subdivision.
	\end{proof}
	\subsection{Locally Lipschitz maps}
	\begin{Def}  Let $\gamma >0$. Let $E$ and $F$ be two normed vector spaces, $U$ be a subset of $E$ and $f:U\rightarrow F$ be a map. We say that $f$ is locally Lipschitz-$\gamma$ if, for every $x\in U$, there exists a neighborhood $\mathcal{V}_{x,U}$ of $x$ in $U$ such that $f_{|\mathcal{V}_{x,U}}$ is Lipschitz-$\gamma$. The set of all locally $\textrm{Lip}-\gamma$ maps defined on $U$ with values in $F$ will be denoted $\textrm{Lip}_{\textrm{loc}}(\gamma, U , F)$.
	\end{Def}
	\begin{example} Lipschitz-$\gamma$ maps are obviously locally Lipschitz-$\gamma$. Continuous linear and polynomial maps are locally Lipschitz (of any degree).
	\end{example}
	\begin{Rem} One of the main reasons for introducing locally Lipschitz maps here instead of working with Lipschitz maps that are classical in the setting of geometric rough paths is to be able to use linear maps (such as the identity maps which are pivotal in the definition of categories) which are not Lipschitz in general\footnote{ a usual workaround in analysis is to use suitable Whitney extensions}. Another possible solution to this issue is the use of almost Lipschitz maps introduced in \cite{Boutaib}. The results below generalise automatically to this class of maps.
	\end{Rem}
	We recall here two important results on Lipschitz maps. They can be found for example in \cite{Boutaib} and \cite{CLLy}:
	\begin{theo}\label{Compolipfunc}\footnote{see \cite{Boutaib} for a slightly improved estimate} Let $E$, $F$ and $G$ be three normed vector spaces. Let $U$ be a subset of $E$ and $V$ be a subset of $F$. Let $\gamma\geq 1$. We assume that $(E^{\otimes k})_{k\geq1}$ and $(F^{\otimes k})_{k\geq1}$ are endowed with norms satisfying the projective property. Let $f:U \to F$ and $g:V \to G$ be two $\textrm{Lip}-\gamma$ maps such that $f(U) \subseteq V$. Then $g \circ f$ is $\textrm{Lip}-\gamma$ and there exists a constant $C_{\gamma}$ (depending only on $\gamma$) such that:
	\[\|g\circ f\|_{\textrm{Lip}-\gamma} \leq C_{\gamma}\|g\|_{\textrm{Lip}-\gamma}\max(\|f\|^{\gamma}_{\textrm{Lip}-\gamma},1)\]
	\end{theo}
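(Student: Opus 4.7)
The plan is to write $\gamma = n + \varepsilon$ with $n \in \mathbb{N}$ and $\varepsilon \in (0,1]$ as in the Lipschitz definition, and to construct the collection $((g\circ f)^0, \ldots, (g\circ f)^n)$ via Fa\`a di Bruno's formula. Concretely, for $k \in [\![0,n]\!]$ and a symmetric $k$-tensor $v = v_1 \otimes \cdots \otimes v_k$, set
\[
(g\circ f)^k(x)(v) = \sum_{\pi \in \mathcal{P}_k} g^{|\pi|}(f(x))\Bigl(\bigotimes_{B \in \pi} f^{|B|}(x)(v_B)\Bigr),
\]
where $\mathcal{P}_k$ is the set of partitions of $\{1,\dots,k\}$ and $v_B$ denotes the ordered sub-tensor indexed by $B$. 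The case $k=0$ recovers $(g\circ f)^0 = g^0 \circ f^0$. This is the only candidate consistent with the chain rule on the interior.

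The first step after this definition is the uniform bound. Using the admissibility (projective property) of the tensor norms and the hypotheses $\|g^{|\pi|}(f(x))\| \leq \|g\|_{\textrm{Lip}-\gamma}$ and $\|f^{|B|}(x)\| \leq \|f\|_{\textrm{Lip}-\gamma}$, each term in the sum above is controlled by $\|g\|_{\textrm{Lip}-\gamma}\|f\|_{\textrm{Lip}-\gamma}^{|\pi|} \leq \|g\|_{\textrm{Lip}-\gamma}\max(\|f\|_{\textrm{Lip}-\gamma}^\gamma,1)$, since $|\pi| \leq k \leq n < \gamma$ when $\gamma$ is not an integer and $|\pi| \leq \gamma$ always. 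The number of partitions is bounded by the Bell number $B_k \leq B_n$, which only depends on $\gamma$. This delivers the required supremum estimate with a constant $C_\gamma$.

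The heart of the argument is the Taylor-type expansion with a remainder of the correct order. Fix $x,y\in U$ and a $k$-tensor $v$. First apply the Lip-$\gamma$ expansion of each $g^{|\pi|}$ around $f(y)$ with increment $f(x)-f(y)$, then replace each occurrence of $f(x)-f(y)$ (and each $f^{|B|}(x)$) by its own Lip-$\gamma$ expansion around $y$ with increment $x-y$. Multiplying everything out yields a polynomial in $(x-y)$. Grouping terms whose total degree in $(x-y)$ equals exactly $j-k$ for $j \in [\![k,n]\!]$ and invoking the Fa\`a di Bruno identity on each symmetric slice, these terms reassemble precisely into $(g\circ f)^j(y)\bigl(\tfrac{v\otimes (x-y)^{\otimes(j-k)}}{(j-k)!}\bigr)$. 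Everything else is collected into the remainder $R_k^{g\circ f}(x,y)(v)$.

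The main obstacle is estimating this remainder by $C_\gamma \|g\|_{\textrm{Lip}-\gamma}\max(\|f\|_{\textrm{Lip}-\gamma}^\gamma,1)\|x-y\|^{\gamma-k}$. Each leftover term is of one of three types: (i) it contains an instance of a Lip-$\gamma$ remainder $R^g_{\cdot}(f(x),f(y))$, which is bounded by $\|g\|_{\textrm{Lip}-\gamma}\|f(x)-f(y)\|^{\gamma-|\pi|}$, and since $f$ is Lipschitz-$\gamma$ (hence in particular Lipschitz-$1$ up to a factor $\max(\|f\|_{\textrm{Lip}-\gamma},1)$), $\|f(x)-f(y)\|^{\gamma-|\pi|}$ converts to a power of $\|x-y\|$ with the correct exponent; (ii) it contains a remainder $R^f_\cdot(x,y)$, directly bounded by $\|f\|_{\textrm{Lip}-\gamma}\|x-y\|^{\gamma-|B|}$; or (iii) it is a pure Taylor term whose total degree in $(x-y)$ exceeds $n$, so it is bounded by $\|x-y\|^{n+1} \leq (\mathrm{diam}\,U)^{1-\varepsilon}\|x-y\|^{\gamma}$ on bounded pieces, and more carefully by direct inspection it already carries the exponent $\gamma - k$ after factoring out the $v$ part. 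A careful case-by-case check, using admissibility to distribute the norm over tensor products and absorbing combinatorial constants into $C_\gamma$, finishes the estimate. The case $\gamma<1$ and the degenerate boundary cases (integer $\gamma$ with $\varepsilon=1$) are handled identically, since the whole argument is driven by the defining inequalities of Lip-$\gamma$ maps.
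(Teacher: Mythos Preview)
The paper does not prove this theorem; it merely quotes it and refers to \cite{YB1} and \cite{TCL} for a proof. Your Fa\`a di Bruno approach is exactly the standard one used in those references, and the overall architecture (define the candidate derivatives via the chain-rule formula, bound them uniformly, then verify the Taylor remainder estimate) is correct.

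There is, however, a genuine gap in your treatment of the remainder, specifically in case (iii). The inequality $\|x-y\|^{n+1}\leq(\mathrm{diam}\,U)^{1-\varepsilon}\|x-y\|^{\gamma}$ presupposes that $U$ is bounded, which is nowhere assumed. The fallback sentence ``more carefully by direct inspection it already carries the exponent $\gamma-k$'' is not an argument: a pure polynomial term arising from the double expansion can have degree $d$ in $(x-y)$ as large as $(n-|\pi|)n + |\pi|n - k$, and for $\|x-y\|>1$ one has $\|x-y\|^{d}\gg\|x-y\|^{\gamma-k}$. A similar looseness appears in case (i): $\textrm{Lip}$-$\gamma$ in Stein's sense on an arbitrary set does not give $\textrm{Lip}$-$1$ with the stated constant; what one actually gets from the defining expansion together with the sup bound on $f^{0}$ is $\|f(x)-f(y)\|\leq C_{\gamma}\|f\|_{\textrm{Lip}\text{-}\gamma}\min(\|x-y\|,1)$.

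Both issues are repaired by the same device: split into the regimes $\|x-y\|\leq 1$ and $\|x-y\|>1$. For $\|x-y\|\leq 1$ your expansion goes through verbatim, since any overflow term has integer degree $d\geq n-k+1\geq n+\varepsilon-k=\gamma-k$, hence $\|x-y\|^{d}\leq\|x-y\|^{\gamma-k}$. For $\|x-y\|>1$ do not expand at all: bound $R_{k}^{g\circ f}(x,y)$ directly from its definition using the uniform estimates $\|(g\circ f)^{j}\|_{\infty}\leq C_{\gamma}\|g\|_{\textrm{Lip}\text{-}\gamma}\max(\|f\|_{\textrm{Lip}\text{-}\gamma}^{\gamma},1)$ already established, which gives a bound of order $\|x-y\|^{n-k}\leq\|x-y\|^{\gamma-k}$. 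With this split in place the proof is complete and yields the stated constant depending only on $\gamma$.
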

	\begin{lemma}\label{CaracLip} Let $n\in \mathbb{N}$, $0< \varepsilon\leq 1$ and $C\geq 0$. Let $E$ and $F$ be two normed vector spaces and $U$ be an open subset of $E$. Let $f:U\to F$ be a map and for every $k\in [\![1,n]\!]$, let $f^k:U\to \mathcal{L}(E^{\otimes k},F)$ be a map with values in the space of the symmetric $k$-linear mappings from $E$ to $F$. We consider the two following assertions:\begin{description}
	\item[(A1)] $(f,f^1,\ldots,f^n)$ is $\textrm{Lip}-(n+\varepsilon)$ and $\|f\|_{\textrm{Lip}-(n+\varepsilon)}\leq C$.
	\item[(A2)] $f$ is $n$ times differentiable, with $f^1,\ldots,f^n$ being its successive derivatives. $\|f\|_{\infty}$, $\|f^1\|_{\infty}$, $\ldots$, $\|f^n\|_{\infty}$ are upper-bounded by $C$ and for all $x,y\in U: \|f^n(x)-f^n(y)\|\leq C \|x-y\|^{\varepsilon}$.\end{description}
	Then $\mathbf{(A1)}\Rightarrow \mathbf{(A2)}$. If furthermore $U$ is convex then $\mathbf{(A1)}\Leftrightarrow \mathbf{(A2)}$.
	\end{lemma}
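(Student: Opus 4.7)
The plan is to prove the two implications separately by manipulating the Taylor-type identity appearing in the definition of Lipschitz-$(n+\varepsilon)$.

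For $(\mathbf{A1})\Rightarrow(\mathbf{A2})$, the uniform bounds on $\|f^k\|$ are part of the defining hypotheses, and the $\varepsilon$-H\"older bound on $f^n$ is the $k=n$ instance of the Taylor identity, which collapses to $f^n(x)(v)-f^n(y)(v)=R_n(x,y)(v)$. To recover differentiability, I would fix $y\in U$ and, using the openness of $U$ to let $x\to y$ freely inside $U$, apply the $k=0$ Taylor identity to write
\[f(x)-f(y)-f^1(y)(x-y)=\sum_{j=2}^{n}\frac{f^j(y)((x-y)^{\otimes j})}{j!}+R_0(x,y).\]
Every term on the right is $O(\|x-y\|^2)$ or better, so the right-hand side is $o(\|x-y\|)$, whence $f$ is Fr\'echet differentiable at $y$ with derivative $f^1(y)$. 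The same argument applied to the truncated collection $(f^k,f^{k+1},\ldots,f^n)$, which remains Lip-$(n-k+\varepsilon)$ with the same constant $C$, shows that $(f^k)'=f^{k+1}$ under the canonical identification of $\mathcal{L}(E^{\otimes k},F)$-valued maps with symmetric $(k+1)$-linear forms through differentiation.

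For $(\mathbf{A2})\Rightarrow(\mathbf{A1})$ with $U$ additionally convex, I would exploit convexity to invoke the integral form of Taylor's theorem along the segment $[y,x]\subset U$: for each $k\in[\![0,n]\!]$ and $v\in E^{\otimes k}$,
\[f^k(x)(v)=\sum_{j=k}^{n-1}\frac{f^j(y)(v\otimes(x-y)^{\otimes(j-k)})}{(j-k)!}+\frac{1}{(n-k-1)!}\int_0^1(1-t)^{n-k-1}f^n(y+t(x-y))(v\otimes(x-y)^{\otimes(n-k)})\,dt.\]
Adding and subtracting the $j=n$ term of the Taylor polynomial at $y$ (using the identity $\int_0^1(1-t)^{n-k-1}\,dt=1/(n-k)$) rewrites the tail in the form demanded by the definition, with remainder
\[R_k(x,y)(v)=\frac{1}{(n-k-1)!}\int_0^1(1-t)^{n-k-1}\bigl[f^n(y+t(x-y))-f^n(y)\bigr](v\otimes(x-y)^{\otimes(n-k)})\,dt.\]
The $\varepsilon$-H\"older bound on $f^n$ then controls the integrand and yields $\|R_k(x,y)\|\leq \tilde{C}_{n,k,\varepsilon}\,C\,\|x-y\|^{n+\varepsilon-k}$, where $\tilde{C}_{n,k,\varepsilon}$ is a finite Beta integral. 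Combining these bounds with the sup estimates gives $\|f\|_{\textrm{Lip}-(n+\varepsilon)}\leq C'$ for some $C'$ depending only on $C$, $n$ and $\varepsilon$.

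The main technical hurdle is the combinatorial bookkeeping in the second direction: the integral Taylor remainder naturally supplied by convexity must be reorganised (via the add-and-subtract trick) into the precise shape prescribed by the Lipschitz definition, so that the $R_k$'s obtained match those of $(\mathbf{A1})$. Convexity is essential there because otherwise the segment $[y,x]$ may leave $U$ and the integral becomes meaningless; dually, openness in the first direction ensures that Fr\'echet differentiability at each point of $U$ is genuinely a two-sided limit.
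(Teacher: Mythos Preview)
The paper does not supply its own proof of this lemma: it is introduced with the sentence ``We recall here two important results on Lipschitz maps. They can be found for example in \cite{YB1} and \cite{TCL}'' and is stated without argument. There is therefore no in-paper proof to compare against; your outline is the standard argument one finds in those references, and it is correct.

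One point deserves sharpening. In the direction $(\mathbf{A2})\Rightarrow(\mathbf{A1})$ you conclude only that $\|f\|_{\textrm{Lip}-(n+\varepsilon)}\leq C'$ for some $C'$ depending on $C$, $n$, $\varepsilon$, whereas the lemma, read literally, asserts the equivalence with the \emph{same} constant $C$ on both sides. In fact your Beta integral can be evaluated: for $k\leq n-1$,
\[
\frac{1}{(n-k-1)!}\int_0^1(1-t)^{n-k-1}t^{\varepsilon}\,dt
=\frac{\Gamma(\varepsilon+1)}{\Gamma(n-k+\varepsilon+1)}
=\prod_{i=1}^{n-k}\frac{1}{\varepsilon+i}\leq 1,
\]
so $\|R_k(x,y)\|\leq C\|x-y\|^{n+\varepsilon-k}$ with the original constant $C$, and for $k=n$ the bound $\|R_n(x,y)\|=\|f^n(x)-f^n(y)\|\leq C\|x-y\|^{\varepsilon}$ is the hypothesis itself. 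Hence $C'=C$ works and the quantitative equivalence holds as stated. Without this small computation your argument yields only the qualitative statement that the two notions of regularity coincide, which is weaker than what the lemma claims.
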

	Using for example lemma \ref{CaracLip}, one can easily show the following result:
	\begin{Prop} Let $n\in \mathbb{N}^*$. Let $E$ and $F$ be two normed vector spaces, $U$ be an open subset of $E$ and $f:U\rightarrow F$ be a map of class $\mathcal{C}^n$. Then $f$ is locally Lip-$n$.
	\end{Prop}
	Unlike the notion of Lipschitzness, local Lipschitzness can easily be defined recursively on open sets:
	\begin{lemma} Let $\gamma >1$. Let $E$ and $F$ be two normed vector spaces, $U$ be an open subset of $E$ and $f:U\rightarrow F$ be a differentiable map. Then its derivative $\mathrm{d}f$ is locally Lipschitz-$(\gamma-1)$ if and only if $f$ is locally Lipschitz-$\gamma$.
	\end{lemma}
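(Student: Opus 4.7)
The plan is to reduce everything to the characterisation Lemma \ref{CaracLip} by restricting to an open ball, which is both open and convex, so that the equivalence $\mathbf{(A1)}\Leftrightarrow\mathbf{(A2)}$ becomes available. Writing $\gamma=n+\varepsilon$ with $n=\lceil\gamma\rceil-1\geq 1$ and $\varepsilon\in(0,1]$, we have $\gamma-1=(n-1)+\varepsilon$, which fits the format of the Lipschitz definition. The key observation is that the successive derivatives of $\mathrm{d}f$ are just the higher derivatives of $f$ shifted by one: $\mathrm{d}^k(\mathrm{d}f)=\mathrm{d}^{k+1}f$.

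For the direction ($\Rightarrow$), I would fix $x\in U$, pick an open ball $B\subseteq U$ around $x$ on which $f$ is Lipschitz-$\gamma$, and apply the implication $\mathbf{(A1)}\Rightarrow\mathbf{(A2)}$ of Lemma \ref{CaracLip}. This yields that $f$ is $n$-times differentiable on $B$, with all of $f,\mathrm{d}f,\ldots,\mathrm{d}^n f$ uniformly bounded and $\mathrm{d}^n f$ being $\varepsilon$-Hölder. In particular $\mathrm{d}f$ is $(n-1)$-times differentiable on $B$, its successive derivatives $\mathrm{d}f,\mathrm{d}^2 f,\ldots,\mathrm{d}^n f$ are uniformly bounded, and the top derivative $\mathrm{d}^{n-1}(\mathrm{d}f)=\mathrm{d}^n f$ is $\varepsilon$-Hölder. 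Since $B$ is convex and open, the converse implication $\mathbf{(A2)}\Rightarrow\mathbf{(A1)}$ of Lemma \ref{CaracLip} applied to $\mathrm{d}f$ gives that $\mathrm{d}f$ is Lipschitz-$((n-1)+\varepsilon)=\textrm{Lip}-(\gamma-1)$ on $B$, as required.

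For the direction ($\Leftarrow$), I would again fix $x\in U$ and choose an open ball $B\subseteq U$ on which $\mathrm{d}f$ is Lipschitz-$(\gamma-1)$. By Lemma \ref{CaracLip}, $\mathrm{d}f$ is $(n-1)$-times differentiable on $B$ with $\mathrm{d}f,\mathrm{d}^2f,\ldots,\mathrm{d}^n f$ uniformly bounded and $\mathrm{d}^n f$ $\varepsilon$-Hölder. Hence $f$ is $n$-times differentiable on $B$ with the right bounds on its derivatives of orders $1,\ldots,n$, and the correct Hölder regularity for $\mathrm{d}^n f$. The only ingredient missing to invoke the converse of Lemma \ref{CaracLip} for $f$ is a uniform bound on $f$ itself. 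I would recover it by shrinking to a smaller concentric open ball $B'\subset B$ of finite radius: since $\mathrm{d}f$ is bounded on the convex set $B$, the mean value inequality shows that $f$ is Lipschitz on $B'$, hence bounded there (starting from the fixed value $f(x)$). Applying Lemma \ref{CaracLip} on the convex open set $B'$ then gives $f$ Lipschitz-$\gamma$ on $B'$.

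The only delicate point is the last one: making sure the absence of an a priori bound on $f$ in the $(\Leftarrow)$ direction does not obstruct the application of Lemma \ref{CaracLip}. This is handled cleanly by passing to a smaller ball and using that a map with bounded derivative on a convex set of finite diameter is itself bounded; everywhere else, the equivalence is a direct bookkeeping exercise on the indices of derivatives.
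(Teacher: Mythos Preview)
Your proof is correct and follows essentially the same approach as the paper: reduce to convex open balls so that both implications of Lemma~\ref{CaracLip} are available, and in the direction where only $\mathrm{d}f$ is controlled, recover a bound on $f$ itself via the mean value inequality (what the paper calls ``the fundamental theorem of calculus'') before invoking the characterisation. One cosmetic remark: your labels $(\Rightarrow)$ and $(\Leftarrow)$ are swapped relative to the statement as written (``$\mathrm{d}f$ locally Lip-$(\gamma-1)$ $\Leftrightarrow$ $f$ locally Lip-$\gamma$''), but the arguments themselves are sound.
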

	\begin{proof} Notice that, on every ball $B(x,\alpha)\subseteq U$ on which $\mathrm{d}f$ is Lipschitz-$(\gamma-1)$, one can use for example the fundamental theorem of calculus to bound $f$ and we deduce that the restriction of $f$ on this set is Lipschitz-$\gamma$ using lemma \ref{CaracLip}. The converse is obvious.\end{proof}
	Following theorem \ref{Compolipfunc}, local Lipschitzness is conserved under composition:
	\begin{Prop} Let $\gamma\in [1,+\infty[$. Let $E$, $F$ and $G$ be normed vector spaces, $U$ be a subset of $E$ and $V$ be a subset of $F$. Let $f:U\rightarrow F$ and $g:V\rightarrow G$ be two locally Lipschitz-$\gamma$ maps such that $f(U)\subseteq V$. Then $g\circ f$ is locally Lipschitz-$\gamma$.
	\end{Prop}
	Before we carry on, we need the following embedding theorem for which the complete statement and proof can be found for example in \cite{Boutaib}.
	\begin{theo}\label{EmbedLip2} Let $\gamma,\gamma'>0$ such that $\gamma'\leq \gamma$. Let $E$ and $F$ be two normed vector spaces and $U$ be a subset of $E$.  Let $f:U\to F$ be a $\textrm{Lip}-\gamma$ map. Then $f$ is $\textrm{Lip}-\gamma'$ and there exists a constant $M_{\gamma,\gamma'}$ (depending only on $\gamma$ and $\gamma'$) such that $\|f\|_{\textrm{Lip}-\gamma'}\leq M_{\gamma,\gamma'} \|f\|_{\textrm{Lip}-\gamma}$.
	\end{theo}
	Locally Lipschitz maps conserve the smoothness (in the sense of variation) of paths:	
	\begin{theo}\label{LipImagepvar} Let $p,\gamma\in [1,+\infty[$. Let $E$ and $F$ be two normed vector spaces, $U$ be a subset of $E$ and $J$ a compact interval. Let $f:U\rightarrow F$ be a locally Lip-$\gamma$ map over $U$ and $x:J\rightarrow U$ a path with finite $p$-variation. Then $f\circ x:J\rightarrow F$ is of finite $p$-variation.
	\end{theo}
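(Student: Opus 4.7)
The plan is to show that, on the compact image of $x$, the Lip-$\gamma$ hypothesis forces $f$ to be Lipschitz in the classical sense, after which the $p$-variation bound for $f\circ x$ follows trivially from that of $x$.

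First, since $x$ has finite $p$-variation it is continuous, so $K:=x(J)$ is a compact subset of $U$ and therefore has finite diameter $D:=\sup_{a,b\in K}\|a-b\|$. Writing $\gamma=n+\varepsilon$ with $n\in\mathbb{N}$ and $\varepsilon\in(0,1]$, I would use the $\textrm{Lip}-\gamma$ Taylor expansion of $f$ at order $k=0$, which for any $a,b\in U$ reads
\[
f(a)=\sum_{j=0}^{n} f^{j}(b)\frac{(a-b)^{\otimes j}}{j!}+R_{0}(a,b),\qquad \|R_{0}(a,b)\|\le \|f\|_{\textrm{Lip}-\gamma}\|a-b\|^{\gamma}.
\]
Subtracting $f(b)$, bounding each $\|f^{j}\|_\infty$ by $\|f\|_{\textrm{Lip}-\gamma}$, and using admissibility of the tensor norms to identify $\|(a-b)^{\otimes j}\|$ with $\|a-b\|^{j}$, I would deduce
\[
\|f(a)-f(b)\|\le \|f\|_{\textrm{Lip}-\gamma}\Bigl(\sum_{j=1}^{n}\frac{\|a-b\|^{j}}{j!}+\|a-b\|^{\gamma}\Bigr).
\]
Restricting to $a,b\in K$ so that $\|a-b\|\le D$, and using $\gamma\ge 1$ to bound each $\|a-b\|^{j}$ (for $j\ge 1$) by $D^{j-1}\|a-b\|$ and $\|a-b\|^{\gamma}$ by $D^{\gamma-1}\|a-b\|$, one obtains a constant $C=C(\|f\|_{\textrm{Lip}-\gamma},\gamma,D)$ such that $\|f(a)-f(b)\|\le C\|a-b\|$ for all $a,b\in K$.

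Applying this pointwise bound to $(a,b)=(x_{t},x_{s})$ gives $\|f(x_{t})-f(x_{s})\|\le C\|x_{t}-x_{s}\|$ for all $s,t\in J$. Hence for every finite subdivision $(t_{i})$ of $J$,
\[
\sum_{i}\|f(x_{t_{i+1}})-f(x_{t_{i}})\|^{p}\le C^{p}\sum_{i}\|x_{t_{i+1}}-x_{t_{i}}\|^{p}\le C^{p}\|x\|_{p,J}^{p},
\]
and taking the supremum yields $\|f\circ x\|_{p,J}\le C\|x\|_{p,J}<+\infty$.

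The single non-routine point is the conversion from $\textrm{Lip}-\gamma$ to plain Lipschitz on the bounded set $K$; its only inputs are admissibility of the tensor norms and the hypothesis $\gamma\ge 1$, and no subtle issue arises because $x$ and $f\circ x$ are compared only on the compact image $K$. In the locally-$\textrm{Lip}-\gamma$ variant suggested by the section's title, the same argument would be applied piecewise: cover $K$ by open neighbourhoods of $U$ on which $f$ is $\textrm{Lip}-\gamma$, pull them back by $x$ to get an open cover of $J$, invoke Lemma \ref{CompactRefining} and Lemma \ref{CompactSubdiv} to extract a finite subdivision $s_{0}\le s_{1}\le\cdots\le s_{N}$ of $J$ with $x([s_{k},s_{k+1}])$ contained in a single such neighbourhood, and glue the finitely many resulting finite $p$-variations via the standard $\ell^{p}$-type inequality.
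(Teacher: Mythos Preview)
Your argument is correct. For the statement as written (global $\textrm{Lip}-\gamma$ hypothesis), your route is more direct than the paper's: you extract from the Taylor expansion that $f$ is classically Lipschitz on the bounded image $K=x(J)$, and the $p$-variation bound is then immediate with the clean estimate $\|f\circ x\|_{p,J}\le C\|x\|_{p,J}$. One cosmetic point: admissibility only gives $\|(a-b)^{\otimes j}\|\le\|a-b\|^{j}$, not equality, but the inequality is in the direction you need.

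The paper's own proof is in fact written for the \emph{locally} $\textrm{Lip}-\gamma$ case (consistent with the section heading and with how the result is used in Lemma~\ref{IntegralSig}): it chooses, for each $t\in J$, a neighbourhood $L_t$ of $x_t$ on which $f$ is $\textrm{Lip}-\gamma$, pulls these back to an open cover of $J$, refines to a compact cover and then a subdivision, and bounds $\|f(x_s)-f(x_u)\|$ by a telescoping sum across the subdivision, picking up the factor $n^{p-1}$ in the control. This is exactly the strategy you sketch in your final paragraph. So your main argument handles the literal statement more simply, while your closing sketch recovers the paper's proof of the version that is actually required downstream.
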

	\begin{proof} For every $t\in J$, let $B_t$ be an open neighborhood of $x_t$ in $U$ such that $f_{|B_t}$ is Lipschitz-$1$ (following theorem \ref{EmbedLip2}); denote its Lip-$1$ norm by $M_t$. Then $(x^{-1}(B_t))_{t\in J}$ is an open cover of $J$. Let $(a_i)_{0\leq i\leq n}$ be a subdivision of $J$ such that for all $i\in[\![0,n-1]\!]$, there exists $t_i\in J$ such that $[a_i,a_{i+1}]\subseteq x^{-1}(B_{t_i})$ (proposition \ref{CompactSubdiv}) and denote $M=\max_{1\leq i\leq n}M_{t_i}$.
	
	Let $\omega$ be a control of the $p$-variation of $x$ over $J$. Let $s,u\in J$ and let $q,r\in [\![0,n]\!]$ such that $a_q\leq s\leq\cdots\leq u\leq a_r$. Then we have, from the fact that $f$ is Lip-$1$ on each of the $B_{t_{i}}$'s, for $i\in [\![0,n-1]\!]$, with a norm less than $M$:
	\[\begin{array}{rcl}
	\|f(x_s)-f(x_u)\|&\leq&\|f(x_s)-f(x_{a_{q+1}})\|+\sum\limits_{k=q+1}^{r-2}\|f(x_{a_k})-f(x_{a_{k+1}})\|+\\
					&&\|f(x_{a_{r-1}})-f(x_u)\|\\
				&\leq&M\left(\|x_s-x_{a_{q+1}}\|+\sum\limits_{k=q+1}^{r-2} \|x_{a_k}-x_{a_{k+1}}\|+\|x_{a_{r-1}}-x_u\|\right)\\
				&\leq&M \left( \omega(s,a_{q+1})^{1/p}+\sum\limits_{k=q+1}^{r-2}\omega({a_k},a_{k+1})^{1/p}+\omega(a_{r-1},u)^{1/p}\right)\\
	\end{array}\]
	which, using the super-additivity of $\omega$ and Jensen's inequality, gives the control:
	\[\|f(x_s)-f(x_u)\|^p\leq M^pn^{p-1}\omega(s,u)\]
	Therefore, $f\circ x$ is of finite $p$-variation ($M$ and $n$ do not depend on $s$ or $u$).\end{proof}
	\subsection{A study of some properties of rough paths}\label{sec:LocalRPBSpace}
	\begin{lemma}\label{LocalMultiGlobalId} Let $E$ be a vector space and $J$ be a compact interval. Let $X$ and $Y$ be two $E$-valued multiplicative functionals on $J$. Let $(K_i)_{i\in I}$ be a compact cover for $J$ by compact intervals such that, for all $i\in I$, $X_{|K_i}$ and $ Y_{|K_i}$ are equal. Then $X$ and $ Y$ are equal on $J$.
	\end{lemma}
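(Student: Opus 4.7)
The plan is to reduce the equality $X=Y$ on $\Delta_J$ to a statement on compact subintervals, on which the multiplicativity of $X$ and $Y$ lets us split any increment into finitely many increments that each lie in some $K_i$.

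Fix an arbitrary $(s,t)\in\Delta_J$; it suffices to prove $X_{s,t}=Y_{s,t}$. First, I would form the restricted collection $\tilde{K}_i:=K_i\cap[s,t]$, discarding the empty ones. The sets $\tilde{K}_i$ are compact (closed subsets of a compact set) and their union is $[s,t]$; local finiteness transfers from $J$ to $[s,t]$, since for every $x\in[s,t]$ a neighbourhood of $x$ in $J$ meeting only finitely many $K_i$ intersects $[s,t]$ in a neighbourhood of $x$ in $[s,t]$ meeting only finitely many $\tilde{K}_i$. Hence $(\tilde{K}_i)_{i}$ is a compact cover of $[s,t]$ in the sense of the previous section.

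Next, I would invoke Lemma \ref{CompactSubdiv} applied to $(\tilde{K}_i)_i$ on $[s,t]$ to produce a finite subdivision $s=a_0\leq a_1\leq\cdots\leq a_n=t$ with the property that, for each $j\in[\![0,n-1]\!]$, there exists $i_j\in I$ with $[a_j,a_{j+1}]\subseteq K_{i_j}$. In particular $(a_j,a_{j+1})\in\Delta_J$ and both endpoints lie in $K_{i_j}$, so by assumption $X_{a_j,a_{j+1}}=Y_{a_j,a_{j+1}}$. Using the multiplicativity property of $X$ and $Y$ (applied $n-1$ times),
\[
X_{s,t}=X_{a_0,a_1}\otimes\cdots\otimes X_{a_{n-1},a_n}=Y_{a_0,a_1}\otimes\cdots\otimes Y_{a_{n-1},a_n}=Y_{s,t},
\]
which gives the conclusion since $(s,t)$ was arbitrary.

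There is no real obstacle here: everything reduces to the topological preparation of the previous section. The only small point to check carefully is that the restriction of the cover to $[s,t]$ remains a compact cover (covering, compactness of the pieces, and local finiteness), all of which are immediate. Once Lemma \ref{CompactSubdiv} is applied, multiplicativity finishes the argument in one line.
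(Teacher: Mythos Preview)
Your proof is correct and follows essentially the same route as the paper: restrict the compact cover to $[s,t]$, apply Lemma~\ref{CompactSubdiv} to obtain a subdivision subordinate to the cover, and then use multiplicativity to conclude. You are slightly more explicit than the paper in checking that the restricted family is again a compact cover and in citing Lemma~\ref{CompactSubdiv}, but the argument is the same.
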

	\begin{proof} Let $(a,b)\in\Delta_J$. Then $(K_i\cap[a,b])_{i\in S}$, where $S=\{i\in I|K_i\cap[a,b]\neq\varnothing\}$, is a compact cover for $[a,b]$ by compact intervals. Let $(a_j)_{0\leq j \leq n}$ be a subdivision of $[a,b]$ such that for all $j\in [\![0,n-1]\!]$, there exists $i\in S$ such that $[a_j,a_{j+1}]\subseteq K_i$ (proposition \ref{CompactSubdivK}). Then, by assumption: $\forall j\in [\![0,n-1]\!]\; X_{a_j,a_{j+1}}=Y_{a_j,a_{j+1}}$. Therefore:
	\[X_{a_0,a_{1}}\otimes \cdots \otimes X_{a_{n-1},a_{n}}=Y_{a_0,a_{1}}\otimes \cdots \otimes Y_{a_{n-1},a_{n}}\]
	Which, by using the multiplicativity of $X$ and $Y$, gives: $X_{a,b}=Y_{a,b}$. Since this holds for all $(a,b)\in\Delta_J$, then $X=Y$.\end{proof}
	\begin{lemma}\label{LocalMultiGlobalIdII} Let $E$ be a vector space and $J$ be a compact interval. Let $(K_i)_{i\in I}$ be a compact cover for $J$ by compact intervals. Let $(X_i)_{i\in I}$ be a collection of $E$-valued multiplicative functionals such that, for all $i\in I$, $X_i$ is defined over $K_i$ and for $i,j\in I$ such that $K_i\cap K_j\neq \varnothing$, we have ${X_i}_{|K_i\cap K_j}={X_j}_{|K_i\cap K_j}$. Then there exists a unique multiplicative functional $X$ defined over $J$ such that, for all $i\in I$, $X_{|K_i}=X_i$.
	\end{lemma}
	\begin{proof} Let $(a_j)_{0\leq j \leq n}$ be a subdivision of $J$ and $(i_j)_{0\leq j \leq n}$ be a finite sequence of elements of $I$ such that for all $j\in [\![0,n-1]\!]$, we have $[a_j,a_{j+1}]\subseteq K_{i_j}$. Let $(s,t)\in \Delta_J$ and let $q,r\in [\![1,n-1]\!]$ be such that:
	\[a_{q-1}\leq s\leq a_q\leq \cdots \leq a_r \leq t \leq a_{r+1}\]
	 We set:
	\[X_{s,t}=X_{i_{q-1}}(s,a_{q})\otimes \cdots \otimes X_{i_{r}}(a_{r},t)\]
	It is an easy exercise to check that $X$ defines a multiplicative functional satisfying the requirements of the statements. The uniqueness of such functional is a consequence of lemma \ref{LocalMultiGlobalId}.
	\end{proof}
	Before we make sure that the integral of a geometric rough path against a sufficiently smooth locally Lipschitz one-form is indeed well defined, we need the following lemma quantifying the distance over an interval in the variation topology between two rough paths given their distance on a subdivision of that interval.
	\begin{lemma}\label{CprRPDistSubdiv} Let $E$ be a Banach space, $p\in [1,+\infty[$ and $T\geq 0$. Let $Y$ and $Z$ be two continuous maps from $\Delta_{[0,T]}$ to $T^{[ p]}(E)$ with finite $p$-variation. Given a subdivision $\mathcal{D}=(s_i)_{0\leq i\leq r}$ of $[0,T]$, then we have:
		\[\tilde{d}_p(Y,Z)\leq 3^{1-1/p}\max_{1\leq j\leq[p]}\|(\tilde{d}_p^{[0,s_{1}]}(Y,Z), \cdots, \tilde{d}_p^{[s_{r-1},T]}(Y,Z))\|_{l_j}\]
		where, for a finite sequence $x=(x_1,x_2,\ldots,x_n)$ and $j>0$, we define:
		\[\|x\|_{l_j}=\left(\sum_{i=1}^n |x_i|^j\right)^{1/j}\]
	\end{lemma}
	\begin{proof}
	Let $j\in[\![1,[p]]\!]$. For $(s,u)\in \Delta_{[0,T]}$, define $V_{s,u}=Y^j_{s,u}-Z^j_{s,u}$. Let $\Delta=(t_i)_{0\leq i \leq q}$ be a subdivision of $[0,T]$. Let $i\in[\![0,q-1]\!]$. Define the subdivision $\mathcal{D}\cap[t_i,t_{i+1}]$ of $[t_i,t_{i+1}]$ to be $(m_l^i):=(t_i, s_{\tilde{r}_i},\ldots, s_{\tilde{r}_i+n_i}, t_{i+1})$, where $\tilde{r}_i$ and $n_i$ are such that (if they exist) $s_{\tilde{r}_i-1}<t_i\leq s_{\tilde{r}_i}$ and $s_{\tilde{r}_i+n_i}\leq t_{i+1}<s_{\tilde{r}_i+n_i+1}$. Then we have:
	\[\begin{array}{rcl}
	\|V_{t_i,t_{i+1}}\|^{p/j}&\leq&\left(\|V_{t_i,s_{\tilde{r}_i}}\|
							+ \sum\limits_{l=\tilde{r}_i}^{\tilde{r}_i+n_i} \tilde{d}_p^{[s_l,s_{l+1}]}(Y,Z)^{j}
							+\|V_{s_{\tilde{r}_i+n_i},t_{i+1}}\|\right)^{p/j}\\
							&\leq& 3^{p/j-1}\left(\|V_{t_i,s_{\tilde{r}_i}}\|^{p/j}
							+ \left(\sum\limits_{l=\tilde{r}_i}^{\tilde{r}_i+n_i} \tilde{d}_p^{[s_l,s_{l+1}]}(Y,Z)^{j}\right)^{p/j}
							+\|V_{s_{\tilde{r}_i+n_i},t_{i+1}}\|^{p/j}\right)\\
	\end{array}\]
	Using the inequality $a^{\alpha}+b^{\alpha}\leq (a+b)^{\alpha}$, for $a,b\geq 0$ and $\alpha\geq 1$ after summing over all $i$'s, we get:
	\[\sum\limits_{\Delta}\|V_{t_i,t_{i+1}}\|^{p/j}
		\leq 3^{p/j-1}\left(\sum\limits_{i=0}^{r-1} \tilde{d}_p^{[s_i,s_{i+1}]}(Y,Z)^{j}\right)^{p/j}
		=3^{p/j-1}\|(\tilde{d}_p^{[0,s_{1}]}(Y,Z), \cdots, \tilde{d}_p^{[s_{r-1},T]}(Y,Z))\|_{l_j}^p\]
		which gives the result.
	\end{proof}
	We can now show that the integral of a locally Lipschitz one-form along geometric rough paths is, as expected, well defined and continuous when varying the path.
	\begin{theo} \label{ContIntegral} Let $p,\gamma\in [1,+\infty[$ such that $\gamma>p$ and $T\geq 0$. Let $E$ and $F$ be two Banach spaces and $U$ be an open subset of $E$. There exists a unique map:
	\[\mathcal{J}:\textrm{Lip}_{\textrm{loc}}(\gamma-1,U, \mathcal{L}(E,F))\times G\Omega_p^{[0,T]}(U;E) \longrightarrow G\Omega_p^{[0,T]}(F)\]
	such that if $\alpha:U\rightarrow\mathcal{L}(E,F)$ is a Lip-$(\gamma-1)$ one-form and $(x,X)$ is a local geometric $p$-rough path in $U$ defined over an interval $[s,t]\subseteq [0,T]$ then:
	\[\mathcal{J}(\alpha,(x,X))=\int{\alpha(x,X)\textrm{d}X}\]
	Moreover, for a locally Lip-$(\gamma-1)$ one-form $\alpha:U\rightarrow\mathcal{L}(E,F)$ and a subinterval $[s,t]\subseteq [0,T]$, the map:
	\[\begin{array}{rrcl}
	\mathcal{J}_{\alpha}:&(G\Omega_p([s,t];U;E),d_p)&\longrightarrow&(G\Omega_p([s,t]; F),\tilde{d}_p)\\
	&(x,X)&\mapsto& \mathcal{J}(\alpha,(x,X))\\
	\end{array}\]
	is continuous.
	\end{theo}
	\begin{proof} Let $\alpha:U\rightarrow\mathcal{L}(E,F)$ be a locally Lip-$(\gamma-1)$ one-form and $(x,X)\in G\Omega_p^{[0,T]}(U)$ defined over a compact interval $J$. Let $s,t \in \Delta_J$. For $u\in [s,t]$, let $O_u$ be an open neighbourhood of $x_u$ in $U$ such that $\alpha_{|O_u}$ is Lip-$(\gamma-1)$. As $(x^{-1}(O_u))_{s\leq u \leq t}$ is an open covering of $[s,u]$, let $(s_i)_{0\leq i\leq n}$ be a subdivision of $[s,t]$ such that for all $i\in [\![0,n-1]\!]$, there exists $u_i\in [s,t]$ such that $[s_i,s_{i+1}]\subset x^{-1}(O_{u_i})$.\\
	Let $i\in [\![0,n-1]\!]$. The rough path $(x,X)_{|[s_i,s_{i+1}]}$ takes its values in $O_{u_i}$ on which $\alpha$ is Lip-$(\gamma-1)$. Therefore, the geometric $p$-rough path $\int{\alpha(x(s_i),X)\textrm{d}X}$ is well defined over $[s_i,s_{i+1}]$. By lemma \ref{LocalMultiGlobalIdII}, there exists a unique geometric $p$-rough path $\mathcal{J}(\alpha,(x,X))$ defined over $[s,u]$ which agrees with it. This proves the existence of the map $\mathcal{J}$. The uniqueness is shown following the same reasoning and decomposition.\\
	 Let $(x,X)\in G\Omega_p([0,T]; U; E)$. For $u\in [0,T]$, let $\eta_u>0$ be such that $B(x_u,\eta_u)\subseteq U$ and $\alpha_{|B(x_u,\eta_u)}$ is Lip-$(\gamma-1)$. As before, let $(s_i)_{0\leq i\leq n}$ be a subdivision of $[0,T]$ such that for all $i\in [\![0,n-1]\!]$, there exists $u_i\in [0,T]$ such that $x([s_i,s_{i+1}])\subset B(x_{u_i},\eta_{u_i}/2)$ and denote $\eta=\min_i \eta_{u_i}$. Let $\varepsilon >0$. Let $\beta>0$ be such that for all $i\in [\![0,n-1]\!]$ and $(z,Z)\in G\Omega_p([s_i,s_{i+1}]; B(x_{u_i},\eta_{u_i}); E)$:
	 \[ d_p((x(s_i),X_{|[s_i,s_{i+1}]}),(z(s_i),Z))\leq \beta \Rightarrow 
	 \tilde{d}_p^{[s_i,s_{i+1}]}\left(\int{\alpha(x(s_i),X_{|[s_i,s_{i+1}]})\textrm{d}X},\int{\alpha(z(s_i),Z)\textrm{d}Z}\right)\leq \varepsilon\] 
	 Let $(y,Y)\in G\Omega_p([0,T]; U; E)$ such that $d_p((x(0),X),(y(0),Y))\leq \min(\beta,\eta)/4$. For $u\in [0,T]$, we have then:
	 \[\|x(u)-y(u)\|\leq \|x(0)-y(0)\|+\|X^1_{0,u}-Y^1_{0,u}\| \leq \min(\beta,\eta)/2\]
	 Let $i\in [\![0,n-1]\!]$. For $u\in [s_i,s_{i+1}]$, both $x_u$ and $y_u$ are in $B(x_{u_i},\eta_{u_i})$, hence
	 \[\tilde{d}_p^{[s_i,s_{i+1}]}(\mathcal{J}_{\alpha}(x,X),\mathcal{J}_{\alpha}(y,Y))\leq  {\varepsilon}\]
	 By lemma \ref{CprRPDistSubdiv} , we deduce that:
	 \[\tilde{d}_p(\mathcal{J}_{\alpha}(x,X),\mathcal{J}_{\alpha}(y,Y))\leq 
	 3^{1-1/p}r {\varepsilon}\]
	 from which we deduce the continuity of $\mathcal{J}_{\alpha}$ at  $(x,X)$.
	 \end{proof}
	 \begin{lemma}\label{IntegralSig} Let $p,\gamma\in [1,+\infty[$ such that $\gamma>p$. Let $E$ and $F$ be two Banach spaces and $U$ be an open subset of $E$. and $J$ an interval. Let $f:U\rightarrow F$ be a locally Lip-$\gamma$ map over $U$ and $x:J\rightarrow U$ a path with bounded variation. Then:
	 \[(f(x),\int{\textrm{d}f(x,S_{[p]}(x))\textrm{d}S_{[p]}(x)},J)=(f(x),S_{[p]}(f(x)),J)\]
	\end{lemma}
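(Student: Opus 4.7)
The plan is to reduce, via a localisation argument, to the classical chain rule for signatures of bounded variation paths when the map is globally Lip-$\gamma$, and then patch the resulting local identities together using Chen's multiplicativity property.

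First, I would reduce to the case where $J$ is compact. Both sides of the claimed identity are multiplicative functionals on $\Delta_J$: the right-hand side by Chen's theorem \ref{Chen1}, and the left-hand side by construction of the integration map. The lemma stated earlier in this section (two multiplicative functionals agreeing on a compact cover of $J$ agree on $\Delta_J$) then reduces the global statement to its validity on every compact subinterval $[a,b]\subseteq J$.

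Assume then that $J=[a,b]$ is compact. Using the local Lip-$\gamma$ hypothesis, the continuity of $x$ and lemmas \ref{CompactRefining} and \ref{CompactSubdiv}, I would produce a finite subdivision $a=s_0<s_1<\cdots<s_n=b$ together with points $t_i\in[s_i,s_{i+1}]$ and radii $r_i>0$ such that $B(x_{t_i},r_i)\subseteq U$, the map $f$ is Lip-$\gamma$ on $B(x_{t_i},r_i)$, and $x([s_i,s_{i+1}])\subseteq B(x_{t_i},r_i/3)$; this is exactly the construction opening the proof of theorem \ref{ContIntegral}. For each $i$, I would then fix a globally-defined, compactly supported Lip-$\gamma$ extension $\tilde{f}_i:E\to F$ of $f|_{B(x_{t_i},r_i/2)}$, provided by Stein's extension theorem which underlies the Lipschitz framework of this paper.

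On $[s_i,s_{i+1}]$ the one-forms $d\tilde{f}_i$ and $df$ agree on the image of $x$, so the restriction of $\int df(x,S_{[p]}(x))\,dS_{[p]}(x)$ to $[s_i,s_{i+1}]$ coincides with $\int d\tilde{f}_i(x,S_{[p]}(x))\,dS_{[p]}(x)_{|[s_i,s_{i+1}]}$ by the very construction used to define the integral of a locally Lipschitz one-form in theorem \ref{ContIntegral}. Since $\tilde{f}_i$ is globally Lip-$\gamma$, the classical chain rule for signatures of bounded variation paths yields
\[\int d\tilde{f}_i(x,S_{[p]}(x))\,dS_{[p]}(x)_{|[s_i,s_{i+1}]}=S_{[p]}(\tilde{f}_i\circ x)_{|[s_i,s_{i+1}]}=S_{[p]}(f\circ x)_{|[s_i,s_{i+1}]},\]
the second equality using that $\tilde{f}_i\circ x=f\circ x$ on $[s_i,s_{i+1}]$. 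Patching these local identities via the multiplicativity lemma yields the conclusion on $\Delta_{[a,b]}$, hence on $\Delta_J$.

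The main obstacle I anticipate is invoking the globally-Lipschitz chain rule identity for the truncated signature. At level one it is simply the ordinary Stieltjes chain rule $(f\circ x)_t-(f\circ x)_s=\int_s^t df(x_u)\,dx_u$; higher levels follow by induction on the order of the iterated integral together with Fubini for iterated Stieltjes integrals, but if no ready-made reference is available in this exact framework the identity must be written out in full. A secondary compatibility point is that the integral $\int df(x,S_{[p]}(x))\,dS_{[p]}(x)$ produced by theorem \ref{ContIntegral} really does restrict as expected to each piece of the subdivision; this should be essentially tautological, since the construction in theorem \ref{ContIntegral} uses precisely the same type of local Lip-$\gamma$ extension and then appeals to the multiplicativity-based patching of lemma \ref{localconv}.
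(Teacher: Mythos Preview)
Your argument is correct in outline, but it is substantially more elaborate than necessary, and the paper's route avoids precisely the obstacle you flag as ``main''.

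The paper does not localise, does not invoke Stein extensions, and does not prove any level-by-level chain rule. Instead it observes that both $\int df(x,S_{[p]}(x))\,dS_{[p]}(x)$ and $S_{[p]}(f\circ x)$ are multiplicative functionals of finite \emph{$1$-variation} (not merely finite $p$-variation): the first because, for a bounded-variation driver, the rough integral collapses to the ordinary Stieltjes integral $\int df(x_u)\,dx_u$; the second by theorem~\ref{DF0} applied to the BV path $f\circ x$ (theorem~\ref{LipImagepvar}). At level~$1$ both functionals give $f(x_v)-f(x_u)$, and lemma~\ref{Petit32} (uniqueness of multiplicative functionals of finite $1$-variation sharing the same level~$1$) finishes the proof in one line. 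Your induction on iterated-integral order and your appeal to a globally-Lipschitz chain rule are therefore unnecessary: the extension theorem already packages that induction for you, and local versus global Lipschitzness is irrelevant once one works at level~$1$ with the Stieltjes integral. What your approach buys is a self-contained argument that does not lean on the rough-integral construction reducing to Stieltjes on BV inputs; what the paper's approach buys is brevity and the elimination of the very ``chain rule for higher levels'' step you identified as the main obstacle.
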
	
	\begin{proof} First notice that $S_{[p]}(f(x))$ is well-defined since $f(x)$ has bounded variation by theorem \ref{LipImagepvar}. Let $s,t\in \Delta_J$:
	\begin{enumerate}
	\item As $\mathrm{d}f$ is continuous and $x$ is of bounded variation, then $\int{\mathrm{d}f(x,S_{[p]}(x))\textrm{d}S_{[p]}(x)}$ has finite 1-variation and is equal to (the signature of) the Stieltjes integral $\int{\textrm{d}f(x)\textrm{d}x}$. Therefore:
	\[\left(\int{\mathrm{d}f(x,S_{[p]}(x))\textrm{d}S_{[p]}(x)}\right)^1_{u,v}=f(x_v)-f(x_u)\]
	for all $(u,v)\in\Delta_{[s,t]}$.
	\item $S_{[p]}(f(x))$ has finite 1-variation and $S_{[p]}(f(x))^1_{u,v}=f(x_v)-f(x_u)$  for all $(u,v)\in\Delta_{[s,t]}$.
	\end{enumerate}
	Two multiplicative functionals that have finite 1-variation and which terms of the $1^{st}$ degree agree are equal by theorem \ref{ExtThRP}. Therefore, the sought identity stands.\end{proof}

	\section{Rough paths and categories}
	\subsection{Elements from category theory}
	To highlight the minimal framework on which we can define the notion of rough paths, we will be using the language of categories.	We refer for example to \cite{AHS} for the definitions below which may also be gathered in \cite{Lee}.
	\begin{Def}[Category] A category $\mathcal{C}$ is a triple $((U_i)_{i\in I},(\hom (U_i,U_j))_{i,j\in I},(\psi_{i,j,k})_{i,j,k\in I})$ satisfying the three following axioms:
	\begin{enumerate}
	\item For every $i,j \in I$, $\hom (U_i,U_j)$ is a set.
	\item For every $i,j,k\in I$, $\psi_{i,j,k}$ is a mapping from $\hom (U_i,U_j)\times \hom (U_j,U_k)$ into $\hom (U_i,U_k)$.
	\item\textbf{(Associativity)} For every $i,j,k,l\in I$, $f\in\hom (U_i,U_j)$, $g\in\hom (U_j,U_k)$ and $h\in\hom (U_k,U_l)$:
	\[\psi_{i,k,l}(\psi_{i,j,k}(f,g),h)=\psi_{i,j,l}(f,\psi_{j,k,l}(g,h))\]
	\item\textbf{(Existence of identities)} For every $i\in I$, there exists $\textrm{id}_{U_i}\in\hom (U_i,U_i)$ such that, for all $j\in I$, $g\in\hom (U_i,U_j)$ and $h\in\hom (U_j,U_i)$, we have:
	\[\psi_{i,i,j}(\textrm{id}_{U_i},g)=g \quad \textrm{and} \quad \psi_{j,i,i}(h,\textrm{id}_{U_i})=h\]
	\end{enumerate}
	In this case, for all $i,j,k\in I$, $U_i$ is called an object of $\mathcal{C}$; an element in $\hom (U_i,U_j)$ (also denoted by $\hom_{\mathcal{C}} (U_i,U_j)$) is called either an arrow, a morphism or a homomorphism in $\mathcal{C}$. The collections $(U_i)_{i\in I}$ and $(\hom_{\mathcal{C}} (U_i,U_j))_{i,j\in I}$ are respectively denoted $\mathrm{ob}(\mathcal{C})$ and $\hom (\mathcal{C})$. The binary operation $\psi_{i,j,k}$ is called a composition of morphisms and is simply denoted by $\circ$. i.e. for $i,j,k\in I$, $f\in\hom (U_i,U_j)$ and  $g\in\hom (U_j,U_k)$, $\psi_{i,j,k}(f,g)$ is denoted $g\circ f$.
	\end{Def}
	\begin{Rem} If $\mathcal{C}=((U_i)_{i\in I},(\hom_{\mathcal{C}} (U_i,U_j))_{i,j\in I},\circ)$ is a category, and with the notations of the previous definition, the associativity and the existence of identities axioms can be rewritten in the following way:
	\[h\circ (g \circ f)=(h\circ g) \circ f\]
	and
	\[g \circ\textrm{id}_{U_i}= g \quad \textrm{and} \quad \textrm{id}_{U_i}\circ h=h\]
	\end{Rem}
	\begin{examples}
	\begin{itemize}
		\item By taking a family of sets considered as objects and all maps between these sets considered as arrows and the composition of maps as binary operations we obtain a category, usually called the category of sets.
		\item Let $(G_i)_{i\in I}$ be a family of groups. For every $i,j\in I$, let $\hom (G_i,G_j)$ be the set of all group homomorphisms from $G_i$ to $G_j$. Then $\mathcal{C}=((G_i)_{i\in I},(\hom (G_i,G_j))_{i,j\in I},\circ)$ is a category (called category of groups).
		\item Let $(M_i)_{i\in I}$ be a family of topological spaces (respectively smooth manifolds). For every $i,j\in I$, let $\hom (M_i,M_j)$ be the set of all continuous maps (resp. smooth maps) from $M_i$ to $M_j$. Then $\mathcal{C}=((M_i)_{i\in I},(\hom (M_i,M_j))_{i,j\in I},\circ)$ is a category.
	\end{itemize}
	\end{examples}	
	\begin{Def} Let $\mathcal{C}_1$ and $\mathcal{C}_2$ be two categories. A functor (or a functorial rule) from $\mathcal{C}_1$ to $\mathcal{C}_2$ is a pair of mappings $\mathrm{ob}(\mathcal{C}_1) \to \mathrm{ob}(\mathcal{C}_2)$ and $\hom (\mathcal{C}_1)\to \hom (\mathcal{C}_2)$, which we will denote by the same letter $F$, satisfying:
	\begin{enumerate}
	\item For every object $U$ in $\mathcal{C}_1$, $F(U)$ is an object in $\mathcal{C}_2$;
	\item For every two objects $U$ and $V$ in $\mathcal{C}_1$ and an arrow $f\in\hom_{\mathcal{C}_1} (U,V)$, $F(f)$ is in $\hom_{\mathcal{C}_2} (F(U),F(V))$;
	\item For all objects $U,V$ and $W$ in $\mathcal{C}_1$, and arrows $f\in\hom_{\mathcal{C}_1} (U,V)$ and $g\in\hom_{\mathcal{C}_1} (V,W)$ $F(g\circ f)=F(g) \circ F(f)$ and $F(\textrm{id}_{U})=\textrm{id}_{F(U)}$.
	\end{enumerate}
	\end{Def}
	\begin{examples} \begin{itemize}
	\item Given any category, there exists a trivial functor from this category to itself preserving all objects and arrows called the identity functor.
	\item Given a category of groups $\mathcal{C}_1$, let $\mathcal{C}_2$ be a category of sets whose objects are the underlying sets to the objects of $\mathcal{C}_1$ and whose arrows are the group homomorphisms in $\mathcal{C}_1$ taken as maps between the underlying sets. The natural map associating to each object and arrow in $\mathcal{C}_1$ their set-counterparts in $\mathcal{C}_2$ is a functor. Functors constructed in a similar manner are called forgetful functors.
	\item Let $(G_i)_{i\in I}$ be a family of Lie groups. For every $i,j\in I$, let $\hom (G_i,G_j)$ denote the set of all Lie group homomorphisms from $G_i$ to $G_j$, $\mathrm{Lie}(G_i)$ denote the Lie algebra of $G_i$ and $\hom (\mathrm{Lie}(G_i),\mathrm{Lie}(G_j))$ denote the set of all Lie algebra homomorphisms from $\mathrm{Lie}(G_i)$ to $\mathrm{Lie}(G_i)$. Then 
	\[\mathcal{C}_1:=((G_i)_{i\in I},(\hom (G_i,G_j))_{i,j\in I},\circ)\]
	 and 
	 \[\mathcal{C}_2:=((\mathrm{Lie}(G_i))_{i\in I},(\hom (\mathrm{Lie}(G_i),\mathrm{Lie}(G_j)))_{i,j\in I},\circ)\] are categories. The mapping associating to each object in $\mathcal{C}_1$ its Lie algebra and to each arrow in $\mathcal{C}_1$ its pushforward at the identity defines a functor from $\mathcal{C}_1$ to $\mathcal{C}_2$:
	 \[G_i\mapsto \mathrm{Lie}(G_i);\quad \varphi\in \hom (G_i ,G_j) \mapsto \varphi_*(1_{G_i})\in \hom (\mathrm{Lie}(G_i),\mathrm{Lie}(G_j))\]
	\end{itemize}
	\end{examples}
	\subsection{A functorial rule for rough paths}
	Let $E$ be a Banach space and $1 \leq p<\gamma$. Let $(U_i)_{i\in I}$  be a family of open subsets of $E$. It is obvious that 
	\[\mathcal{C}_1=((U_i)_{i\in I},(\textrm{Lip}_{\textrm{loc}}(\gamma,U_i,U_j)))_{i,j\in I}, \circ)\]
	is a category. For $i,j \in I$, we denote by $\hom (G\Omega_p(U_i),G\Omega_p(U_j))$ the set of maps that assign in a continuous way (in the $p$-rough path topology) to each rough path in $U_i$ a rough path in $U_j$ defined over the same compact interval. It is also straightforward that:
	\[\mathcal{C}_2=((G\Omega_p(U_i))_{i\in I},(\hom (G\Omega_p(U_i),G\Omega_p(U_j)))_{i,j\in I}, \circ)\]
	defines a category. Finally for any open subsets $U$ and $V$ of $E$ and $f\in \textrm{Lip}_{\textrm{loc}}(\gamma,U,V)$, we denote by $f_*$ the following map:
	\[\begin{array}{rccl}
	f_*:&G\Omega_p(U)&\longrightarrow&G\Omega_p(V)\\
	&(x,X,J)&\longmapsto&(f(x),\int{\textrm{d}f(x,X)\textrm{d}X},J)\\
	\end{array}\]
	Recall that by theorem \ref{ContIntegral}, $f_*:G\Omega_p(J ;U; E) \rightarrow G\Omega_p(J;V;E)$ is continuous for any compact interval $J$.
	\begin{Rem} In order to use the formalism of category theory, we need the identity map between two open sets to be an arrow. This is one of the reasons we introduce locally Lipschitz maps.
	\end{Rem}
	\begin{theo} The rule that assigns to every object $U$ in $\mathcal{C}_1$ the object $G\Omega_p(U)$ and to every morphism $f\in \textrm{Lip}_{\textrm{loc}}(\gamma,U,V)$, where $U$ and $V$ are objects in $\mathcal{C}_1$, the map $f_*$, is functorial.
	\end{theo}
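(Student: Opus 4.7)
The plan is to verify the two functoriality axioms: $(\mathrm{id}_U)_* = \mathrm{id}_{G\Omega_p(U)}$ for every object $U$, and $(g\circ f)_* = g_*\circ f_*$ for every composable pair $f\in\hom_{\mathcal{C}}(U,V)$, $g\in\hom_{\mathcal{C}}(V,W)$. The map $g\circ f$ is itself locally Lipschitz-$\gamma$ by the composition lemma above, so it is indeed an arrow of $\mathcal{C}$ and its pushforward is well-defined. In both cases my strategy is a two-step argument: first establish the identity on the dense subset of truncated signatures of paths of bounded variation, then extend to all of $G\Omega_p(U)$ by continuity.

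For the dense step, the workhorse is Lemma \ref{IntegralSig}. Applied with $f=\mathrm{id}_U$ to a bounded-variation path $x:J\to U$, it gives immediately $(\mathrm{id}_U)_*(x, S_{[p]}(x), J) = (x, S_{[p]}(x), J)$, which is the identity law on signatures. For composition, Lemma \ref{IntegralSig} applied to $f$ yields $f_*(x, S_{[p]}(x), J) = (f(x), S_{[p]}(f(x)), J)$. By Theorem \ref{LipImagepvar}, $f(x)$ is itself of bounded variation, so a second application of Lemma \ref{IntegralSig} to $g$ gives $g_*(f(x), S_{[p]}(f(x)), J) = (g(f(x)), S_{[p]}(g(f(x))), J)$. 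On the other side, a single application of Lemma \ref{IntegralSig} to $g\circ f$ produces the same triple $((g\circ f)(x), S_{[p]}((g\circ f)(x)), J)$, so both expressions agree on signatures.

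For the extension step, the pushforward $f_*$ is continuous on $G\Omega_p(U)$: the trace component $x\mapsto f(x)$ is continuous, and the rough-path component is continuous by Theorem \ref{ContIntegral}, since $\mathrm{d}f$ is locally Lip-$(\gamma-1)$ whenever $f$ is locally Lip-$\gamma$. Therefore both $(g\circ f)_*$ and $g_*\circ f_*$ are continuous maps between the spaces $G\Omega_p(U)$ and $G\Omega_p(W)$. Since truncated signatures of bounded-variation paths are by definition dense in $G\Omega_p(U)$, two continuous maps that coincide on this dense subset must be equal everywhere, and the same argument disposes of the identity axiom.

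The one mild technicality is that Theorem \ref{ContIntegral} is stated on compact intervals $[0,T]$, whereas $G\Omega_p(U)$ a priori admits arbitrary intervals $J$. This is handled by Lemma \ref{localconv}: cover $J$ by a compact cover, verify the two axioms on each compact sub-interval via the argument above, and glue the local equalities into a global one. There is no substantive obstacle in the proof; the only care is purely notational bookkeeping to track that the trace $f(x)$ of $f_*(x,X,J)$ coincides with the first coordinate of both sides, which is already guaranteed at the level of signatures by Lemma \ref{IntegralSig} and is preserved under the continuous limit.
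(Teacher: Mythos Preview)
Your proposal is correct and follows essentially the same approach as the paper's own proof: verify both functoriality axioms on signatures of bounded-variation paths via Lemma~\ref{IntegralSig} (using Theorem~\ref{LipImagepvar} to ensure $f(x)$ has bounded variation), then extend by density using the continuity from Theorem~\ref{ContIntegral}. Your additional remark on the compact-interval technicality and the use of Lemma~\ref{localconv} is a detail the paper leaves implicit.
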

	\begin{proof} For every open subsets $U$, $V$ and $W$ of $E$ that are objects in $\mathcal{C}$, we need to prove the following:
	\begin{enumerate}
	\item $(Id_U)*=Id_{G\Omega_p(U)}$;
	\item $\forall f\in \textrm{Lip}_{\textrm{loc}}(\gamma,U,V), \forall g\in \textrm{Lip}_{\textrm{loc}}(\gamma,V, W): (g\circ f)_*=g_*\circ f_*$.
	\end{enumerate}
	The first assertion regarding the identity map is straightforward. Let $U$, $V$ and $W$ be open subsets of $E$ that are objects in $\mathcal{C}$. Let $f\in \textrm{Lip}_{\textrm{loc}}(\gamma,U,V)$ and $g\in \textrm{Lip}_{\textrm{loc}}(\gamma,V, W)$. Let $x$ be a $U$-valued path over a segment $J$ with bounded variation. By lemma \ref{IntegralSig}
	\[	f_*(x,S_{[p]}(x),J)=(f(x),\int{\textrm{d}f(x,S_{[p]}(x))\textrm{d}S_{[p]}(x)},J)= (f(x),S_{[p]}(f(x)),J)\]
	As $f(x)$ has bounded variation (lemma \ref{LipImagepvar}), we similarly have:
	\[g_*(f(x),S_{[p]}(f(x)),J)=(g\circ f(x),S_{[p]}(g\circ f(x)),J)\]
	and as $g\circ f$ is locally Lip-$\gamma$:
	\[(g\circ f(x),S_{[p]}(g\circ f(x)),J)=(g\circ f(x),\int{\textrm{d}(g\circ f)(x,S_{[p]}(x))\textrm{d}S_{[p]}(x)},J)\]
	Therefore $((g\circ f)_*)_{|G\Omega_1(U)}=(g_*\circ f_*)_{|G\Omega_1(U)}$. As both $(g\circ f)_*$ and $g_*\circ f_*$ are continuous in the $p$-variation metric (theorem \ref{ContIntegral}) and as $G\Omega_1(U)$ is dense in $G\Omega_p(U)$ for this metric we deduce that $(g\circ f)_*=g_*\circ f_*$.
	\end{proof}
	\subsection{A new definition of rough paths on manifolds}
	Based on our findings so far, we are now to able to give a minimal approach for defining rough paths on a manifold.
	\begin{Def}Let $\gamma\geq1$. Let $M$ be a topological manifold and $\mathcal{A}$ an atlas on $M$. We say that $(M,\mathcal{A})$ is a locally $\textrm{Lip}-\gamma$ $n$-manifold if for any two charts $(U,\phi)$ and $(V,\psi)$ in $\mathcal{A}$ such that $U\cap V\neq\varnothing$, the map $\psi\circ\phi^{-1}:\phi(U\cap V)\rightarrow\mathbb{R}^n$ is locally $\textrm{Lip}-\gamma$.
	\end{Def}
	\begin{examples}\begin{itemize}
	\item A Lip-$\gamma$ manifold is a locally Lip-$\gamma$ manifold. In particular, finite-dimensional vector spaces are locally Lip-$\gamma$ manifolds.
	\item Let $n\in\mathbb{N}^*$. A $\mathcal{C}^n$ manifold is a locally Lip-$n$ manifold.
	\end{itemize}
	\end{examples}
	\begin{Def}\label{RPDefII}
	Let $n\in\mathbb{N}^*$ and $p,\gamma\in [1,+\infty[$ such that $\gamma>p$. Let $M$ be a locally $\textrm{Lip}-\gamma$ $n$-manifold.  A local $p$-rough path on $M$ over a compact interval $J$ is a collection $(x_i,X_i,J_i,(\phi_i,U_i))_{i\in I}$ of $p$-rough paths on $\mathbb{R}^n$ satisfying the following conditions:
	\begin{itemize}
	\item $(J_i)_{i\in I}$ is a compact cover for $J$ with compact intervals;
	\item For every $i\in I$, $(\phi_i,U_i)$ is a locally $\textrm{Lip}-\gamma$ chart on $M$.
	\item $\forall i \in I:\quad (x_i,X_i,J_i)\in G\Omega_p(\phi_i(U_i))$;
	\item \textbf{(Consistency condition)} If $i,k\in I$ such that  $ J_i\cap J_k \neq\varnothing$, then we have:
	\[(\phi_k\circ\phi_i^{-1})_*(x_i,X_i,J_i\cap J_k)=(x_k,X_k,J_i\cap J_k)\]
	\end{itemize}
	\end{Def}
	We identify similar local rough paths in the following way:
	\begin{Def}\label{EquivalenceLocalRP} Let $p,\gamma\in [1,+\infty[$ such that $\gamma>p$. Let $M$ be a locally $\textrm{Lip}-\gamma$ manifold and $J$ a compact interval. Two local $p$-rough paths $A=(x_i,X_i,J_i,(\phi_i,U_i))_{i\in I}$ and $B=(x_i,X_i,J_i,(\phi_i,U_i))_{i\in K}$ on $M$ over $J$ are said to be equivalent if $A\cup B$ is also a local $p$-rough path.
	\end{Def}
	This, of course, defines an equivalence relation. We will henceforth only consider the equivalence classes associated to this relation.
	\begin{Def}\label{RPExtension}
	Let $p,\gamma\in [1,+\infty[$ such that $\gamma>p$. Let $M$ be a locally $\textrm{Lip}-\gamma$ manifold. A local $p$-rough path $(x_i,X_i,J_i,(\phi_i,U_i))_{i\in I}$ on a compact interval $J$ is said to be a $p$-rough path extension for the path $x:J\rightarrow M$ if the following holds:
	\[\forall i\in I: \quad x(J_i)\subseteq U_i \quad\textrm{ and }\quad   x_i=\phi_i \circ x_{|J_i}\]
	If such a rough path exists, we say then that $x$ admits a $p$-rough path extension.
	\end{Def}
	Following definition \ref{EquivalenceLocalRP}, we will consider that two rough path extensions of a given path to be the same if their union is a local rough path extension of that path.
	\subsection{Consistency with previous definitions}
	We make sure that our notion of rough path is compatible with the classical one:
	\begin{lemma}\label{ConsitencyLocRPRP} Let $p,\gamma\in [1,+\infty[$ such that $\gamma>p$. Let $M$ be a Lipschitz-$\gamma$ manifold and $J$ a compact interval. Let $(x_i,X_i,J_i,(\phi_i,U_i))_{i\in I}$ be a local $p$-rough path on $M$ over $J$. Then there exists a unique $p$-rough path $\mathbb{X}$ on $M$ such that:
	\[\forall i\in I: (\phi_i)_*(\mathbb{X})_{|J_i}=(x_i,X_i)\]
	\end{lemma}
	\begin{proof} By lemma \ref{LocalMultiGlobalIdII} and the definition of a rough path on a manifold, it is clear that if such a rough path exists, it is necessarily unique. Let $(a_j)_{0\leq j \leq n}$ be a subdivision of $J$ and $(i_j)_{0\leq j \leq n}$ be a finite sequence of elements of $I$ such that for all $j\in [\![0,n-1]\!]$, we have $[a_j,a_{j+1}]\subseteq J_{i_j}$. For $j\in [\![0,n-1]\!]$, we define the rough path:
	\[(y_j,Y_j)=({x_{i_j}}_{|[a_j,a_{j+1}]},{X_{i_j}}_{|[a_j,a_{j+1}]})\in G\Omega_p(U_{i_j})\]
	It is trivial then that $(y_j,Y_j,[a_j,a_{j+1}],(\phi_{i_j},U_{i_j}))_{0\leq j \leq n}$ is equivalent to $(x_i,X_i,J_i,(\phi_i,U_i))_{i\in I}$. For $j\in [\![0,n-1]\!]$, we define the p-rough path $Z_j=(\phi_{i_j}^{-1})_*(y_j(a_j),Y_j)$ on $M$ (with starting point denoted $z_j$). Let $\mathbb{X}=Z_1*\cdots Z_{n-1}$. One can check that the latter concatenation is indeed well-defined as we have, for every $j\in [\![0,n-1]\!]$ and $f$ Banach space-valued compactly supported Lip-$\gamma$ map on $M$:
	\[Z_j(f_*)^1_{a_j,a_{j+1}}=Y_j((f\circ \phi_{i_j}^{-1})_*)^1_{a_j,a_{j+1}}
	=f\circ \phi_{i_j}^{-1}(y_j(a_{j+1}))-f\circ \phi_{i_j}^{-1}(y_j(a_{j}))=f(z_{j+1})-f(z_j)\]
	By construction $\mathbb{X}$ satisfies the required conditions.
	\end{proof}
	This identification is in fact one-to-one:
	\begin{theo} Let $p,\gamma\in [1,+\infty[$ such that $\gamma>p$. Let $M$ be a Lipschitz-$\gamma$ manifold. There is a one-to-one mapping between $p$-rough paths on $M$ and local $p$-rough paths on $M$.
	\end{theo}
	\begin{proof} Without loss of generality, we consider rough paths defined over the unit interval $J=[0,1]$.
	Let $\mathbb{X}$ be a $p$-rough path in $M$ over $[0,1]$ with starting point $x$. Following theorem \ref{LocalSequenceMRP}, let $(\mathbb{X}^i,x_i, (\phi_i,U_i))_{1\leq i \leq N}$ be a localising sequence for $\mathbb{X}$ adapted to a subdivision $(s_i)_{0\leq i \leq N}$ of $[0,1]$. For $i\in [\![1,N]\!]$, $(\phi_i)_*(\mathbb{X}^i)$ identifies with a local geometric $p$-rough path in $\phi_i(U_i)$ over $[s_{i-1},s_i]$ that we denote $(y_i,Y_i)$. We define the collection $Y=(y_i,Y_i,[s_{i-1},s_i],(\phi_i,U_i))_{1\leq i \leq N}$. For $i\in [\![1,N-1]\!]$, we have, first by the identification between $(y_i,Y_i)$ and $(\mathbb{X}^i,x_i)$, then the consistency of the endpoint of $\mathbb{X}^i$ with the starting point of $\mathbb{X}^{i+1}$:
	\[\begin{array}{rcl}
	(\phi_{i+1}\circ\phi_i^{-1})_*(y_i,Y_i,\{s_i\})&=&	(\phi_{i+1}\circ\phi_i^{-1})_*(y_i(s_i),1,\{s_i\})\\
								&=&	((\phi_{i+1}\circ\phi_i^{-1})(y_i(s_i)),1,\{s_i\})\\
								&=&	((\phi_{i+1}\circ\phi_i^{-1})(\phi_i(x_i)+(\mathbb{X}^i(\phi_i^{-1})_*)^1_{s_{i-1},s_i},1,\{s_i\})\\
								&=&	(\phi_i(x_{i+1}),1,\{s_i\})\\
								&=&	(y_{i+1},Y_{i+1},\{s_i\})\\
	\end{array}\]	
	This proves that $Y$ is indeed a local $p$-rough path on $M$ over $[0,1]$. Lemma \ref{ConsitencyLocRPRP} shows that the mapping $\mathbb{X} \mapsto Y$ constructed above is onto and one-to-one.	
	\end{proof}
	\section{Coloured paths on manifolds}
	There is a general principle underlying the procedure followed above that can be used to define any notion of \emph{colouring} already existing on the Euclidean space to a manifold, assuming that we can find a suitable functorial rule. The lift of a path into a rough path can be indeed seen as a colouring: an extra bit of information that cannot necessarily be learned by looking at the base path only.

	We can define a notion of \emph{coloured} charts and atlas over any $n$-topological manifold provided that the transition maps are arrows in an appropriate category. We will call such a manifold a \emph{coloured manifold}.
	\begin{Def}\label{ColouredMan} Let $n\in\mathbb{N}^*$. Let $\mathcal{C}$ be a category whose objects are all open subsets of $\mathbb{R}^n$ and for which inclusion and restriction maps are also arrows. Let $M$ be a topological manifold and $\mathcal{A}$ an atlas on $M$. We say that $(M,\mathcal{A})$ is a coloured manifold with respect to $\mathcal{C}$ if for any two charts $(U,\phi)$ and $(V,\psi)$ in $\mathcal{A}$ such that $U\cap V\neq\varnothing$, we have $\psi\circ\phi^{-1}\in \hom_{C}(\phi(U\cap V),\psi(U\cap V))$.
	\end{Def}
	We easily retrieve some familiar constructions of manifolds using this language:
	\begin{example}\label{ExpleContinuousManifold} Let $n\in\mathbb{N}^*$. Let $\mathcal{C}_1$ be the category whose objects are all open subsets of $\mathbb{R}^n$ and whose arrows are continuous maps between these sets. Topological $n$-dimensional manifolds are exactly the coloured manifolds with respect to $\mathcal{C}_1$.
	\end{example}
	\begin{example}\label{ExpleSmoothManifold} Let $n\in\mathbb{N}^*$. Let $\mathcal{S}_1$ be the category whose objects are all open subsets of $\mathbb{R}^n$ and whose arrows are smooth maps between these sets. Smooth $n$-dimensional manifolds are the coloured manifolds with respect to $\mathcal{S}_1$.
	\end{example}
	\begin{example}\label{ExpleLocLipManifold} Let $n\in\mathbb{N}^*$ and $\gamma\geq 1$. Let $\mathcal{L}_1$ be the category whose objects are all open subsets of $\mathbb{R}^n$ and whose arrows are locally Lipschitz-$\gamma$ maps between these sets. Locally Lip-$\gamma$ $n$-dimensional manifolds are the coloured manifolds with respect to $\mathcal{L}_1$.
	\end{example}
	Suppose now that we have a notion of \emph{coloured paths} on $\mathbb{R}^n$ that have underlying base paths which we will call traces (rough paths are an example). Denote by $T(U)$ the sets of coloured paths whose traces lie in an open subset $U$ of $\mathbb{R}^n$. Denote by $\mathcal{C}$ a category as in definition \ref{ColouredMan} and let $\widetilde{\mathcal{C}}$ be a category whose objects are the sets of coloured paths with traces in open subsets of $\mathbb{R}^n$. We assume there exists a functor from $\mathcal{C}$ to $\widetilde{\mathcal{C}}$:
	\[U\mapsto T(U)\quad ; \quad f\mapsto f_* \]
	such that for each arrow $f$ between objects $U$ and $V$ of $\mathcal{C}$, the arrow $f_*$ between $T(U)$ and $T(V)$ associates to each coloured path $X$ on $U$ a coloured path on $V$ whose trace is the image by $f$ of the trace of $X$. We can now define coloured paths on a coloured manifold in the same manner as in definition \ref{RPDefII} and the existence of coloured path extensions for manifold-based paths as in definition \ref{RPExtension}.
	\begin{Def} Let $n\in\mathbb{N}^*$. Let $\mathcal{C}$ be a category whose objects are all open subsets of $\mathbb{R}^n$ and for which inclusion and restriction maps are also arrows.	Let $M$ be a coloured $n$-manifold w.r.t $\mathcal{C}$. A coloured path on $M$ over a compact interval $J$ is a collection $(X_i,J_i,(\phi_i,U_i))_{i\in I}$ of coloured paths on $\mathbb{R}^n$ satisfying the following conditions:
	\begin{itemize}
	\item $(J_i)_{i\in I}$ is a compact cover for $J$ by segments;
	\item For every $i\in I$, $(\phi_i,U_i)$ is in the atlas of the coloured manifold $M$.
	\item $\forall i \in I:\quad X_i\in T(\phi_i(U_i))$ and $X_i$ is defined over $J_i$;
	\item \textbf{(Consistency condition)} If $i,k\in I$ such that  $ J_i\cap J_k \neq\varnothing$, then we have:
	\[(\phi_k\circ\phi_i^{-1})_*({X_i}_{|J_i\cap J_k})={X_k}_{|J_i\cap J_k}\]
	\end{itemize}
	A coloured path $(X_i,J_i,(\phi_i,U_i))_{i\in I}$ on an interval $J$ is said to be a coloured path extension for the path $x:J\rightarrow M$ if the following holds:
	\[\forall i\in I: x(J_i)\subseteq U_i \quad \textrm{and}\quad \textrm{trace}(X_i)=\phi_i \circ x_{|J_i}\]
	If such a coloured path exists, we say then that $x$ admits a coloured path extension.
	\end{Def}
	As the study of the classical examples below will show, we emphasize that the rule linking $\mathcal{C}$ and $\widetilde{\mathcal{C}}$ need be functorial in order to have sound geometric definitions and for these definitions to make sense on their own and be consistent with the definitions of coloured paths on the Euclidean space seen now as a coloured manifold.
	\begin{example}[Example \ref{ExpleContinuousManifold} continued] Take $\mathcal{C}_2$ to be the category whose objects are the sets of continuous paths with values in open subsets of $\mathbb{R}^n$. The colouring map associated to an arrow $f$ in $\mathcal{C}_1$ is a map that assigns to every continuous path $x$ the path $f \circ x$. Then our new definition of a continuous path on $M$ (as a coloured path) can be identified with the classical one which relies only on the topology on $M$: every continuous path on $M$ in the classical sense can be seen as the concatenation of pushforwards of continuous paths on the Euclidean space that are consistent among themselves.
	\end{example}
	\begin{example}[Example \ref{ExpleSmoothManifold} continued] Take $\mathcal{S}_2$ to be the category whose objects are the sets of smooth paths valued in open subsets of $\mathbb{R}^n$. The associated functorial rule from $\mathcal{S}_1$ to $\mathcal{S}_2$ is the same as above by replacing continuity with smoothness. Finally, one can see the definitions of smooth maps in the classical sense and when described as coloured paths are equivalent.
	\end{example}
	\begin{example}[Example \ref{ExpleLocLipManifold} continued] Let $1 \leq p <\gamma$. Let $\mathcal{L}_2$ be the category whose objects are sets of local geometric $p$-rough paths supported in the open subsets of $\mathbb{R}^n$ and whose arrows are the set of mappings between the rough path in these objects defined over the same interval in a continuous way (in the $p$-rough path topology). We showed in the previous sections that rough paths can be seen as coloured paths in this setting.
	\end{example}
	

\begin{thebibliography}{99}
	\bibitem{AHS} Ad\'amek, J., Herrlich, H., and Strecker, G.E. (1990) \emph{Abstract and Concrete Categories -- The Joy of Cats}. John Wiley \& Sons Inc.
	\bibitem{AGGLK} Arribas, P.I., Goodwin, G.M., Geddes, J.R., Lyons, T.J. and Saunders, K. (2018) A signature-based machine learning model for distinguishing bipolar disorder and borderline personality disorder. \emph{Transl
Psychiatry}, issue 1 volume 8 page 274.
	\bibitem{Bailleul} Bailleul, I. (2014) \emph{Rough integrators on Banach manifolds} arXiv:1403.3285.
	\bibitem{BGLY} Boedihardjo, H., Geng, X., Lyons, T.J. and Yang, D. (2016) The signature of a rough path: Uniqueness. \emph{Advances in Mathematics},720–-737.
	\bibitem{Boutaib} Boutaib, Y. (2015) \emph{On Lipschitz maps and the H\"older regularity flows}. arXiv:1510.07614 (due to be published in \emph{Rev. Roumaine Math. Pures Appli.}).
	\bibitem{CLL} Caruana, M., L\'evy T. and Lyons, T.J. (2007) \emph{Differential Equations Driven by Rough Paths}. Volume 1908 of Lecture Notes in Mathematics. Springer, Berlin.
	\bibitem{CDL} Cass, T., Driver, B.K. and Litterer, C. (2015) \emph{Constrained Rough Paths}. Proceedings of the London Mathematical Society, 111: 1471-1518. 
	\bibitem{CLLy} Cass, T., Litterer, C and Lyons, T.J. (2012) Rough paths on manifolds. \emph{Interdiscip. Math. Sci.} 12, 33--88.
	\bibitem{Chen} Chen, K. (1957) Integration of paths , geometric invariants and a generalized Baker-Hausdorff formula. \emph{Ann. of Math.} (2) 65, 163–-178.
	\bibitem{Chen2} Chen, K. (1958) Integration of paths - a faithful representation of paths by non-commutative formal power series. \emph{Trans. Am. Math. Soc.} 89, 395--407.
	\bibitem{CNO} Chevyrev, I., Nanda, V. and Oberhauser, H. (2018) Persistence paths and signature features in topological data analysis. \emph{IEEE Transactions on Pattern Analysis and Machine Intelligence}, pp. 1–1.
	\bibitem{CG} Chistyakov, V.V., Galkin, O.E. (1998): On maps of bounded $p$-variations with $p > 1$. \emph{Positivity}, 2, 19–45. 15
	\bibitem{DS} Driver, B.K. and Semko, J.S. Controlled Rough Paths on Manifolds I (2017), \emph{Rev. Mat.
Iberoam.} 33, no. 3, 85–950.
	\bibitem{FH} Friz, P.K. and Hairer, M. (2014) \emph{A Course on Rough Paths.  With an introduction to Regularity Structures}. Universitext, p. xiv+251. Springer, New York. 
	\bibitem{FV} Friz, P.K. and Victoir N.B. (2010) \emph{Multidimensional stochastic processes as rough
paths}, vol. 120 of Cambridge Studies in Advanced Mathematics. Cambridge University Press, Cambridge, 2010. Theory and applications.
	\bibitem{Gubinelli} Gubinelli, M. (2004) Controlling rough paths. \emph{J. Funct. Anal.} 216, no. 1, 86--140.
	\bibitem{Gubinelli2} Gubinelli, M. (2010) Ramification of rough paths. \emph{J. Differential Equations} 248, no. 4, 693–-721.
	\bibitem{GIP} Gubinelli, M., Imkeller, P. and Perkowski, N. (2012) \emph{Paraproducts, rough paths and controlled distributions}. arXiv:1210.2684.
	\bibitem{Graham} Graham, B. (2013) \emph{Sparse arrays of signatures for online character recognition}. arXiv:1308.0371.
	\bibitem{Hairer} Hairer, M. (2014) A theory of regularity structures. \emph{Invent. Math} 198, no. 2, 269--504.
	\bibitem{HL} Hambly, B. and Lyons, T.J. (2010) Uniqueness for the signature of a path of bounded variation and the reduced path group. \emph{Ann. of Math.} 171, no. 1, 109–-167.
	\bibitem{LQ} Le Jan, Y. and Qian, Z. \emph{Stratonovich's signatures of Brownian motion determine Brownian sample paths}. Probab. Theory Related Fields 157 (2013), no. 1-2, 209–223. 
	\bibitem{Lee} Lee, J.M. (2002) \emph{Introduction to Smooth Manifolds}. Graduate Texts in Mathematics, Springer.
	\bibitem{Lyons2} Lyons, T. (1994). Differential equations driven by rough signals. I. An extension of an inequality by L.C.Young. \emph{Mathematical Research Letters} 1, 451–464.
	\bibitem{Lyons} Lyons, T.J. (1998) Differential equations driven by rough signals. \emph{Rev. Mat. Iberoamericana} 14, no. 2, 215--310. 
	\bibitem{LyQ} Lyons, T.J. and Qian, Z. (2002) \emph{System control and rough paths}. Oxford Mathematical Monographs. Oxford University Press, Oxford Science Publications, Oxford.
	\bibitem{LV} Lyons, T.J and Nicolas Victoir, N. (2007) An extension theorem to rough paths. \emph{Ann. Inst. H. Poincaré Anal. Non Linéaire} 24, no. 5, 835--847. 
	\bibitem{Reutenauer} Reutenauer, C. \emph{Free Lie algebras}. London Mathematical Society Monographs. New Series, 7. Oxford Science Publications. The Clarendon Press, Oxford University Press, New York, 1993.
	\bibitem{Stein} Stein, E.M. (1970) \emph{Singular integrals and differentiability properties of functions}. Princeton University Press. 
	\bibitem{Young} Young, L.C. (1936) An inequality of H\"older type connected with Stieltjes integration \emph{Acta Math.} 67, 251--282.
	\end{thebibliography}
\end{document}